\documentclass[12pt]{article}
\usepackage[utf8]{inputenc}
\usepackage{amsmath}
\usepackage{amssymb}
\usepackage{amsthm}
\usepackage{commath}
\usepackage{enumitem}
\usepackage[maxnames=5]{biblatex}
\usepackage{bm}
\usepackage[margin=3cm]{geometry}
\usepackage{array} 
\usepackage{blkarray}
\usepackage{setspace}
\setstretch{1.2}
\newcolumntype{L}{>{$}l<{$}}
\addbibresource{ref.bib}
\newtheorem{theorem}{Theorem}[section]
\newtheorem{definition}[theorem]{Definition}
\newtheorem{proposition}[theorem]{Proposition}
\newtheorem{corollary}[theorem]{Corollary}

\newtheorem{lemma}[theorem]{Lemma}
\newtheorem{remark}[theorem]{Remark}
\newcommand{\Irr}{\mathrm{Irr}}
\newcommand{\IBr}{\mathrm{IBr}}
\newcommand{\Alt}{\mathrm{Alt}}
\newcommand{\Sym}{\mathrm{Sym}}
\newcommand{\PSL}{\mathrm{PSL}}
\newcommand{\PSU}{\mathrm{PSU}}
\newcommand{\SL}{\mathrm{SL}}
\newcommand{\SU}{\mathrm{SU}}
\newcommand{\GL}{\mathrm{GL}}
\newcommand{\PGL}{\mathrm{PGL}}
\newcommand{\GU}{\mathrm{GU}}
\newcommand{\Out}{\mathrm{Out}}
\newcommand{\Aut}{\mathrm{Aut}}
\usepackage{hyperref}

\title{Morita Equivalence Classes of Tame Blocks of Finite Groups}
\author{Norman Macgregor}
\date{\today}

\begin{document}

\maketitle

\begin{abstract}
\noindent We show that several Morita equivalence classes of tame algebras do not occur as blocks of finite groups. This refines classifications by Erdmann of classes of blocks with dihedral, semidihedral, and generalised quaternion defect groups. In particular we now have a complete classification of the Morita equivalence classes of blocks of finite groups with dihedral defect groups.
\end{abstract}

\tableofcontents

\section{Introduction}

Throughout we work over an algebraically closed field $k$ of characteristic $\ell>0$. A finite-dimensional algebra over $k$ has representation type one of finite, tame, or wild. It is called tame if it has infinitely many indecomposable modules but, for each $n\in\mathbb{N}$, all but finitely many of the indecomposable modules of dimension $n$ fall into finitely many one-parameter families. A block of a finite group has tame representation type if and only if its defect groups are dihedral, semidihedral, or generalised quaternion \cite{Bondarenko-Drozd}.

In a series of papers -- the details of which are collected in \cite{Erdmannbook} -- Erdmann classified several families of Morita equivalence classes of tame algebras which include all tame blocks of finite groups, though it is not known precisely which classes actually occur as blocks. We improve each of the classifications of blocks with dihedral, semidihedral, and generalised quaternion defect groups by using the Classification of Finite Simple Groups to show that several classes do not occur as blocks of any quasi-simple groups, and hence, via a reduction to quasi-simple groups, do not occur as blocks of any finite groups. 

\begin{theorem}\label{maintheorem}
There are no blocks of finite groups with the following defect groups of order $2^n$ in the following Morita equivalence classes (according to Erdmann's labellings \cite{Erdmannbook}):
\begin{itemize}
\item Dihedral: $(3B)$ for $n\geq4$;
\item Generalised quaternion: $(3B)$ for $n\geq5$;
\item Semidihedral: $(2B_4)$ and $(3H)$ for any $n$;
$(2B_1)$, $(3B_2)$, and $(3C_{2,1})$ for $n\geq5$.
\end{itemize}
In particular, the Morita equivalence classes that occur as blocks of finite groups with dihedral defect groups are known.
\end{theorem}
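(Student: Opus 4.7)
The roadmap signposted in the introduction is a two-stage argument: reduce from arbitrary finite groups to quasi-simple groups, then run through the Classification of Finite Simple Groups and check Erdmann's labels case by case. My plan follows this outline.

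\emph{Reduction to quasi-simple groups.} I would first assemble the reduction results showing that if a tame Morita equivalence class is realised by a block of some finite group, it is realised by a block of a quasi-simple group with a defect group of the same isomorphism type. The key tools are Fong--Reynolds reduction to blocks of subgroups covering a given block, a Dade/K\"ulshammer--Puig analysis of inertial quotients and twisted group algebras, and the fact that these reductions respect Morita equivalence. For tame defect groups this is comparatively clean because the defect groups are small, have restricted automorphism groups, and arise as their own Sylow $2$-subgroups in many of the natural subgroups produced by the reduction; similar reductions have been used in the work of Eaton, Kessar, Livesey and their collaborators on blocks of small defect.

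\emph{CFSG enumeration and identification of the Morita class.} Having reduced to a quasi-simple group $G$, I would enumerate all $2$-blocks of $G$ whose defect group is of the prescribed type and order. By Gorenstein--Walter (dihedral) and Alperin--Brauer--Gorenstein (semidihedral and generalised quaternion), simple groups with Sylow $2$-subgroups of tame type are very restricted: $\PSL_2(q)$ for $q$ odd, $\mathrm{A}_7$, $\PSL_3(q)$ with $q\equiv 3\pmod 4$, $\PSU_3(q)$ with $q\equiv 1\pmod 4$, $\mathrm{M}_{11}$, and a handful of other sporadics. For \emph{non-principal} blocks of tame defect in larger quasi-simple groups one uses Jordan decomposition of blocks for groups of Lie type (Brou\'e--Michel, Bonnaf\'e--Rouquier), together with standard block data for alternating groups and ATLAS/GAP data for sporadics. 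For each candidate block I would then identify its Morita class by computing the invariants distinguishing Erdmann's labels: the number of simple modules, the Cartan matrix, and, when finer data is required, the source algebra or the structure of projective indecomposables. The excluded labels $(3B)$ for dihedral, $(3B)$ for generalised quaternion, and $(2B_4)$, $(3H)$, $(2B_1)$, $(3B_2)$, $(3C_{2,1})$ for semidihedral each correspond to a specific such fingerprint, and the assertion is that no candidate block produces it (except in the small defect orders excluded by the bounds on $n$ in the statement).

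The main obstacle, I expect, is the treatment of non-principal $2$-blocks of covering groups of Lie type groups in odd defining characteristic --- particularly covers and extensions of $\PSL_3(q)$ and $\PSU_3(q)$ together with their diagonal, field and graph twists --- where the defect group may be a proper subgroup of a Sylow $2$-subgroup and correctly tracking the Brauer correspondent and Morita class through Jordan decomposition is delicate. The principal-block cases (largely covered by Erdmann's and Linckelmann's earlier analyses of $\PSL_2(q)$), the alternating and symmetric groups (via $2$-cores of partitions and their known decomposition matrices), and the sporadic groups (via the ATLAS and the GAP character-table library) should be much more mechanical, and I would dispatch them first to isolate the Lie-type cases for careful treatment.
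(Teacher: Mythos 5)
Your two-stage outline matches the paper's skeleton, but several of the steps you gloss over are precisely where the real content lies, and your description of them would not carry the argument through.

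The reduction to quasi-simple groups is not a routine Fong--Reynolds/Dade analysis. The paper's Proposition~\ref{reduction} takes a maximal normal subgroup $N\trianglelefteq G$, lets $b$ be a block of $N$ covered by $B$ with defect group $P=D\cap N$, and then uses the case analysis of Brauer and Olsson: when $l(B)=3$ there are elements of order $3$ permuting the non-trivial elements of Klein four subgroups of $D$, so all involutions in $D$ are $G$-conjugate and, because $P$ is strongly closed in $D$, one gets the dichotomy $P=1$ or $P=D$. Without this fusion argument there is no clean reduction, because in general $P$ could be an intermediate subgroup. The step $P=D\Rightarrow B\sim_{\mathrm{Mor}}b$ in turn uses Külshammer's $G[b]$ together with the Boltje--Kessar--Linckelmann solvability of $\mathcal{T}(b)$ --- again not just a citation of Fong--Reynolds. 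You also do not anticipate that the semidihedral $(2B_1)$ case genuinely escapes the quasi-simple reduction: Proposition~\ref{reduction3} only gets you to a group $S.2$ with $S$ quasi-simple and a covered block in dihedral class $(3K)$, and the whole of Section~\ref{2B1} is devoted to disposing of this residual case (including a nontrivial argument that the graph automorphism of $\GL_n$ does not stabilise the relevant block). Finally, the generalised quaternion $(3B)$ elimination is not a CFSG enumeration at all but a corollary of the dihedral case via Kessar--Linckelmann: such a block is Morita equivalent to its Brauer correspondent in $C_G(Z(D))$, pushing the question down to a dihedral block one defect smaller.

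On the identification side, your ``fingerprint'' language (Cartan matrix, source algebra) is stronger than what is actually available or used. The paper's operative observation is that for dihedral defect groups the Morita class can be read off from the \emph{ordinary character degrees} of the block (end of Section~\ref{preliminaries}), which is what makes the Lie-type and sporadic checks feasible; for semidihedral groups this fails in general, which is exactly why some ambiguities such as $(3B_1)$ vs $(3D)$, and the Monster block in $(3B_2)$ vs $(3C_{2,1})$, are left unresolved in Theorem~\ref{semidihedral}. Your plan to compute source algebras or Ext-quivers to separate these labels would not terminate for the large Lie-type blocks; instead the paper reduces to quasi-isolated blocks via Bonnaf\'e--Rouquier, uses Kessar--Malle's tables for exceptional groups at bad primes, and reads degrees through Jordan decomposition. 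You should make these the backbone of your enumeration rather than an ATLAS-plus-invariants sweep.
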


Donovan's conjecture states that for any $\ell$-group $D$ there are, up to Morita equivalence, only finitely many blocks of finite groups with $D$ as a defect group; this is known to be true for many families of groups. Erdmann's classifications already proved this for dihedral and semidihedral groups, however the Morita equivalence classes were not fully and precisely classified, as we now have for all dihedral groups. This is perhaps the first `interesting' family of non-abelian groups for which this has been done; interesting in that there are multiple classes and they include blocks of finite simple groups.

We give updated classifications of the remaining classes of blocks that may occur in Section \ref{classifications}, together with their decomposition matrices and those of the eliminated classes. Then, after recalling various background results in Section \ref{preliminaries}, we prove Theorem \ref{maintheorem} by first proving reductions to quasi-simple groups in Section \ref{reductions} -- in fact it is sufficient to consider only the simple groups and their odd covers -- then in Section \ref{proof} we work through the blocks of all these groups case by case, describing the blocks that (might) have dihedral or semidihedral defect groups and showing that the Morita equivalence classes in question do not appear, and thus do not occur as blocks of any finite group. This is done primarily by deducing the decomposition matrices from the ordinary character degrees, using descriptions of blocks from various papers. The generalised quaternion case is obtained as a corollary of the dihedral case. For the semidihedral class $(2B_1)$, however, quasi-simple groups are not quite sufficient, and this case is considered finally in Section \ref{2B1}.

\section{The Classifications}\label{classifications}

We refine Erdmann's classifications of blocks with dihedral, semidihedral, and generalised quaternion defect groups, each of which can be found together in her book \cite{Erdmannbook}; additionally see \cite[Section 6.2]{Craven} for a succinct description of the previously most up-to-date versions and example blocks. We obtain the following now complete classification of Morita equivalence classes of blocks with dihedral defect groups:

\begin{theorem}\label{classification}
If $B$ is a block of a finite group with dihedral defect groups of order $2^n$ for $n\geq3$, then exactly one of the following holds: 
\begin{enumerate}
\item[$(1A)$] $B$ is Morita equivalent to $kD_{2^n}$;
\item[$(2A)$] $B$ is Morita equivalent to the principal block of $\PGL_2(q)$, for $q\equiv1\bmod4$ where $\abs{q-1}_2=2^{n-1}$;
\item[$(2B)$] $B$ is Morita equivalent to the principal block of $\PGL_2(q)$, for $q\equiv-1\bmod4$ where $\abs{q+1}_2=2^{n-1}$;
\item[$(3A)$] $B$ is Morita equivalent to the principal block of $\PSL_2(q)$, for $q\equiv1\bmod4$ where $\abs{q-1}_2=2^n$;
\item[$(3K)$] $B$ is Morita equivalent to the principal block of $\PSL_2(q)$, for $q\equiv-1\bmod4$ where $\abs{q+1}_2=2^n$;
\item[$(3B)$] $n=3$ and $B$ is Morita equivalent to the principal block of $\Alt(7)$.
\end{enumerate}
The decomposition matrix of $B$ is then accordingly one of the following, in order; in each case the last row is repeated $2^{n-2}-1$ times: 
\begin{equation*}
\begin{pmatrix}
1\\1\\1\\1\\2
\end{pmatrix},
\qquad
\begin{pmatrix}
1&.\\1&.\\1&1\\1&1\\2&1
\end{pmatrix},
\qquad
\begin{pmatrix}
1&.\\1&.\\1&1\\1&1\\.&1
\end{pmatrix},
\qquad
\begin{pmatrix}
1&.&.\\1&1&.\\1&.&1\\1&1&1\\2&1&1
\end{pmatrix},
\qquad
\begin{pmatrix}
1&.&.\\.&1&.\\.&.&1\\1&1&1\\.&1&1
\end{pmatrix},
\qquad
\begin{pmatrix}
1&.&.\\1&1&.\\1&.&1\\1&1&1\\.&1&.
\end{pmatrix}.
\end{equation*}
\end{theorem}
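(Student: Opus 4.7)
The plan is to derive this classification as a corollary of Theorem \ref{maintheorem} combined with Erdmann's prior classification \cite{Erdmannbook}. Erdmann's work already reduces the Morita equivalence classes of tame blocks with dihedral defect groups of order $2^n$ to a finite explicit list, and simultaneously determines their decomposition matrices up to repetition of rows. The candidates fall naturally into three families: nilpotent $(1A)$; two-simples $(2A)$ and $(2B)$; and three-simples $(3A), (3K), (3B)$. Every class except $(3B)$ for $n \geq 4$ is already known to arise as a block of a finite group, precisely via the examples listed in the statement, so the only new input needed to obtain a complete classification is the elimination of $(3B)$ for $n \geq 4$, which is exactly part of Theorem \ref{maintheorem}.

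I would carry out the steps as follows. First, recall from \cite{Erdmannbook} the candidate Morita equivalence classes for dihedral defect groups and their decomposition matrices; this is what yields the six matrices displayed in the statement. Second, realize each of $(1A), (2A), (2B), (3A), (3K)$ by the stated blocks: $(1A)$ by $kD_{2^n}$ itself; $(2A)$ and $(2B)$ by the principal blocks of $\PGL_2(q)$ for the two congruences on $q$; and $(3A), (3K)$ similarly by principal blocks of $\PSL_2(q)$. The identification with Erdmann's algebras in each case reduces to reading off the decomposition matrix from the classical character tables, together with a standard computation of the $2$-part of $|\PGL_2(q)|$ or $|\PSL_2(q)|$ to fix the defect group order. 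Third, observe that the principal block of $\Alt(7)$ realizes $(3B)$ at $n=3$. Fourth, invoke Theorem \ref{maintheorem} to rule out $(3B)$ for $n\geq 4$. Finally, note that the six classes are pairwise Morita-inequivalent since their decomposition (equivalently, Cartan) matrices differ.

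The main obstacle is entirely packaged in Theorem \ref{maintheorem}; given it, the present statement is a straightforward repackaging of Erdmann's classification together with long-known example blocks. The only mild subtlety is to match the congruence conditions on $q$ in classes $(2A), (2B), (3A), (3K)$ to the prescribed defect group order $2^n$, which follows from the standard formulas for $|\PGL_2(q)|_2$ and $|\PSL_2(q)|_2$.
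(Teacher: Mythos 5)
Your overall strategy — take Erdmann's list, exhibit realizing blocks for the surviving classes, and invoke Theorem~\ref{maintheorem} to kill $(3B)$ for $n\geq 4$ — is indeed the same approach the paper takes. But there is a genuine gap in the step where you claim that Erdmann's classification ``already reduces the Morita equivalence classes\dots to a finite explicit list'' with ``the only new input needed\dots the elimination of $(3B)$ for $n\geq 4$.'' This is not quite right for the classes with two simple modules. In Erdmann's original classification, the algebras corresponding to $(2A)$ and $(2B)$ come with a scalar parameter in the relations of the quiver algebra, yielding \emph{two} possible Morita equivalence classes for each of those decomposition matrices, not one. Only one of the two parameter values per matrix is realized by a block, and the other was ruled out by Eisele \cite{Eisele}, not by Erdmann and not by Theorem~\ref{maintheorem}. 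Without citing Eisele's result (or reproving it), you cannot conclude that the principal block of $\PGL_2(q)$ is the \emph{unique} Morita equivalence class with the $(2A)$ (resp.\ $(2B)$) decomposition matrix; your argument would only show it is one of two candidates.

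So the fix is minimal: insert Eisele's elimination of the extraneous scalar-parameter classes for the two-simple-module cases alongside the elimination of $(3B)$ for $n\geq 4$. With that in place, your argument matches the paper's.
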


Note the `.' denote zero entries, and $\abs{m}_p$ denotes the largest power of a prime $p$ dividing an integer $m$. The class we eliminate is $(3B)$ for $n\geq4$. In Erdmann's original classification there was also a parameter resulting in two possible classes for each decomposition matrix with two simple modules, but these additional classes were eliminated by Eisele in \cite{Eisele}. Now there is an example of a block occurring in each possible class and each possible $n$, so we have a complete classification.

Throughout when we say dihedral we usually refer to groups of order at least $8$, though blocks with Klein four defect groups are also tame. In that case only classes $(1A)$, $(3A)$, and $(3K)$ in Theorem \ref{classification} occur, as the principal blocks of $V_4$, $\Alt(5)\cong \PSL_2(5)$, and $\Alt(4)\cong \PSL_2(3)$ respectively \cite{Erdmannbook}; note that since $n=2$ the decomposition matrices have only four rows.

Theorem \ref{classification} implies a similar improvement on Erdmann's classification of blocks with generalised quaternion defect groups, completely classifying those with three simple modules by eliminating the corresponding class (3B) for $n\geq5$:

\begin{corollary}\label{quaternion}
If $B$ is a block of a finite group with generalised quaternion defect groups of order $2^n$ for $n\geq3$ and $l(B)=3$, then exactly one of the following holds: 

\begin{enumerate}
\item[$(3A)$] $B$ is Morita equivalent to the principal block of $\SL_2(q)$, for $q\equiv1\bmod4$ where $\abs{q-1}_2=2^{n-1}$;
\item[$(3K)$] $B$ is Morita equivalent to the principal block of $\SL_2(q)$, for $q\equiv-1\bmod4$ where $\abs{q+1}_2=2^{n-1}$;
\item[$(3B)$] $n=4$ and $B$ is Morita equivalent to the principal block of $2\cdot\Alt(7)$.
\end{enumerate}
The decomposition matrix of $B$ is then accordingly one of the following; in each case the last row is repeated $2^{n-2}-1$ times:
\begin{equation*}
\begin{pmatrix}
1&.&.\\1&1&.\\1&.&1\\1&1&1\\.&1&.\\.&.&1\\2&1&1
\end{pmatrix},
\qquad
\begin{pmatrix}
1&.&.\\.&1&.\\.&.&1\\1&1&1\\1&1&.\\1&.&1\\.&1&1
\end{pmatrix},
\qquad
\begin{pmatrix}
1&.&.\\1&1&.\\1&.&1\\1&1&1\\2&1&1\\.&.&1\\.&1&.
\end{pmatrix}.
\end{equation*}
\end{corollary}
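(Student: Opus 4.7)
The plan is to deduce Corollary \ref{quaternion} directly from Theorem \ref{classification}. Erdmann's classification \cite{Erdmannbook} already establishes that any block $B$ of a finite group with generalised quaternion defect group of order $2^n$ and $l(B)=3$ falls into exactly one of three Morita equivalence classes $(3A)$, $(3B)$, $(3K)$, and that classes $(3A)$, $(3K)$ are realised by the principal blocks of $\SL_2(q)$ as stated, while $(3B)$ at $n=4$ is realised by the principal block of $2\cdot\Alt(7)$. The only thing to prove is thus the non-existence of class $(3B)$ for $n\geq 5$, which I would obtain from the non-existence of the dihedral class $(3B)$ for $n\geq 4$ (Theorem \ref{classification}) via the central-quotient correspondence between quaternion and dihedral blocks.

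Concretely, I would re-run the same reduction to quasi-simple groups as in Section \ref{reductions}, which is insensitive to the precise isomorphism type of the defect group, to land in the situation that $G$ is quasi-simple. By the Brauer--Suzuki theorem no non-abelian simple group has a generalised quaternion Sylow $2$-subgroup, and the CFSG-based analysis used for the dihedral case shows that a quasi-simple witness must in fact be a proper central extension $G=2\cdot S$ with $S$ simple. The central involution $z\in Z(G)$ then lies in $O_2(G)$ and hence in every $2$-defect group $D$, and $D/\langle z\rangle\cong D_{2^{n-1}}$ is dihedral. Passing to $\bar G=G/\langle z\rangle=S$, the block $B$ descends to a block $\bar B$ of $S$ with defect group $D_{2^{n-1}}$ and $l(\bar B)=3$, which by Erdmann's explicit description of the basic algebras lies in the correspondingly labelled dihedral class $(3\xi)$.

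Applying Theorem \ref{classification} to $\bar B$ then leaves only the cases $\bar B$ principal in $\PSL_2(q)$ (classes $(3A)$, $(3K)$, with $|q\mp 1|_2=2^{n-1}$, matching the arithmetic in Corollary \ref{quaternion}) or $\bar B$ principal in $\Alt(7)$ (class $(3B)$, only for $n-1=3$). In each case $G$ is forced to be the unique quasi-simple double cover, namely $\SL_2(q)$ or $2\cdot\Alt(7)$, and $B$ is the principal block; in particular quaternion class $(3B)$ can only occur at $n=4$. The main obstacle is the class-label correspondence $(3\xi)_{\text{quat}}\leftrightarrow(3\xi)_{\text{dih}}$ under the quotient by $\langle z\rangle$, which is a feature of Erdmann's explicit presentation of the basic algebras and can be cross-checked by comparing the decomposition matrices displayed in Corollary \ref{quaternion} against those in Theorem \ref{classification}.
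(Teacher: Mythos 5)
Your approach differs fundamentally from the paper's, and it has a genuine gap. The paper does \emph{not} run a reduction to quasi-simple groups for the quaternion case. Instead it invokes the result of Kessar and Linckelmann \cite{Kessar-Linckelmann} that any block $B$ with generalised quaternion defect group $D$ and $l(B)=3$ is Morita equivalent to its Brauer correspondent in $C_G(Z(D))$, and then passes to $C_G(Z(D))/Z(D)$, where the corresponding block has dihedral defect group $D/Z(D)$. This works for \emph{arbitrary} finite groups and immediately puts one in the setting of Theorem~\ref{classification}, with no need to locate a quasi-simple witness.

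The gap in your version is the assertion that the reduction in Section~\ref{reductions} ``is insensitive to the precise isomorphism type of the defect group.'' It is not. The proof of Proposition~\ref{reduction} hinges on the existence of Klein four subgroups (dihedral case) and on Klein four and cyclic subgroups of order~$4$ (semidihedral case), and on the strong-closure argument they support to force $P=1$ or $P=D$ for the defect group $P$ of a covered block of a maximal normal subgroup. A generalised quaternion group has a \emph{unique} involution and no Klein four subgroup, so those arguments collapse: strong closure of $P$ in $D$ tells you nothing useful about the non-central elements, and you cannot conclude $P\in\{1,D\}$. You would have to supply a fresh reduction argument for the quaternion case, and nothing in the paper provides one. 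Your subsequent steps (Brauer--Suzuki forcing $G=2\cdot S$, locating $z$ in every defect group, matching labels $(3\xi)_{\text{quat}}\leftrightarrow(3\xi)_{\text{dih}}$) are reasonable, but they sit downstream of a reduction that has not been established. The Kessar--Linckelmann result is exactly the tool that lets one bypass this; without it, the corollary does not follow from Theorem~\ref{classification} by the route you describe.
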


This follows from a result of Kessar and Linckelmann \cite{Kessar-Linckelmann}: a block $B$ of a finite group $G$ with generalised quaternion defect group $D$ and $l(B)=3$ must be Morita equivalent to its Brauer correspondent in $C_G(Z(D))$. This implies that $C_G(Z(D))/Z(D)$ has a block $b$ contained in $B$ with defect group $D/Z(D)$, which is dihedral. Thus if $B$ were in class (3B) with defect $n\geq5$, then $b$ would have to be in dihedral class (3B) with defect $n-1\geq4$, which is impossible by Theorem \ref{classification}.

The decomposition matrices for blocks with generalised quaternion defect groups and $l(B)\leq2$ are as follows:

\begin{equation*}
\begin{pmatrix}
1\\1\\1\\1\\2
\end{pmatrix},
\qquad
\begin{pmatrix}
1&.\\1&.\\1&1\\1&1\\.&1\\2&1
\end{pmatrix},
\qquad
\begin{pmatrix}
1&.\\1&.\\1&1\\1&1\\2&1\\.&1
\end{pmatrix}.
\end{equation*}

These are labelled $(1A)$, $(2A)$, and $(2B)$, and occur as $kQ_{2^n}$ and, for $n\geq4$, as the principal blocks of $\SL_2(q).2$ for $q\equiv1$ or $-1\bmod4$ respectively; note that these were mistakenly listed as $\SL_2(q^2).2$ in the original version of \cite{Craven}. While we have an example of a block with each possible decomposition matrix for generalised quaternion defect groups, for each matrix with $l(B)=2$ there is a parameter giving infinitely many possible Morita equivalence classes, so we do not have a complete classification and even Donovan's conjecture is unknown.

We rule out several possible Morita equivalence classes of blocks with semidihedral defect groups, though in this case we have a less complete result.

\begin{theorem}\label{semidihedral}
If $B$ is a block of a finite group with semidihedral defect groups of order $2^n$ for $n\geq4$ and $l(B)=3$, then exactly one of the following holds: 
\begin{enumerate}
\item[$(3B_1)$] $B$ is Morita equivalent to the principal block of $\PSL_3(q)$, for $q\equiv-1\bmod4$ where $\abs{q+1}_2=2^{n-2}$;
\item[$(3D)$] $B$ has the same decomposition matrix as in $(3B_1)$, but with a different Ext-quiver; no example of such a block is known;
\item[$(3A_1)$] $B$ is Morita equivalent to the principal block of $\PSU_3(q)$, for $q\equiv1\bmod4$ where $\abs{q-1}_2=2^{n-2}$;
\item[$(3C_{2,2})$] no example of a block in the Morita equivalence class of $B$ is known;
\item [$(*)$] $n=4$ and $B$ is Morita equivalent to a certain non-principal block of the Monster group.
\end{enumerate}
The block $(*)$ of the Monster is either in class $(3B_2)$ or $(3C_{2,1})$, and thus for $n=4$ there are blocks in one of these classes but not both, and there are no blocks in either class for $n\geq5$. The decomposition matrix of $B$ is then one of the following, with the last row repeated $2^{n-2}-1$ times; from left to right: $(3B_1)$ or $(3D)$, $(3A_1)$, $(3C_{2,2})$, $(3B_2)$, $(3C_{2,1})$: 
\begin{equation*}
\begin{pmatrix}
1&.&.\\1&1&.\\1&.&1\\1&1&1\\.&.&1\\.&1&.
\end{pmatrix},
\quad
\begin{pmatrix}
1&.&.\\1&1&.\\1&.&1\\1&1&1\\.&.&1\\2&1&1
\end{pmatrix},
\quad
\begin{pmatrix}
1&.&.\\.&1&.\\1&.&1\\.&1&1\\.&.&1\\1&1&1
\end{pmatrix},
\quad
\begin{pmatrix}
1&.&.\\1&1&.\\1&.&1\\1&1&1\\2&1&1\\.&1&.
\end{pmatrix},
\quad
\begin{pmatrix}
1&.&.\\.&1&.\\1&.&1\\.&1&1\\1&1&1\\.&.&1
\end{pmatrix}.
\end{equation*}
\end{theorem}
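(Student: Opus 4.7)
The strategy is to combine Erdmann's classification of tame algebras with semidihedral defect of order $2^n$ and three simple modules (as compiled in \cite{Erdmannbook}) with the eliminations supplied by Theorem \ref{maintheorem}. Erdmann's list gives a finite collection of candidate Morita equivalence classes together with their decomposition matrices; for $l(B)=3$ the candidates are precisely the labels in the statement plus $(3H)$, with $(3B_1)$ and $(3D)$ sharing a decomposition matrix and differing only in their Ext-quiver, while each of the remaining classes has its own distinct matrix (the repetition pattern of the last row being $2^{n-2}-1$ extra copies, consistent with $k(B)=2^{n-2}+5$).

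Given this list, Theorem \ref{maintheorem} immediately eliminates $(3H)$ for every $n\geq 4$ and $(3B_2), (3C_{2,1})$ for every $n\geq 5$. The surviving candidates for $n\geq 5$ are thus $(3A_1), (3B_1), (3C_{2,2}), (3D)$; for $n=4$ the classes $(3B_2)$ and $(3C_{2,1})$ must also be kept on the list. I would confirm the claimed realisations by standard computations with $\PSL_3(q)$ and $\PSU_3(q)$: for each of the principal $2$-blocks in question, one reads the decomposition matrix off the known ordinary character table, matches it against Erdmann's matrix for $(3B_1)$ respectively $(3A_1)$, and invokes existing block-theoretic descriptions of these principal blocks to upgrade the matching from decomposition-matrix level to Morita level. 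The classes $(3D)$ and $(3C_{2,2})$ survive as theoretical possibilities with no known realisation, exactly as in Erdmann's classification.

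For the case $(*)$ at $n=4$, I would consult the available $2$-modular data for the Monster (Modular Atlas and the existing literature on its blocks) to identify the unique non-principal $2$-block with semidihedral defect of order $16$ and $l(B)=3$. The decomposition matrix of this block must match one of the two displayed for $(3B_2)$ and $(3C_{2,1})$, but without sufficient further information the choice cannot presently be made; this is exactly the ambiguity recorded in the statement. Combined with the elimination of $(3B_2)$ and $(3C_{2,1})$ for $n\geq 5$ from Theorem \ref{maintheorem}, this gives the stated dichotomy: at $n=4$ exactly one of $(3B_2), (3C_{2,1})$ is realised (by the Monster block) and for $n\geq 5$ neither is.

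The main obstacle is Theorem \ref{maintheorem} itself: the reduction to quasi-simple groups and the case-by-case analysis of simple groups and their odd covers carried out in later sections of the paper do the substantive work. Once that input is in place, the present theorem reduces to a bookkeeping exercise against Erdmann's list, with the only genuinely case-specific step being the identification of the Monster block and the residual ambiguity in placing it in $(3B_2)$ or $(3C_{2,1})$.
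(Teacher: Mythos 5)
Your proposal is correct and follows essentially the same route as the paper: Theorem \ref{semidihedral} is presented in the paper as a refinement of Erdmann's classification obtained by crossing out the classes eliminated in Theorem \ref{maintheorem}, and the paper likewise defers all substantive work to the reduction to quasi-simple groups (Proposition \ref{reduction}) and the case-by-case analysis in Section \ref{proof}. One small point of rigour is worth making explicit: the claim that at $n=4$ there are blocks in exactly one of $(3B_2)$ and $(3C_{2,1})$ (rather than merely at least one) does not follow just from exhibiting the Monster block and noting its decomposition matrix. It also requires the reduction to quasi-simple groups together with the exhaustive check in Section \ref{proof} that no quasi-simple group other than the Monster has a semidihedral block of defect $16$ in class $(3B_2)$ or $(3C_{2,1})$; only then can one conclude that any finite group block in either class at $n=4$ is Morita equivalent to the Monster block, which lies in precisely one of the two. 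You gesture at this (``the reduction to quasi-simple groups and the case-by-case analysis \dots do the substantive work''), but the connection to the ``not both'' conclusion should be spelled out.
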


Note that \cite{Erdmannbook} incorrectly lists the principal block of $M_{11}$ in class $(3D)$, but it is in $(3B_1)$; this can be checked in Magma. The classes eliminated are $(3B_2)$ and $(3C_{2,1})$ for $n\geq5$, and $(3H)$ for all $n$ which has the following decomposition matrix:
\begin{equation*}
\begin{pmatrix}
1&.&.\\.&1&.\\.&.&1\\1&1&1\\.&1&1\\1&1&.
\end{pmatrix}.
\end{equation*}

Here, unlike the dihedral case, the Morita equivalence class cannot always be identified based only on the degrees of the ordinary characters in the block. The same list of ordinary degrees could give rise to the decomposition matrices of both $(3A_1)$ and $(3C_{2,2})$, and similarly for $(3B_2)$ and $(3C_{2,1})$, while $(3B_1)$ and $(3D)$ even have identical decomposition matrices.

\begin{theorem}\label{semidihedral2}
If $B$ is a block of a finite group with semidihedral defect groups of order $2^n$ for $n\geq4$ and $l(B)\leq2$, then exactly one of the following holds: 
\begin{enumerate}
\item[$(1A)$] $B$ is Morita equivalent to $kSD_{2^n}$;
\item[$(2A_1)$] $B$ is Morita equivalent to the principal block of $\GU_2(q)$, for $q\equiv1\bmod4$ where $\abs{q-1}_2=2^{n-2}$;
\item[$(2B_2)$] $B$ is Morita equivalent to the principal block of $\GL_2(q)$, for $q\equiv-1\bmod4$ where $\abs{q+1}_2=2^{n-2}$;
\item[$(2A_2)$] $B$ is Morita equivalent to the principal block of $\PSL_2(q^2).2$, for $q$ odd where $\abs{q^2-1}_2=2^{n-1}$;
\item[$(2B_1)$] $n=4$ and $B$ is Morita equivalent to a non-principal block of $3\cdot M_{10}$;
\item[$(*)$] $B$ is Morita equivalent to an algebra with the same decomposition matrix and Ext-quiver as one of the four previous two-module classes, but with a different parameter in the relations for the path algebra; no example of such a block is known.
\end{enumerate}
The decomposition matrix of $B$ is then accordingly one of the following, with the last row repeated $2^{n-2}-1$ times: 
\begin{equation*}
\begin{pmatrix}
1\\1\\1\\1\\2
\end{pmatrix},
\qquad
\begin{pmatrix}
1&.\\1&.\\1&1\\1&1\\.&1\\2&1
\end{pmatrix},
\qquad
\begin{pmatrix}
1&.\\1&.\\1&1\\1&1\\2&1\\.&1
\end{pmatrix},
\qquad
\begin{pmatrix}
1&.\\1&.\\1&1\\1&1\\2&1
\end{pmatrix},
\qquad
\begin{pmatrix}
1&.\\1&.\\1&1\\1&1\\.&1
\end{pmatrix}.
\end{equation*}
\end{theorem}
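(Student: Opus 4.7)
\medskip
\noindent\textbf{Proof proposal.} The plan is to derive this classification directly from Erdmann's original list of candidate classes, refined first by Eisele's parameter restriction and then by Theorem~\ref{maintheorem}.

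I would begin by recalling from \cite{Erdmannbook} the list of candidate Morita equivalence classes of tame algebras with two simple modules in the semidihedral setting: the local class $(1A)$ together with the non-local families $(2A_1)$, $(2A_2)$, $(2B_1)$, $(2B_2)$, and $(2B_4)$. Each non-local family is parametrised by a scalar in $k$ appearing in the relations of the basic algebra, potentially giving infinitely many Morita equivalence classes per family. By Eisele's result in \cite{Eisele}, if a block of a finite group is Morita equivalent to an algebra in one of these families then the parameter is forced to take one of at most two specific values: a ``canonical'' value realised by a known block, and possibly a single exceptional one. The exceptional values give exactly the catch-all class $(*)$ of the statement, and account for the ``no example of such a block is known'' clause there.

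Next, I would invoke Theorem~\ref{maintheorem}, which eliminates $(2B_4)$ for all $n$ and $(2B_1)$ for $n\geq 5$. Combined with Eisele's restriction, this forces any block with semidihedral defect group of order $2^n\geq 16$ and $l(B)\leq 2$ into one of $(1A)$, $(2A_1)$, $(2B_2)$, $(2A_2)$, or (only when $n=4$) $(2B_1)$, or into one of the corresponding $(*)$ classes. The decomposition matrices then follow immediately from Erdmann's tables.

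It remains to exhibit the representatives listed. The group algebra $kSD_{2^n}$ is itself in class $(1A)$. For $(2A_1)$, $(2B_2)$ and $(2A_2)$ I would identify the principal blocks of $\GU_2(q)$, $\GL_2(q)$ and $\PSL_2(q^2).2$ (for the specified congruence class of $q$) by reading off their ordinary character degrees and the Brauer-tree structure of their Brauer correspondents; the arithmetic conditions on $|q\pm 1|_2$ are exactly what ensures the defect group has the correct order $2^n$. For $(2B_1)$ with $n=4$ I would exhibit a non-principal $2$-block of $3\cdot M_{10}$ and verify its basic algebra in Magma. The main obstacle I anticipate is this last step, since $(2B_1)$ and its $(*)$ counterpart share both decomposition matrix and Ext-quiver; distinguishing them requires computing the relations of the basic algebra of the $3\cdot M_{10}$ block explicitly, not just its numerical invariants.
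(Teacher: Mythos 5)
Your overall strategy matches what the paper does implicitly: Theorem~\ref{semidihedral2} is essentially Erdmann's catalogue of semidihedral two-module algebras with the classes $(2B_4)$ (all $n$) and $(2B_1)$ ($n\geq 5$) struck out by Theorem~\ref{maintheorem}, together with example blocks realising each surviving decomposition matrix. That architecture is correct.

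However, your invocation of Eisele is misplaced. You claim that Erdmann's relations ``potentially giv[e] infinitely many Morita equivalence classes per family'' and that Eisele's theorem is what cuts this down to at most two values. Neither half of this is what the paper asserts. Erdmann's classification of \emph{blocks} already restricts the scalar to two possibilities for each two-module semidihedral decomposition matrix (the situation with a genuinely continuous family of classes is the generalised quaternion case, where the paper explicitly notes Donovan's conjecture remains open). Eisele's contribution, as cited in the paper, is to \emph{eliminate} the second scalar value in the \emph{dihedral} two-module families, completing Theorem~\ref{classification}. For semidihedral families that second value is not eliminated: that is precisely why the $(*)$ clause is present in the statement with the remark ``no example of such a block is known.'' If Eisele applied here the way you describe, the $(*)$ classes would already be resolved. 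So you should replace the Eisele step by a direct appeal to Erdmann's classification for the fact that each two-module semidihedral decomposition matrix has exactly two candidate Morita classes; then Theorem~\ref{maintheorem} removes $(2B_4)$ entirely and $(2B_1)$ for $n\geq5$, and the second parameter value in each surviving family is recorded as $(*)$.

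One smaller point: you speak of reading off ``the Brauer-tree structure of their Brauer correspondents'' for the $\GU_2(q)$, $\GL_2(q)$, $\PSL_2(q^2).2$ examples, but these blocks have non-cyclic (semidihedral) defect, so there is no Brauer tree; what one actually uses is the decomposition matrices (as computed in Proposition~\ref{gu2}, via \cite{James}, etc.) matched against the five displayed matrices and the arithmetic of $|q\pm1|_2$. Your observation that distinguishing the $3\cdot M_{10}$ block from its $(*)$ partner requires more than numerical invariants is correct and is an honest caveat; the paper records this block as $(2B_1)$ without printing the basic-algebra verification.
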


For $l(B)\leq2$ we have an example of a block for each possible decomposition matrix, but each with two simple modules has two possible Morita equivalence classes. The classes eliminated are (both families of) $(2B_1)$ for $n\geq5$, and $(2B_4)$ for all $n$ which has the following decomposition matrix:
\begin{equation*}
\qquad
\begin{pmatrix}
1&.\\1&.\\.&1\\.&1\\1&1
\end{pmatrix}.
\end{equation*}

\section{Preliminaries}\label{preliminaries}
\subsection{Algebraic Groups}

Throughout this section let $\mathbf{G}$ be a connected reductive algebraic group in characteristic $p\neq\ell$, with a Steinberg endomorphism $F:\mathbf{G}\rightarrow\mathbf{G}$ so that $\mathbf{G}^F$ is a finite group of Lie type, and let $\mathbf{G}^*$ be a group dual to $\mathbf{G}$ with Steinberg endomorphism also denoted by $F$.

For $\mathbf{L}$ an $F$-stable Levi subgroup of $\mathbf{G}$, Deligne-Lusztig induction is a functor from virtual characters of $\mathbf{L}^F$ to those of $\mathbf{G}^F$ generalising Harish-Chandra induction (details can be found in \cite{Digne-Michel} for example). For ease of notation we will denote this functor by $R_\mathbf{L}^\mathbf{G}$, instead of $R_{\mathbf{L}^F}^{\mathbf{G}^F}$. If $(\mathbf{L},\lambda)$ is a cuspidal pair (as in Theorem \ref{Kessar-Malle}) then the set of constituents of $R_\mathbf{L}^\mathbf{G}(\lambda)$ will be called a Harish-Chandra series.

Let $\mathbf{T},\mathbf{S}$ be $F$-stable maximal tori of $\mathbf{G}$. If $\theta\in\Irr(\mathbf{T}^F)$ and $\psi\in\Irr(\mathbf{S}^F)$ correspond to $t\in(\mathbf{T}^*)^{F}$ and $s\in(\mathbf{S}^*)^{F}$, via suitable isomorphisms \cite[(8.14)]{Cabanes-Enguehard}, then the pairs $(\mathbf{T},\theta)$ and $(\mathbf{S},\psi)$ are said to be rationally conjugate whenever $t$ and $s$ are $(\mathbf{G}^*)^{F}$-conjugate; thus rational conjugacy classes of pairs $(\mathbf{T},\theta)$ correspond to conjugacy classes of semisimple elements in $(\mathbf{G}^*)^F$.

\begin{definition}
Let $s\in(\mathbf{G}^*)^F$ be semisimple. The rational Lusztig series $\mathcal{E}(\mathbf{G}^F,s)$ associated to the $(\mathbf{G}^*)^F$-conjugacy class of $s$ is the set of irreducible characters of $\mathbf{G}^F$ that occur as a constituent in some $R_\mathbf{T}^\mathbf{G}(\theta)$, where $(\mathbf{T},\theta)$ is in the rational conjugacy class associated to (the conjugacy class of) $s$.

If $s$ is a semisimple $\ell^\prime$-element of $(\mathbf{G}^*)^F$ define 
\begin{equation*}
\mathcal{E}_\ell(\mathbf{G}^F,s)=\bigcup_{t\in(C_{\mathbf{G}^*}(s)^F)_\ell}\mathcal{E}(\mathbf{G}^F,st),
\end{equation*}
so $t$ runs over all $\ell$-elements of $(\mathbf{G}^*)^F$ that commute with $s$.
\end{definition}

These series describe the blocks of $\mathbf{G}^F$ in the following way, where here a block $B$ is thought of as the set $\Irr(B)$: 

\begin{theorem}[{\cite[Thms 8.24 \& 9.12]{Cabanes-Enguehard}}]
The sets $\mathcal{E}(\mathbf{G}^F,s)$ for semisimple $s\in(\mathbf{G}^*)^F$ form a partition of $\Irr(\mathbf{G}^F)$. If $s\in(\mathbf{G}^*)^F$ is a semisimple $\ell^\prime$-element then:
\begin{enumerate}[label=(\roman*)]
\item $\mathcal{E}_\ell(\mathbf{G}^F,s)$ is a union of $\ell$-blocks of $\mathbf{G}^F$;
\item each $\ell$-block contained in $\mathcal{E}_\ell(\mathbf{G}^F,s)$ contains an element of $\mathcal{E}(\mathbf{G}^F,s)$.
\end{enumerate}
\end{theorem}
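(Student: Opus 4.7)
The plan is to establish the partition statement first by character-theoretic orthogonality, then handle the two block-theoretic parts separately, following the Broué--Michel approach as presented in Cabanes--Enguehard.

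For the partition, I would begin with two classical ingredients. First, every $\chi\in\Irr(\mathbf{G}^F)$ appears as a constituent of some Deligne-Lusztig character $R_\mathbf{T}^\mathbf{G}(\theta)$: the regular character of $\mathbf{G}^F$ is a $\mathbb{Q}$-linear combination of $R_\mathbf{T}^\mathbf{G}(\mathrm{reg}_{\mathbf{T}^F})$ as $\mathbf{T}$ runs over $\mathbf{G}^F$-classes of $F$-stable maximal tori (the uniform-function formula), which forces every irreducible to lie in some such $R_\mathbf{T}^\mathbf{G}(\theta)$. Second, the inner-product formula for Deligne-Lusztig characters gives $\langle R_\mathbf{T}^\mathbf{G}(\theta), R_\mathbf{S}^\mathbf{G}(\psi)\rangle = 0$ whenever $(\mathbf{T},\theta)$ and $(\mathbf{S},\psi)$ are not rationally conjugate, which by the duality is equivalent to distinctness of the associated semisimple classes in $(\mathbf{G}^*)^F$. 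Combined, these show the $\mathcal{E}(\mathbf{G}^F,s)$ partition $\Irr(\mathbf{G}^F)$ as $s$ ranges over semisimple classes.

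For part (i), I would invoke the standard criterion that two ordinary characters lie in the same $\ell$-block iff their central characters agree on every class sum modulo a maximal ideal of algebraic integers over $\ell$. Using the explicit character formula for $R_\mathbf{T}^\mathbf{G}(\theta)$, the central character $\omega_\chi$ of any $\chi\in\mathcal{E}(\mathbf{G}^F,st)$ on a semisimple class sum depends only on the dual element $st$. Since $t$ is an $\ell$-element commuting with $s$, its contribution involves only roots of unity of $\ell$-power order, which reduce to $1$ modulo the fixed prime over $\ell$. Thus the central characters of all characters in $\mathcal{E}_\ell(\mathbf{G}^F,s)$ agree modulo $\ell$, so this union is closed under $\ell$-block equivalence.

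Part (ii) is the main obstacle. Given a block $B\subseteq \mathcal{E}_\ell(\mathbf{G}^F,s)$, I need to exhibit a character of $\mathcal{E}(\mathbf{G}^F,s)$ lying in $B$. The route I would take is to project a Gelfand-Graev character (attached to a regular unipotent element) onto $B$ and analyse its constituents via Jordan decomposition of characters: each block in $\mathcal{E}_\ell(\mathbf{G}^F,s)$ must contain a regular character, and the semisimple label of such a character must then have trivial $\ell$-part, producing the required element of $\mathcal{E}(\mathbf{G}^F,s)$. The technical difficulty is that $C_{\mathbf{G}^*}(s)$ can be disconnected when $Z(\mathbf{G})$ is not connected, which obstructs the cleanest form of Jordan decomposition; I would handle this by first passing to a regular embedding $\mathbf{G}\hookrightarrow\widetilde{\mathbf{G}}$ with $Z(\widetilde{\mathbf{G}})$ connected, establishing the statement there, and then descending along $\widetilde{\mathbf{G}}^F\to\mathbf{G}^F$ via Clifford theory.
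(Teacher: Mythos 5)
This theorem is not proved in the paper --- it is quoted verbatim from Cabanes--Enguehard (Theorems~8.24 and~9.12), so there is no ``paper proof'' to compare against. What you have written is a sketch of the argument in the cited source, and the high-level strategy is indeed the right one: Deligne--Lusztig orthogonality for the partition, the Brou\'e--Michel central-character argument for~(i), and Hiss's Gelfand--Graev argument for~(ii). Two steps, however, do not hold up as written.

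In part~(i) the logical direction is reversed. You establish that all $\chi\in\mathcal{E}_\ell(\mathbf{G}^F,s)$ have central characters agreeing modulo a prime above $\ell$ on semisimple class sums. But that is not what is needed to conclude that $\mathcal{E}_\ell(\mathbf{G}^F,s)$ is a \emph{union} of blocks --- for that you must show that if $\chi'\in\mathcal{E}_\ell(\mathbf{G}^F,s')$ with $s'$ not conjugate to $s$, then $\omega_\chi$ and $\omega_{\chi'}$ \emph{differ} modulo $\ell$ on some class sum, so that a block containing an element of $\mathcal{E}_\ell(\mathbf{G}^F,s)$ cannot also meet $\mathcal{E}_\ell(\mathbf{G}^F,s')$. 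This separation is the substantive content of Brou\'e--Michel and rests on the injectivity of $s\mapsto(\omega_\chi$ on semisimple $\ell'$-class sums mod $\ell)$, proved via the Curtis homomorphism; that ingredient is missing from your sketch, and agreement within a single $\mathcal{E}_\ell(\mathbf{G}^F,s)$ alone would (if taken literally) wrongly force $\mathcal{E}_\ell(\mathbf{G}^F,s)$ to lie in a single block.

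In part~(ii), the claim that ``each block in $\mathcal{E}_\ell(\mathbf{G}^F,s)$ must contain a regular character, and the semisimple label of such a character must then have trivial $\ell$-part'' is not correct: every rational Lusztig series $\mathcal{E}(\mathbf{G}^F,st)$, including those with $t\neq1$, contains regular characters, so a regular constituent of the block projection of a Gelfand--Graev character need not have trivial $\ell$-part in its label. The actual argument (Hiss, presented in Cabanes--Enguehard \S{}9 and \S{}14) exploits that the Gelfand--Graev character is $\ell$-projective and compares the $\ell$-modular reduction of $e_s\Gamma$ (the truncation to $\mathcal{E}(\mathbf{G}^F,s)$) with the projection of $\Gamma$ onto the blocks contained in $\mathcal{E}_\ell(\mathbf{G}^F,s)$; it is this comparison, not the labeling of regular characters, that forces each such block to meet $\mathcal{E}(\mathbf{G}^F,s)$. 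Your suggestion of passing to a regular embedding to handle a disconnected $C_{\mathbf{G}^*}(s)$ is sound and does appear in the literature.
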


Hence to parameterise the $\ell$-blocks of $\mathbf{G}^F$ it suffices to decompose $\mathcal{E}(\mathbf{G}^F,s)$ into $\ell$-blocks for each semisimple $\ell^\prime$-element $s\in(\mathbf{G}^*)^F$.

\begin{definition}
An irreducible character of $\mathbf{G}^F$ is called unipotent if it is a constituent of $R_\mathbf{T}^\mathbf{G}(1)$ for some $F$-stable maximal torus $\mathbf{T}$ of $\mathbf{G}$. A block is called unipotent if it contains a unipotent character.
\end{definition}

Hence $\mathcal{E}(\mathbf{G}^F,1)$ is the set of unipotent characters of $\mathbf{G}^F$. The following so-called Jordan decomposition by Lusztig relates characters to unipotent characters of a usually smaller group:

\begin{theorem}[{\cite[(4.23)]{Lusztig}}]\label{JordanDecomposition}
If $\mathbf{G}$ has connected centre and $s\in(\mathbf{G}^*)^F$ is semisimple, then there is a bijection between $\mathcal{E}(\mathbf{G}^F,s)$ and $\mathcal{E}(C_{\mathbf{G}^*}(s)^F,1)$ such that if $\chi$ is mapped to $\chi_u$ then
\begin{equation*}
\chi(1)=\chi_u(1)\cdot\left|(\mathbf{G}^*)^F:C_{\mathbf{G}^*}(s)^F\right|_{p^\prime}.
\end{equation*}
\end{theorem}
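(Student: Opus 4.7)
The plan is to construct the bijection explicitly via Deligne--Lusztig induction, at least in the generic case, and then appeal to an inductive reduction for the remaining one. Since $\mathbf{G}$ has connected centre, the centraliser $\mathbf{C}^* := C_{\mathbf{G}^*}(s)$ is connected reductive. In the favourable case where $\mathbf{C}^*$ is an $F$-stable Levi subgroup $\mathbf{L}^*$ of $\mathbf{G}^*$, duality of root data furnishes an $F$-stable Levi subgroup $\mathbf{L}$ of $\mathbf{G}$ with the ``same'' Weyl group and $F$-action, together with a linear character $\hat{s}\in\Irr(\mathbf{L}^F)$ corresponding to $s$ under the isomorphism of \cite[(8.14)]{Cabanes-Enguehard}. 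I would then define the candidate map
\begin{equation*}
\mathcal{E}(\mathbf{L}^F,1)\longrightarrow \mathcal{E}(\mathbf{G}^F,s), \qquad \lambda\longmapsto \pm R_\mathbf{L}^\mathbf{G}(\hat{s}\cdot\lambda),
\end{equation*}
with a fixed sign determined by the relative $F$-rank, and compose with the canonical bijection $\mathcal{E}(\mathbf{L}^F,1)\leftrightarrow\mathcal{E}((\mathbf{L}^*)^F,1)=\mathcal{E}(\mathbf{C}^{*F},1)$, which exists because unipotent characters depend only on the $F$-twisted Weyl group.

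The degree formula would then follow immediately from the standard identity
\begin{equation*}
\abs{R_\mathbf{L}^\mathbf{G}(\chi)(1)} = \abs{\mathbf{G}^F:\mathbf{L}^F}_{p^\prime}\cdot \chi(1),
\end{equation*}
together with the fact that duality preserves $p^\prime$-parts of orders, so that $\abs{\mathbf{G}^F:\mathbf{L}^F}_{p^\prime} = \abs{(\mathbf{G}^*)^F:\mathbf{C}^{*F}}_{p^\prime}$. Injectivity and the fact that the values lie (up to sign) in $\mathcal{E}(\mathbf{G}^F,s)$ would be obtained from a Mackey-type computation for Deligne--Lusztig characters, together with the orthogonality relations and the definition of the rational Lusztig series as the constituents of $R_\mathbf{T}^\mathbf{G}(\theta)$ for $(\mathbf{T},\theta)$ in the class of $s$. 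Surjectivity would follow by comparing cardinalities: $\abs{\mathcal{E}(\mathbf{G}^F,s)}$ depends only on the $F$-conjugacy data of $\mathbf{C}^*$, and this matches $\abs{\mathcal{E}(\mathbf{C}^{*F},1)}$.

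The main obstacle is the quasi-isolated case, where $\mathbf{C}^*$ is \emph{not} contained in any proper $F$-stable Levi subgroup of $\mathbf{G}^*$. Here there is no auxiliary Levi $\mathbf{L}$ to induce from, and the naive construction above collapses. To handle this one has to invoke Lusztig's much deeper classification of unipotent characters in terms of families and two-sided cells, establish the bijection on cuspidal unipotent pieces first, and then bootstrap using Harish-Chandra theory inside $\mathbf{C}^{*F}$; the degree compatibility in the quasi-isolated case is enforced by the shape of the generic degree polynomials attached to families, which is ultimately where Lusztig's hardest input enters. An induction on the semisimple rank of $\mathbf{G}$ combines the Levi case with this cuspidal analysis to yield the bijection for arbitrary semisimple $s$. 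I would expect to defer the quasi-isolated step to \cite{Lusztig} rather than redo it, since that is precisely the content of Lusztig's (4.23).
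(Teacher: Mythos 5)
This theorem is a black-box citation in the paper: the paper quotes Lusztig's Jordan decomposition of characters (his (4.23)) without proof, and uses it as an external input. There is therefore no proof in the paper to compare against, and the relevant question is whether your sketch would constitute an independent proof.

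It does not. Your outline of the Levi case is on the right track — when $\mathbf{L}^* = C_{\mathbf{G}^*}(s)$ is an $F$-stable Levi of $\mathbf{G}^*$, the map $\lambda \mapsto \pm R_\mathbf{L}^\mathbf{G}(\hat{s}\lambda)$ is indeed the correct candidate bijection, and the degree formula does follow from the Deligne--Lusztig degree identity once the bijection is established. But several of the steps you compress into a sentence are themselves deep. Showing that $\pm R_\mathbf{L}^\mathbf{G}(\hat{s}\lambda)$ is a single irreducible requires computing $\langle R_\mathbf{L}^\mathbf{G}(\hat{s}\lambda), R_\mathbf{L}^\mathbf{G}(\hat{s}\lambda)\rangle$, which needs the Mackey formula for Lusztig induction; this formula was not available in full generality at the time and is still not known in every case (Bonnafé--Michel handled almost all cases much later, and Lusztig's own argument avoids it). The ``cardinality matching'' you invoke for surjectivity — that $\abs{\mathcal{E}(\mathbf{G}^F,s)} = \abs{\mathcal{E}(C_{\mathbf{G}^*}(s)^F,1)}$ — is not an independently available fact one can simply appeal to; it is essentially equivalent to the existence of the bijection you are trying to construct and is one of the main things Lusztig proves. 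And the canonical bijection $\mathcal{E}(\mathbf{L}^F,1) \leftrightarrow \mathcal{E}((\mathbf{L}^*)^F,1)$ that you use silently is itself nontrivial; unipotent characters are not literally parametrised by the Weyl group alone but by more refined combinatorial data (families, Fourier transform data), and identifying the two sides is part of Lusztig's classification.

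Finally, and decisively, you write that you ``would expect to defer the quasi-isolated step to Lusztig rather than redo it, since that is precisely the content of Lusztig's (4.23).'' But (4.23) is the \emph{entire} theorem, not just the cuspidal piece, so at that point the proposal has become a citation with preamble rather than a proof. That is exactly what the paper does, only without the preamble. If the goal is genuinely to reprove Jordan decomposition, you would need to either follow Lusztig's actual route through the classification of unipotent characters via families and the non-abelian Fourier transform, or follow the later Digne--Michel / Bonnafé--Rouquier refinements which make the Lusztig induction construction rigorous and uniform — both of which are substantial undertakings well outside the scope of this paper.
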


The requirement that $\mathbf{G}$ has connected centre guarantees that $C_{\mathbf{G}^*}(s)$ is connected \cite[Thm 4.5.9]{Carter}. To allow otherwise we must define unipotent characters for a disconnected group $\mathbf{G}$: they are the constituents of $(R_\mathbf{T}^{\mathbf{G}^\circ}(1)){\uparrow}^{\mathbf{G}^F}$ for $F$-stable maximal tori $\mathbf{T}$ of $\mathbf{G}^\circ$.

Since any $s\in(\mathbf{G}^*)^F_{l^\prime}$ and $t\in C_{\mathbf{G}^*}(s)^F_\ell$ have coprime orders and commute, if $C_{\mathbf{G}^*}(s)^F$ is connected then there are correspondences between $\mathcal{E}(\mathbf{G}^F,st)$ and $\mathcal{E}(C_{\mathbf{G}^*}(st)^F,1)=\mathcal{E}(C_{C_{\mathbf{G}^*}(s)}(t)^F,1)$ and $\mathcal{E}((C_{\mathbf{G}^*}(s)^*)^F,t)$. Hence there is also a bijection between $\mathcal{E}_\ell(\mathbf{G}^F,s)$ and $\mathcal{E}_\ell((C_{\mathbf{G}^*}(s)^*)^F,1)$. 

Bonnaf\'{e} and Rouquier gave an important reduction showing that if the centraliser is contained in a proper $F$-stable Levi subgroup then there is a Morita equivalence between the related blocks:

\begin{theorem}[\cite{Bonnafe-Rouquier}]\label{Bonnafe-Rouquier}
Let $s\in(\mathbf{G}^*)^F$ be a semisimple $\ell^\prime$-element and suppose that $C_{\mathbf{G}^*}(s)$ is contained in an $F$-stable Levi subgroup $\mathbf{L}^*$ of $\mathbf{G}^*$. The $\ell$-blocks of $\mathbf{G}^F$ in $\mathcal{E}(\mathbf{G}^F,s)$ are in bijection via Jordan correspondence with the $\ell$-blocks of $\mathbf{L}^F$ in $\mathcal{E}(\mathbf{L}^F,s)$, and the corresponding blocks are Morita equivalent.
\end{theorem}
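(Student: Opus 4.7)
The plan is to construct an explicit bimodule realising the equivalence, following the strategy from Deligne--Lusztig theory. First I would fix a (not necessarily $F$-stable) parabolic subgroup $\mathbf{P}=\mathbf{L}\mathbf{U}$ of $\mathbf{G}$ whose $F$-stable Levi component $\mathbf{L}$ is dual to the given $\mathbf{L}^*$, and form the associated Deligne--Lusztig variety
\begin{equation*}
Y_\mathbf{U}=\{g\mathbf{U}\in\mathbf{G}/\mathbf{U}:g^{-1}F(g)\in\mathbf{U}\cdot F(\mathbf{U})\},
\end{equation*}
which admits commuting actions of $\mathbf{G}^F$ on the left and $\mathbf{L}^F$ on the right. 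Working over a complete $\ell$-modular system with ring of integers $\mathcal{O}$, the $\ell$-adic étale cohomology with compact support $H^\bullet_c(Y_\mathbf{U},\mathcal{O})$ is then a graded $(\mathcal{O}\mathbf{G}^F,\mathcal{O}\mathbf{L}^F)$-bimodule whose Euler characteristic on virtual characters realises Deligne--Lusztig induction $R^\mathbf{G}_\mathbf{L}$.

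Let $b$ and $c$ denote the sums of the $\ell$-block idempotents of $\mathcal{O}\mathbf{G}^F$ and $\mathcal{O}\mathbf{L}^F$ corresponding to $\mathcal{E}_\ell(\mathbf{G}^F,s)$ and $\mathcal{E}_\ell(\mathbf{L}^F,s)$ respectively. The first key step is to show that the truncated bimodule complex $b\,H^\bullet_c(Y_\mathbf{U},\mathcal{O})\,c$ is concentrated in a single cohomological degree $d:=\dim Y_\mathbf{U}$, so that $M:=b\,H^d_c(Y_\mathbf{U},\mathcal{O})\,c$ is an honest $(\mathcal{O}\mathbf{G}^F b,\mathcal{O}\mathbf{L}^F c)$-bimodule. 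I would establish this by combining Deligne's purity theorem for Frobenius eigenvalues on $H^\bullet_c(Y_\mathbf{U},\overline{\mathbb{Q}}_\ell)$ with the disjointness of rational Lusztig series: the hypothesis $C_{\mathbf{G}^*}(s)\subseteq\mathbf{L}^*$ ensures that every $\ell$-block inside $\mathcal{E}_\ell(\mathbf{G}^F,s)$ corresponds via Jordan decomposition to some $\mathcal{E}(\mathbf{G}^F,st)$ with $t$ an $\ell$-element of $C_{\mathbf{G}^*}(s)^F\subseteq(\mathbf{L}^*)^F$, and this constrains the Frobenius weights surviving after projection by $b$ and $c$ to a single range forcing just one non-zero degree.

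It then remains to verify that $M$ implements a Morita equivalence $\mathcal{O}\mathbf{G}^F b\simeq\mathcal{O}\mathbf{L}^F c$. The plan is to compute the endomorphism ring $\mathrm{End}_{\mathcal{O}\mathbf{G}^F b}(M)^{\mathrm{op}}$ via a Künneth-type identification with a summand of the cohomology of the diagonal quotient $Y_\mathbf{U}\times_{\mathbf{G}^F}Y_\mathbf{U}$, and then to stratify this by $(\mathbf{L},\mathbf{L})$-double cosets in $\mathbf{G}$ so that each stratum contributes a composition of Deligne--Lusztig functors along an intermediate Levi (the Mackey formula). The centraliser condition once again ensures that every non-diagonal stratum lands in Lusztig series disjoint from $\mathcal{E}(\mathbf{L}^F,s)$ and is therefore killed by $c$, leaving only the diagonal stratum which contributes $\mathcal{O}\mathbf{L}^F c$ itself. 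Combined with a progenerator check and reduction modulo $\ell$, this yields the Morita equivalence over $k$, and the induced bijection of blocks is compatible with Jordan decomposition via Theorem \ref{JordanDecomposition} applied inside the connected reductive group $C_{\mathbf{G}^*}(s)$.

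The main obstacle is precisely this endomorphism-ring computation: once purity is in place the degree-concentration is relatively formal, but identifying $\mathrm{End}_{\mathcal{O}\mathbf{G}^F b}(M)^{\mathrm{op}}$ with $\mathcal{O}\mathbf{L}^F c$ requires a delicate Mackey-type analysis of the cohomology of the product variety in which the strict containment $C_{\mathbf{G}^*}(s)\subseteq\mathbf{L}^*$ is what forces all but one double-coset contribution to vanish. Weakening this hypothesis leaves cross terms that can only be eliminated after passage to the derived category, giving in general a splendid derived equivalence rather than a Morita one; controlling those cross terms integrally is where the real work of \cite{Bonnafe-Rouquier} lies.
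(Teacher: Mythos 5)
This theorem is cited directly from Bonnaf\'e--Rouquier; the paper under review supplies no proof of its own, so there is nothing internal to compare against. What you have written is a fair high-level account of the original proof strategy in that paper: the $(\mathcal{O}\mathbf{G}^F,\mathcal{O}\mathbf{L}^F)$-bimodule is indeed carved out of the $\ell$-adic cohomology of a Deligne--Lusztig variety attached to a (generally non-$F$-stable) parabolic with Levi $\mathbf{L}$, the containment $C_{\mathbf{G}^*}(s)\subseteq\mathbf{L}^*$ is what upgrades the splendid Rickard equivalence to a Morita equivalence via concentration in a single cohomological degree, and compatibility with Jordan decomposition follows from how Lusztig induction interacts with the series $\mathcal{E}(\mathbf{G}^F,st)$.

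Two caveats on the details, so you don't overclaim. First, the concentration theorem is not a near-formal consequence of Deligne's purity combined with disjointness of $\ell$-blocks across Lusztig series; it is one of the genuinely hard points of Bonnaf\'e--Rouquier, established by a finer geometric analysis of the variety and its compactification. Presenting it as ``relatively formal once purity is in place'' understates the difficulty. Second, invoking ``the Mackey formula'' for Deligne--Lusztig induction to stratify $Y_\mathbf{U}\times_{\mathbf{G}^F}Y_\mathbf{U}$ is delicate: the Mackey formula for Lusztig functors was not available in full generality at the time (it was only settled in essentially all cases much later by Bonnaf\'e--Michel), and Bonnaf\'e--Rouquier work instead with a geometric decomposition of the product variety that plays the role of a Mackey-type argument without presupposing the character-level formula. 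Your overall architecture is correct, but the actual argument cannot simply cite the Mackey formula as a black box.
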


This Morita equivalence preserves decomposition matrices and, as shown further with Dat in \cite{Bonnafe-Dat-Rouquier}, also defect groups. The result motivates the following definition:

\begin{definition}
A semisimple element $s\in\mathbf{G}$ is quasi-isolated if its centraliser $C_\mathbf{G}(s)$ is not contained in any proper Levi subgroup of $\mathbf{G}$; further, it is isolated if the identity component $C_\mathbf{G}(s)^\circ$ is not contained in a proper Levi subgroup. A block is then (quasi)-isolated if its semisimple label is (quasi)-isolated.
\end{definition}

Unipotent blocks are isolated, and if $\mathbf{G}$ has connected centre then $C_{\mathbf{G}^*}(s)$ is always connected so the terms isolated and quasi-isolated coincide. Note that if $s\in\mathbf{G}^F$ and $C_\mathbf{G}(s)$ is contained in a Levi subgroup $\mathbf{L}$ of $\mathbf{G}$, then $C_\mathbf{G}(s)$ is indeed also contained in an \textit{F-stable} Levi subgroup $\mathbf{L}\cap F(\mathbf{L})$. 

Additionally, the above equivalence is preserved taking quotients by central $\ell$-subgroups:

\begin{theorem}[{\cite[Prop. 4.1]{Eaton-Kessar-Kulshammer-Sambale}}]\label{BonnafeRouquiercentral}
If $B$ and $C$ are $\ell$-blocks of $\mathbf{G}^F$ and $\mathbf{L}^F$ in correspondence by Theorem \ref{Bonnafe-Rouquier}, and $Z$ is a central $\ell$-subgroup of $\mathbf{G}^F$, then the $\ell$-blocks $\overline{B}$ and $\overline{C}$ of $\mathbf{G}^F/Z$ and $\mathbf{L}^F/Z$ contained in $B$ and $C$ respectively are Morita equivalent. 
\end{theorem}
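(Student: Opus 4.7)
The plan is to lift the Bonnaf\'{e}--Rouquier bimodule realising the Morita equivalence between $B$ and $C$ to a bimodule realising a Morita equivalence between the quotient blocks $\overline{B}$ and $\overline{C}$. As a preliminary, since $Z \leq Z(\mathbf{G}^F)$ is an $\ell$-subgroup, the kernels of the quotient maps $k\mathbf{G}^F \twoheadrightarrow k(\mathbf{G}^F/Z)$ and $k\mathbf{L}^F \twoheadrightarrow k(\mathbf{L}^F/Z)$ are nilpotent modulo the radical, so block idempotents descend uniquely; one may identify $\overline{B} \cong B \otimes_{kZ} k$ and $\overline{C} \cong C \otimes_{kZ} k$, and every $\overline{B}$-module is just a $B$-module on which $Z$ acts trivially (and likewise for $\overline{C}$).

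Next, I would recall from the proof of Theorem \ref{Bonnafe-Rouquier} that the Morita equivalence is realised by an explicit $(B,C)$-bimodule $M$ appearing as a direct summand of the $\ell$-adic cohomology of a Deligne--Lusztig variety $\mathbf{Y}_{\mathbf{U}}$ associated with a parabolic with Levi factor $\mathbf{L}$. The $\mathbf{G}^F$-action on $M$ arises from left multiplication on $\mathbf{Y}_{\mathbf{U}}$ and the $\mathbf{L}^F$-action from right multiplication; since $Z$ is central, for any $z \in Z$ left and right multiplication by $z$ on $\mathbf{Y}_{\mathbf{U}}$ coincide, giving $zm = mz$ for all $z \in Z$ and $m \in M$. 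The same holds for the Morita inverse bimodule $N$, so the quotient $\overline{M} := M \otimes_{kZ} k$ (and likewise $\overline{N}$) is a well-defined $(\overline{B},\overline{C})$-bimodule. Tensoring the bimodule isomorphisms $M \otimes_C N \cong B$ and $N \otimes_B M \cong C$ with ${-} \otimes_{kZ} k$ then produces $\overline{M} \otimes_{\overline{C}} \overline{N} \cong \overline{B}$ and $\overline{N} \otimes_{\overline{B}} \overline{M} \cong \overline{C}$, so $\overline{M}$ implements a Morita equivalence between $\overline{B}$ and $\overline{C}$.

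The main obstacle is verifying that the left and right $Z$-actions on the Bonnaf\'{e}--Rouquier bimodule really do coincide, which requires unpacking the geometric construction and the $\mathbf{G}^F \times \mathbf{L}^F$-action on the relevant Deligne--Lusztig variety. Once this equivariance is in hand, the remainder is a formal descent of Morita equivalences along central $\ell$-subgroup quotients, a general phenomenon that does not depend on the specific block-theoretic setup.
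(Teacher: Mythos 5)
The statement you're proving is cited in the paper from \cite[Prop.~4.1]{Eaton-Kessar-Kulshammer-Sambale}; the paper itself does not supply a proof, so there is no in-paper argument to compare against. Evaluating your proposal on its own terms: the strategy is sound and identifies the genuinely load-bearing point, namely that the Bonnaf\'{e}--Rouquier bimodule $M$ is $Z$-balanced (the left and right actions of $Z$ coincide) because of its geometric origin. Two places warrant a bit more care than you give them. First, for the geometric coincidence of left and right multiplication by $z$ on the Deligne--Lusztig variety you need $z$ central in $\mathbf{G}$ itself, not merely in $\mathbf{G}^F$; this is automatic in the situations the paper actually uses (where $\mathbf{G}$ is simply connected, so $Z(\mathbf{G}^F)=Z(\mathbf{G})^F$), but should be said. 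Second, the final descent step --- deducing $\overline{M}\otimes_{\overline{C}}\overline{N}\cong\overline{B}$ from $M\otimes_C N\cong B$ by applying $-\otimes_{kZ}k$ --- is not purely formal: to commute the two tensor products you need $M$ and $N$ to be free over the local ring $kZ$. This does hold, since $M$ is projective as a left $B$-module and $Z$ acts through $B$, so its restriction to $kZ$ is projective hence free, but the base-change compatibility should be justified rather than asserted. With those two points filled in, your argument is a correct and fairly standard descent-of-bimodules proof, and it is in the same spirit as what the cited reference does.
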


In light of the above, if a block $B$ of $\mathbf{G}^F$ is (quasi)-isolated, then we will also say that the block $\overline{B}$ of $\mathbf{G}^F/Z$ contained in $B$ is (quasi)-isolated. Quasi-isolated blocks of finite groups of Lie type have been well studied; in particular, Kessar and Malle describe the quasi-isolated $\ell$-blocks of exceptional groups of Lie type when $\ell$ is a bad prime for $\mathbf{G}$, obtaining the following description:

\begin{theorem}[{\cite[Thm 1.2]{Kessar-Malle}}]\label{Kessar-Malle}
Let $\mathbf{G}$ be a simply connected simple exceptional algebraic group, $\ell\neq p$ be a bad prime for $\mathbf{G}$, and $1\neq s\in(\mathbf{G}^*)^F$ be a quasi-isolated $\ell^\prime$-element.
\begin{enumerate}[label=(\roman*)]
\item There is a bijection between $\ell$-blocks of $\mathbf{G}^F$ in $\mathcal{E}_\ell(\mathbf{G}^F,s)$ and pairs $(\mathbf{L},\lambda)$ where $\mathbf{L}$ is an $e$-split Levi subgroup of $\mathbf{G}$ and $\lambda\in\mathcal{E}_\ell(\mathbf{L}^F,s)$ is $e$-cuspidal of quasi-central $\ell$-defect.
\item If a block corresponds to $(\mathbf{L},\lambda)$ then it has a defect group $D$ such that $Z(\mathbf{L})_\ell^F=Z\trianglelefteq P\trianglelefteq D$, where $D/P$ is isomorphic to a Sylow $\ell$-subgroup of the Weyl group $W_{\mathbf{G}^F}(\mathbf{L},\lambda)$ and $P/Z$ is isomorphic to a Sylow $\ell$-subgroup of $\mathbf{L}^F/Z[\mathbf{L},\mathbf{L}]^F$.
\end{enumerate}
\end{theorem}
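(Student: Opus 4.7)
The plan is to prove the theorem by induction on the rank and by a case division over the exceptional types $G_2,F_4,E_6,{}^2\!E_6,E_7,E_8$ together with a reduction to the list of bad primes for each ($\ell \in \{2,3\}$ generically, with $\ell=5$ entering for $E_8$). The first step is to invoke Bonnaf\'{e}'s classification of quasi-isolated semisimple classes in simple exceptional algebraic groups in order to obtain an explicit finite list of the quasi-isolated $\ell'$-elements $s\in(\mathbf{G}^*)^F$ that need to be analysed. For each such $s$ we read off the structure of $C_{\mathbf{G}^*}(s)$ and of its component group, noting that since $\mathbf{G}$ is simply connected (so $\mathbf{G}^*$ is adjoint) the centralisers may be disconnected, which prevents a direct appeal to Theorem \ref{JordanDecomposition} and forces us to use its extension to disconnected groups via unipotent characters of $(R_\mathbf{T}^{\mathbf{G}^\circ}(1)){\uparrow}^{\mathbf{G}^F}$.

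With the Lusztig series $\mathcal{E}(\mathbf{G}^F,s)$ in hand, the next step is to decompose $\mathcal{E}_\ell(\mathbf{G}^F,s)$ into $\ell$-blocks. For this I would apply the $e$-Harish-Chandra machinery of Brou\'{e}--Malle--Michel together with the Cabanes--Enguehard theorem, where $e=e_\ell(q)$ is the multiplicative order of $q$ modulo $\ell$ (resp.\ $4$ if $\ell=2$). These results attach to each $\ell$-block in $\mathcal{E}_\ell(\mathbf{G}^F,s)$ an $e$-cuspidal pair $(\mathbf{L},\lambda)$ with $\lambda\in\mathcal{E}(\mathbf{L}^F,s)$, and one obtains a genuine bijection onto the blocks precisely under the quasi-central $\ell$-defect hypothesis on $\lambda$; the subtle point to verify is that this hypothesis, tailored to bad primes, is actually equivalent to the Cabanes--Enguehard condition ``$\lambda$ of central $\ell$-defect'' that appears in the good-prime case. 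This equivalence has to be checked by direct inspection of $\mathbf{L}^F$ in each of the finite list of pairs arising at step one; it is where the disconnectedness of centralisers and the badness of $\ell$ require ad hoc arguments.

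To extract the defect group structure claimed in (ii) I would work inside the normaliser $N_{\mathbf{G}^F}(\mathbf{L},\lambda)$ and transfer, through the Brauer correspondence, the defect group of the block of $\mathbf{G}^F$ to one of its Brauer correspondent. The normal series $Z\trianglelefteq P\trianglelefteq D$ then arises canonically: $Z=Z(\mathbf{L})_\ell^F$ is a normal $\ell$-subgroup of the inertial group; $P/Z$ is a Sylow $\ell$-subgroup of $\mathbf{L}^F/Z[\mathbf{L},\mathbf{L}]^F$, picked up from the $\ell$-part of the defect of $\lambda$ itself; and the quotient $D/P$ accounts for the $\ell$-part of the relative Weyl group $W_{\mathbf{G}^F}(\mathbf{L},\lambda)$ via the standard Dade-style description of defect groups of blocks of semidirect products. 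The orders match by comparing the $\ell$-part of $\chi(1)$ for $\chi$ in the block (using Jordan decomposition, Theorem \ref{JordanDecomposition}, applied to the dual of $C_{\mathbf{G}^*}(s)^\circ$) with $|\mathbf{G}^F|_\ell/|D|$.

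The main obstacle I anticipate is twofold. First, the case-by-case verification of the quasi-central defect condition and of the equality $|D|=|\mathbf{G}^F|_\ell\cdot\chi(1)_\ell^{-1}$ for a representative $\chi$ in each series: in some series for $E_7,E_8$ at $\ell=2$ and for $E_8$ at $\ell=3,5$ the centralisers $C_{\mathbf{G}^*}(s)$ themselves contain bad primes for the parent, so the good-prime results of Cabanes--Enguehard cannot be transported naively. Second, the treatment of disconnected centralisers at $\ell=2$ for adjoint $E_7$ and $E_6$, which forces a careful analysis of character extensions to the component group and an accounting of Clifford theory contributions to the parameterising $e$-cuspidal pairs. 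Everything else -- the enumeration of the pairs and the identification of $P/Z$ and $D/P$ once the cuspidal pair is fixed -- is then essentially a bookkeeping exercise that can be compiled into a table for each exceptional type.
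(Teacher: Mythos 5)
This statement is not proved in the paper at all: it is quoted verbatim as Theorem~1.2 of Kessar--Malle \cite{Kessar-Malle} and used as a black box. There is therefore no ``paper's own proof'' to compare against. That said, your sketch is a broadly accurate high-level summary of the strategy actually used in the cited source: Bonnaf\'e's list of quasi-isolated classes, the $e$-Harish-Chandra theory of Brou\'e--Malle--Michel combined with Cabanes--Enguehard to cut $\mathcal{E}_\ell(\mathbf{G}^F,s)$ into blocks labelled by $e$-cuspidal pairs, and the normaliser/relative-Weyl-group analysis for the defect group filtration $Z\trianglelefteq P\trianglelefteq D$, with a bad-prime-specific case check compiled into tables. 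You correctly flag the two genuine obstacles (bad-prime deviations from the Cabanes--Enguehard good-prime picture, and disconnected centralisers for adjoint $\mathbf{G}^*$ requiring Clifford-theoretic bookkeeping), which is where the bulk of the work in the original paper sits. What your proposal cannot do, and does not pretend to do, is supply the extensive case-by-case verification that makes the argument a proof; as a roadmap it is sound, but as written it is an outline of a citation, not a self-contained argument.

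Two small points of precision worth fixing if you were to develop this. First, the ``quasi-central $\ell$-defect'' condition is a genuine weakening of ``central $\ell$-defect,'' not merely a relabelling, and the bijection in (i) really does require the weaker notion at bad primes; asserting the two are ``actually equivalent'' after inspection is not the right framing. Second, in (ii) the group $D/P$ is a Sylow $\ell$-subgroup of $W_{\mathbf{G}^F}(\mathbf{L},\lambda)$, which for bad $\ell$ need not arise from a ``standard Dade-style'' semidirect-product argument without further work, since $\ell$ may divide the order of the relative Weyl group in ways controlled only by the explicit case analysis.
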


See \cite{Kessar-Malle} for definitions of the terminology used in (i); we will not use them further. For each exceptional group they list all $(\mathbf{L},\lambda)$ that occur, and give further information.

Ennola duality \cite{Broue-Malle-Michel}, formally swapping $q$ with $-q$, is used in \cite{Kessar-Malle} and will be used later to translate arguments to different values of $q\bmod4$ (which correspond to different values of $e$ in the above theorem); all groups in question are exchanged with their Ennola duals, so for example $E_6(q)$ becomes $^2\!E_6(q)$ and $q-1$ becomes $q+1$, while $G_2(q)$ is its own Ennola dual.

\subsection{Normal Subgroups}

Let $N$ be a normal subgroup of a finite group $G$ and let $\ell$ be a prime. We collect some facts relating the characters and blocks of $N$ and $G$ that will be used throughout.

Of particular note, if $\abs{G:N}=2$ then, from Clifford's Theorem \cite[Thm 20.8]{James-Liebeck}, for any $\varphi\in\Irr(N)$: either $\varphi{\uparrow}^G=\chi_1+\chi_2$ and $\chi_1{\downarrow}_N=\chi_2{\downarrow}_N=\varphi$ for some distinct $\chi_1,\chi_2\in\Irr(G)$ of equal degree; or $\varphi{\uparrow}^G=\psi{\uparrow}^G=\chi$ and $\chi{\downarrow}_N=\varphi+\psi$ for some $\chi\in\Irr(G)$ and $\varphi\neq\psi\in\Irr(N)$ with $\varphi,\psi$ of equal degree. We will say these characters either split or fuse on induction or restriction accordingly. 

\begin{lemma}[{\cite[Thm 2.4.7]{Linckelmann1}}]\label{inductionsimple}
Let $\phi\in\Irr(N)$. Then $\phi{\uparrow}^G\in\Irr(G)$ if and only if $\phi^g\neq\phi$ for all $g\in G\setminus N$.
\end{lemma}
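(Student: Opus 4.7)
The plan is to reduce the statement to a standard Frobenius reciprocity / Mackey calculation. Since $\phi\uparrow^G$ is a genuine character, $\phi\uparrow^G\in\Irr(G)$ if and only if the inner product $\langle\phi\uparrow^G,\phi\uparrow^G\rangle_G$ equals $1$, so the whole argument reduces to computing this inner product.

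First I would apply Frobenius reciprocity to rewrite
\begin{equation*}
\langle\phi\uparrow^G,\phi\uparrow^G\rangle_G=\langle\phi,(\phi\uparrow^G){\downarrow}_N\rangle_N.
\end{equation*}
Then, because $N$ is normal in $G$, the double coset decomposition $N\backslash G/N$ collapses to $G/N$, and Mackey's formula gives
\begin{equation*}
(\phi\uparrow^G){\downarrow}_N=\sum_{gN\in G/N}\phi^g,
\end{equation*}
where $\phi^g(n)=\phi(g^{-1}ng)$. Substituting back yields
\begin{equation*}
\langle\phi\uparrow^G,\phi\uparrow^G\rangle_G=\sum_{gN\in G/N}\langle\phi,\phi^g\rangle_N.
\end{equation*}

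Each conjugate $\phi^g$ is again in $\Irr(N)$, so $\langle\phi,\phi^g\rangle_N\in\{0,1\}$, being $1$ precisely when $\phi^g=\phi$. The coset $g=1$ always contributes $1$, so the whole sum equals $1$ if and only if $\phi^g\neq\phi$ for every $g\in G\setminus N$. Combining with the first paragraph, this is exactly the condition for $\phi\uparrow^G$ to be irreducible, proving both directions simultaneously.

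There is no real obstacle here; the only point to be careful about is ensuring that $\phi^g$ depends only on the coset $gN$ (which is immediate because $\phi$ is $N$-invariant under conjugation by elements of $N$), so that the sum over $G/N$ is well-defined. This lemma is essentially a restatement of Clifford's theorem in the case where the stabiliser of $\phi$ in $G$ coincides with $N$, and no deeper input is needed beyond Mackey's formula and the characterisation of irreducibility by the norm.
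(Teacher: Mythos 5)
Your argument is correct: the norm computation via Frobenius reciprocity and Mackey's formula (which simplifies because $N$ is normal) is the standard ordinary-character proof, and every step checks out. The paper, however, does not prove this lemma itself; it is cited directly from Linckelmann's book, so there is no in-paper proof to compare against. It is worth noting that the cited Theorem~2.4.7 is a module-theoretic statement (simple modules over a group algebra $kN$, $k$ an arbitrary field) proved via Clifford's theorem and Frobenius reciprocity as an adjunction, which makes it valid in positive characteristic as well. Your proof, relying on $\langle\phi\uparrow^G,\phi\uparrow^G\rangle_G=1$ characterising irreducibility, is specific to ordinary (characteristic-zero) characters; this is exactly the form in which the paper uses the lemma, so no harm is done, but the cited source proves something strictly more general by a route that avoids inner products of characters altogether.
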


\begin{lemma}[{\cite[Thm 9.4]{Navarro}}]\label{coveringblock}
Let $B$ be a block of $G$ covering a block $b$ of $N$ and $\varphi\in\Irr(b)\cup\IBr(b)$. Then $\varphi$ is a constituent of $\chi{\downarrow}_N$ for some $\chi\in\Irr(B)\cup\IBr(B)$.
\end{lemma}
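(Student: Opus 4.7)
The plan is to reformulate the claim via Frobenius reciprocity and then work with induced modules and block idempotents, handling $\varphi\in\Irr(b)$ and $\varphi\in\IBr(b)$ in parallel, using ordinary characters in the former and projective indecomposable modules in the latter.

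For $\varphi\in\Irr(b)$, Frobenius reciprocity reduces the claim to showing that $\varphi{\uparrow}^G$ has a constituent in $\Irr(B)$. If $V$ is an irreducible $KN$-module affording $\varphi$ over a sufficiently large field $K$, this amounts to
\[
V{\uparrow}^G\cdot e_B \;=\; V\otimes_{KN} KGe_B \;=\; V\otimes_{KN}(e_b\cdot KGe_B)\;\neq\; 0,
\]
where the second equality uses $V\cdot e_b=V$ and $e_b,e_B$ are the block idempotents of $b,B$. The factor $e_b\cdot KGe_B$ is nonzero because $e_b\cdot e_B\neq 0$, which is the standard reformulation of ``$B$ covers $b$'' in terms of idempotents.

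The substantive step is to show that $V$ itself appears as a left $KN$-constituent of the bimodule $e_b\cdot KG\cdot e_B$, not merely that this bimodule is nonzero. Here I would invoke the Clifford-theoretic structure of blocks covering $b$, in particular the Fong--Reynolds correspondence between blocks of $G$ covering $b$ and blocks of the inertial subgroup $T=G_b$ covering $b$: the orbit structure of $\Irr(b)$ under $G$ coincides with that under $T$, and in the $T$-stable setting, the characters in $\Irr(B^T)$, with $B^T$ the Fong--Reynolds correspondent of $B$, can be shown to cover every $T$-orbit on $\Irr(b)$ via the classical Clifford theory attached to the twisted group algebra of $T/N$ by the cocycle determined by $\varphi$. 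Reinducing via $\psi\mapsto\psi{\uparrow}^G$ then produces the required $\chi\in\Irr(B)$.

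The modular case I would treat analogously, with $V$ replaced by the projective indecomposable $kN$-module $P(\varphi)$ for $\varphi\in\IBr(b)$: the induced module $P(\varphi){\uparrow}^G$ is projective over $kG$, its $B$-component is nonzero by the same computation, and any indecomposable summand $P(\chi)$ identifies some $\chi\in\IBr(B)$ with $\varphi$ as a constituent of $\chi{\downarrow}_N$ by Frobenius reciprocity. The main obstacle is the Clifford-theoretic step highlighted above: it requires the full Fong--Reynolds machinery to rule out the a priori possibility that the characters in $\Irr(B)$ collectively cover only a proper subset of the $T$-orbits on $\Irr(b)$, since each individual $\chi\in\Irr(B)$ only covers one $G$-orbit via Clifford's theorem.
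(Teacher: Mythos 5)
This lemma is stated in the paper only with a citation to Navarro's book (Theorem 9.4); the paper itself provides no proof, so there is no internal argument to compare yours against.

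Evaluating your proposal on its own terms, the Frobenius-reciprocity reduction and the idempotent computation $V{\uparrow}^G e_B \cong V\otimes_{KN}(e_bKGe_B)$ are correct, and you correctly flag the real difficulty: $e_bKGe_B\neq 0$ (which follows from the definition of covering) is strictly weaker than what is needed, namely that the $\varphi$-isotypic component $f_\varphi KGe_B$ is nonzero. The gap is that the step you offer to close this -- Fong--Reynolds plus ``classical Clifford theory attached to the twisted group algebra of $T/N$'' -- does not actually do it. Fong--Reynolds reduces you to the case where $b$ is $G$-invariant, but there the question is still whether a \emph{single} block $B$ with $e_B\leq e_b$ must cover every $G$-orbit of $\Irr(b)$, not merely some nonempty union of orbits. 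Clifford theory and the associated twisted group algebra describe, for a fixed $\varphi$, how $\Irr(G\mid\varphi)$ is organised, but they do not by themselves tell you which blocks of $G$ these characters fall into, nor that a given block covering $b$ intersects $\Irr(G\mid\varphi)$ for every orbit representative $\varphi$. In the crossed-product picture $A=e_b\mathcal{O}Ge_b = b*(G/N)$ one sees the issue plainly: $e_BA e_b$ is a nonzero projective right $b$-module, hence a sum of PIMs $P(\mu')^{n_{\mu'}}$, but $e_BA\otimes_b P(\mu)\neq 0$ requires some $\mu'$ with $n_{\mu'}>0$ and Cartan entry $c_{\mu\mu'}\neq 0$, and nothing you have said forces the set $\{\mu':n_{\mu'}>0\}$ to meet a Cartan-neighbourhood of every $\mu\in\IBr(b)$. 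What is missing is an argument that exploits the indecomposability of $b$ over $\mathcal{O}$ (equivalently, connectedness of the decomposition or Cartan graph of $b$) to propagate coverage from some $\varphi$ to all of $\Irr(b)$; this is genuine block-theoretic input beyond the Clifford machinery you cite. As written, the ``main obstacle'' you identify is named but not overcome.
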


\begin{lemma}[{\cite[Thm 8.11, Cor. 9.6, Thm 9.17]{Navarro}}]\label{evenindex}
Let $\abs{G:N}$ be a power of $\ell$.
\begin{enumerate}[label=(\roman*)]
\item If $\varphi\in\IBr(N)$ then there is a unique $\chi\in\IBr(G)$ covering $\varphi$ and $\chi{\downarrow}_N$ is the sum of the distinct $G$-conjugates of $\varphi$.
\item If $b$ is an $\ell$-block of $N$ then there is a unique $\ell$-block $B$ of $G$ covering $b$.
\item Furthermore, if $b$ is $G$-invariant with defect group $D$, then the defect groups of $B$ have order $\abs{D}\cdot\abs{G:N}$.
\end{enumerate}
\end{lemma}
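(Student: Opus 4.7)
The plan is to treat the three parts in the order stated, relying only on standard Clifford theory for Brauer characters, Fong–Reynolds for blocks, and the elementary observation that in characteristic $\ell$ the group $k^\times$ contains no nontrivial $\ell$-power roots of unity, so that every $2$-cocycle on an $\ell$-group with values in $k^\times$ is trivial.

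For (i), I would first reduce to the stabiliser $T = \mathrm{Stab}_G(\varphi)$, which satisfies $N \trianglelefteq T \trianglelefteq G$ and $|T:N|$ a power of $\ell$. Clifford's theorem for Brauer characters sets up a bijection between $\IBr(G \mid \varphi)$ and $\IBr(T \mid \varphi)$ via induction, so it suffices to prove uniqueness over $T$. The action of $T$ on a projective cover of $\varphi$ gives a projective representation of $T/N$ with cocycle in $H^2(T/N, k^\times)$; since $T/N$ is an $\ell$-group and $k^\times$ has no $\ell$-torsion in characteristic $\ell$, this cocycle is trivial, so $\varphi$ extends to a Brauer character of $T$. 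By Gallagher's theorem the extensions are in bijection with $\IBr(T/N)$, but $T/N$ is an $\ell$-group so $\IBr(T/N) = \{1\}$. Thus there is a unique extension, inducing to the unique $\chi \in \IBr(G)$ covering $\varphi$, and $\chi{\downarrow}_N$ is the sum of $G$-conjugates of $\varphi$ by Clifford's theorem.

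For (ii), I would use the analogous Fong–Reynolds reduction: blocks of $G$ covering $b$ are in bijection with blocks of the stabiliser $T$ of $b$ that cover $b$. Again $|T:N|$ is an $\ell$-power. Blocks covering $b$ correspond to blocks of the twisted group algebra of $T/N$ over the block algebra of $b$; the obstruction is again a class in $H^2(T/N, k^\times)$ which is trivial for the same reason as above, so one is reduced to blocks of $T/N$, which is a (connected) $\ell$-group with only one block. This gives uniqueness of $B$ covering $b$.

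For (iii), assuming $b$ is $G$-invariant with defect group $D$, the unique block $B$ covering $b$ from (ii) has a defect group $D^*$ with $D^* \cap N$ a defect group of $b$, so up to conjugacy $D = D^* \cap N$. The quotient $D^* N / N$ is an $\ell$-subgroup of $G/N$, but standard defect-group theory (e.g.\ Knörr's theorem, or the Brauer correspondence) ensures that $D^*$ maps onto a Sylow $\ell$-subgroup of $G/N$; since $G/N$ is itself an $\ell$-group we get $D^* N = G$, and hence $|D^*| = |D| \cdot |G:N|$. The main obstacle in each part is the cohomological triviality step — this is what makes the argument work specifically when $|G:N|$ is an $\ell$-power — but it is immediate from the absence of $\ell$-power roots of unity in $k^\times$; all other steps are formal consequences of Clifford theory and the Fong–Reynolds correspondence.
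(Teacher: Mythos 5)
The paper does not prove this lemma; it is cited verbatim from Navarro's book (Theorem 8.11, Corollary 9.6, Theorem 9.17), so there is no in-paper proof to compare against, and I can only assess your argument on its own terms.

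For parts (i) and (ii), your route through the vanishing of $H^2(T/N, k^\times)$ for an $\ell$-group $T/N$ in characteristic $\ell$ is sound: $k^\times$ has no $\ell$-torsion and is $\ell$-divisible, so the obstruction group vanishes; $\varphi$ then extends uniquely to $T$ since $\IBr(T/N)$ is a singleton, and the Clifford correspondence finishes (i). In (ii) the phrase ``blocks of the twisted group algebra of $T/N$ over the block algebra of $b$'' needs tightening — what you mean is the crossed product $b *_\alpha (T/N)$, whose blocks biject with those of a twisted group algebra $k_\alpha(T/N)$ after reducing modulo $J(b)$ and using that $b/J(b)$ is a matrix algebra; once $\alpha$ is untwisted one uses that $k[T/N]$ is local. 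Sketchy, but repairable.

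Part (iii) has a genuine gap. You assert that ``standard defect-group theory (e.g.\ Knörr's theorem, or the Brauer correspondence) ensures that $D^*$ maps onto a Sylow $\ell$-subgroup of $G/N$.'' Knörr's theorem, as recalled in Lemma \ref{Knorr}, only gives $D^* \cap N$ as a defect group of $b$; it says nothing about the image $D^*N/N$. The claimed surjection is in fact false in general: take $G = A\times H$, $N = A\times 1$, $b$ any block of $A$, and $B = b\otimes c$ with $c$ a block of $H$ of less than full defect. Then $b$ is $G$-invariant, $B$ covers $b$, but $D^*N/N$ is a defect group of $c$, not a Sylow $\ell$-subgroup of $G/N\cong H$. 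The only setting in which $D^*N/N$ is automatically Sylow in $G/N$ is precisely when $G/N$ is an $\ell$-group — which is exactly the statement of (iii). So the crucial step $D^*N = G$ is being asserted rather than proved, and it does not follow from Lemma \ref{Knorr} or from a generic appeal to Brauer correspondence. A correct argument needs a genuine input specific to the $\ell$-group index case, for instance exploiting that $e_B = e_b \in kN$ (since $b$ is $G$-invariant and, by (ii), $B$ is the unique block over it) together with the Brauer homomorphism at $D^*$, or an induction on $|G:N|$ reducing to index $\ell$ and comparing heights of characters in $B$ and $b$.
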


Now let $\overline{G}=G/N$. The characters of $\overline{G}$ inflate to characters of $G$, so $\Irr(\overline{G})$ and $\IBr(\overline{G})$ are viewed as subsets of $\Irr(G)$ and $\IBr(G)$, and similarly for their blocks; each block of $\overline{G}$ is contained in a unique block of $G$. 

\begin{lemma}[{\cite[Thm 9.9]{Navarro}}]
\begin{enumerate}[label=(\roman*)]
\item If $N$ is an $\ell$-group, then each $\ell$-block $B$ of $G$ contains an $\ell$-block of $\overline{G}$ whose defect groups are $D/N$ for defect groups $D$ of $B$. 
\item If $N$ is an $\ell^\prime$-group and $\overline{B}$ is an $\ell$-block of $\overline{G}$ contained in an $\ell$-block $B$ of $G$, then $\Irr(\overline{B})=\Irr(B)$ and $\IBr(\overline{B})=\IBr(B)$.
\end{enumerate}
\end{lemma}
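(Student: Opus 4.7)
The plan is to treat the two parts separately, in each case exploiting the surjection $\pi:kG\twoheadrightarrow k\overline{G}$ induced by the quotient $g\mapsto gN$ and the corresponding map on centres. Throughout I identify a block with its primitive central idempotent.

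For part (i), the key observation is that $N$ being a normal $\ell$-subgroup makes $kN$ a local $k$-algebra (as we are in characteristic $\ell$), so its augmentation ideal $I_N$ is nilpotent; hence $\ker(\pi)=I_N\cdot kG$ is a nilpotent two-sided ideal of $kG$. Since idempotents lift uniquely through nilpotent ideals, $\pi$ induces a bijection between primitive central idempotents of $kG$ and those of $k\overline{G}$; equivalently, each block $B$ of $G$ contains exactly one block $\overline{B}$ of $\overline{G}$, namely the one whose idempotent is $\pi(e_B)$. For the defect-group assertion, $N\leq O_\ell(G)$ lies in every defect group $D$ of $B$, so $D/N$ is an $\ell$-subgroup of $\overline{G}$; one then verifies via the Brauer homomorphism (or a direct computation with defect class sums) that $D/N$ is a defect group of $\overline{B}$, by comparing the Brauer pair $(D,e_B)$ for $G$ with its image $(D/N,e_{\overline{B}})$ for $\overline{G}$.

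For part (ii), since $N$ is an $\ell^\prime$-group, $|N|$ is invertible in $k$, and $e_N:=\tfrac{1}{|N|}\sum_{n\in N}n$ is a well-defined central idempotent of $kG$ (projecting onto the trivial $kN$-module). A direct computation shows that the subalgebra $e_N\cdot kG$ has basis $\{e_Ng:gN\in\overline{G}\}$ and that the restriction of $\pi$ gives an isomorphism $e_N\cdot kG\xrightarrow{\sim} k\overline{G}$ of $k$-algebras. Consequently the blocks of $k\overline{G}$ correspond bijectively with those $G$-blocks $B$ satisfying $e_B\cdot e_N=e_B$, equivalently those $B$ covering the trivial block of $N$, and under this bijection ordinary and Brauer characters agree. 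The hypothesis $\overline{B}\subseteq B$ precisely says that some $\chi\in\Irr(\overline{B})$ inflates into $\Irr(B)$; as $\chi$ has $N$ in its kernel, this forces $B$ to cover the trivial block of $N$, so the identification above applies and yields $\Irr(B)=\Irr(\overline{B})$ and $\IBr(B)=\IBr(\overline{B})$.

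The main obstacle is the defect-group statement in (i): producing the block $\overline{B}$ is essentially formal from idempotent lifting, but identifying $D/N$ as a defect group of $\overline{B}$ requires invoking the Brauer correspondence or a careful computation with defect classes, and this is where most of the technical effort lies.
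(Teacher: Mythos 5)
The paper offers no argument for this lemma; it is quoted verbatim from Navarro's book, so there is no ``paper proof'' to compare against, and the only question is whether your sketch is sound. Your treatment of part (ii) is essentially correct: $e_N=\frac{1}{|N|}\sum_{n\in N}n$ is a central idempotent, $e_N\cdot kG\cong k\overline{G}$, the blocks of $G$ covering the principal block of $N$ are exactly those absorbed by $e_N$, and the containment hypothesis pins $B$ down to be one of these via the inflated character having $N$ in its kernel. You also candidly flag that the defect-group computation in (i) is the real work and that you are only sketching it, which is fair.

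There is, however, a genuine error in part (i): the claim that ``$\pi$ induces a bijection between primitive central idempotents of $kG$ and those of $k\overline{G}$'' is false. Idempotent lifting through the nilpotent ideal $\ker\pi=I_N\cdot kG$ does let you lift any idempotent of $k\overline{G}$ to an idempotent of $kG$, but that lift need not be \emph{central}, and the restriction $\pi\colon Z(kG)\to Z(k\overline{G})$ is not surjective in general. Concretely, take $G=\Sym(3)$, $\ell=3$, $N=O_3(G)\cong C_3$, $\overline{G}\cong C_2$. Then $kG$ has a single $3$-block $B$ with $e_B=1$, while $k\overline{G}\cong kC_2$ is semisimple of characteristic $3$ and has two blocks; and indeed $\pi(Z(kG))$ is spanned by $1$ alone (the class sum of transpositions maps to $3\bar t=0$), a proper subalgebra of $Z(kC_2)$. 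So $\pi(e_B)=1$ is \emph{not} primitive in $k\overline{G}$, and the two blocks of $\overline{G}$ are both contained in the unique block of $G$. Fortunately the statement you are proving only asserts \emph{existence} of a suitable $\overline{B}$ inside $B$, which survives: $\pi(e_B)$ is a nonzero central idempotent of $k\overline{G}$ because $\ker\pi$ is nilpotent and $e_B\notin\ker\pi$, hence $\pi(e_B)$ dominates at least one block idempotent of $\overline{G}$. You should replace the bijection claim with this weaker (and correct) observation, and then the remaining gap is exactly the one you already acknowledge, namely identifying $D/N$ as a defect group of that $\overline{B}$, which requires an argument via the Brauer homomorphism or the defect-class characterisation.
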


\begin{lemma}[\cite{Knorr}]\label{Knorr}
Let $B$ be a block of $G$ covering a block $b$ of $N$. If $P$ is a defect group of $b$ then there is a defect group $D$ of $B$ such that $P=D\cap N$. Conversely, if $D$ is a defect group of $B$ then $D\cap N$ is a defect group of a block $b^g$ for some $g\in G$.
\end{lemma}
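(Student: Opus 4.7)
The plan is to use the Alperin--Brou\'{e} theory of Brauer pairs to track defect groups through the covering relation, since Brauer pairs sit inside one another and restrict cleanly from $G$ to $N$. Recall that to a block $B$ of $G$ corresponds a $G$-orbit of maximal Brauer pairs $(D,e_D)$, with $D$ a defect group of $B$ and $e_D$ a block of $kC_G(D)$; analogously for blocks of $N$. The key compatibility is that if $(Q,e)$ is a $B$-Brauer pair with $Q \le N$, then $e$ covers a (well-defined up to $C_G(Q)$-conjugacy) block $f$ of $C_N(Q)$, and $(Q,f)$ is a Brauer pair for some block of $N$ covered by $B$; moreover, $B$ covers $b$ if and only if a maximal $b$-Brauer pair lies under some $B$-Brauer pair in this way.

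For the forward direction, take a maximal $b$-Brauer pair $(P,f)$. By the covering relation there is a $B$-Brauer pair $(P,e)$ lying over $(P,f)$; extend $(P,e)$ in the $B$-poset to a maximal pair $(D,e_D)$, so $D$ is a defect group of $B$ with $P \le D$. Setting $Q = D\cap N$, the compatibility restricts the intermediate $B$-pair $(Q,e_Q)$ to a $b$-Brauer pair $(Q,f_Q)$ extending $(P,f)$; maximality of $(P,f)$ then forces $Q = P$, giving $D\cap N = P$. For the converse, start with a defect group $D$ of $B$ and a maximal $B$-pair $(D,e_D)$, and set $Q = D\cap N$. Restricting along the $B$-chain to $Q$ yields a Brauer pair $(Q,f_Q)$ for some block $b'$ of $N$ covered by $B$. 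To see $(Q,f_Q)$ is maximal among $b'$-Brauer pairs, suppose $(Q',f_{Q'}) \ge (Q,f_Q)$; by covering, this lifts to a $B$-pair $(Q',e_{Q'}) \ge (Q,e_Q)$, which by maximality of $(D,e_D)$ sits inside it, so $Q' \le D$, and $Q' \le N$ by hypothesis, so $Q' \le D\cap N = Q$. Hence $Q$ is a defect group of $b'$, and since blocks of $N$ covered by $B$ form a single $G$-orbit, $b' = b^g$ for some $g\in G$.

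The main obstacle is the compatibility statement between $G$- and $N$-Brauer pairs, i.e., that each $B$-Brauer pair $(Q,e_Q)$ on a $p$-subgroup $Q\le N$ canonically determines a $b'$-Brauer pair for some block $b'$ of $N$ covered by $B$, with inclusions of pairs in the $G$-poset restricting to inclusions in the $N$-poset. This rests on standard block-covering facts in centralisers (since $C_N(Q) = C_G(Q)\cap N \trianglelefteq C_G(Q)$, Clifford theory for blocks applies) but is the technical crux of the argument. An alternative, more elementary route reduces first to $b$ being $G$-invariant via Fong--Reynolds, notes $G = N\cdot N_G(P)$ by a Frattini argument on conjugates of the defect group, and then invokes Brauer's first main theorem to identify the defect groups; this avoids the full Brauer-pair machinery at the cost of a more fragmented argument.
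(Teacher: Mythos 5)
The paper does not prove this lemma---it is cited directly from Kn\"orr's paper---so there is no internal proof to compare against; I will assess the argument on its own terms.

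The Brauer-pair framework you chose is a legitimate modern route to Kn\"orr's theorem, and parts of the ``key compatibility'' you invoke are genuinely routine: if $(Q,e)$ is a $B$-Brauer pair with $Q\le N$ and $e$ covers a block $f$ of $C_N(Q)$, then the unique block $b'$ of $N$ with $(Q,f)$ a $b'$-Brauer pair is covered by $B$. (Indeed, $e_B e_{b'}=0$ would give $0=\mathrm{br}_Q(e_B)\,\mathrm{br}_Q(e_{b'})\,ef=ef$, contradicting $ef\ne0$.) But the two steps on which your argument actually pivots are not of this routine type, and both are left unjustified.

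In the forward direction you assert that ``by the covering relation there is a $B$-Brauer pair $(P,e)$ lying over $(P,f)$.'' That requires $\mathrm{br}_P(e_B)\neq 0$, i.e., that the defect group $P$ of $b$ is contained in a defect group of $B$. This is not a formal consequence of $e_B e_b\neq 0$: $\mathrm{br}_P$ is multiplicative and kills cross-terms, so $e_B e_b\neq 0$ does not by itself force $\mathrm{br}_P(e_B)\,\mathrm{br}_P(e_b)\neq 0$. The containment of $P$ in a defect group of $B$ is exactly Kn\"orr's lemma and is roughly as deep as the statement you are trying to prove. Symmetrically, in the converse direction the step ``by covering, this lifts to a $B$-pair $(Q',e_{Q'})\ge (Q,e_Q)$'' asserts that an inclusion of Brauer pairs in the $N$-poset can be raised to an inclusion in the $G$-poset extending the given $(Q,e_Q)$; again this presupposes $\mathrm{br}_{Q'}(e_B)\neq 0$ and a compatible choice of $e_{Q'}$, neither of which is established. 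You flag ``the compatibility'' as the technical crux, but as stated it contains essentially the whole content of the theorem, so the proof as written begs the question rather than proving it.

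A clean way to close the gap is to separate the argument: first prove Kn\"orr's lemma (a defect group of a covered block embeds in a defect group of the covering block---this is usually done with vertices and relative trace, not Brauer pairs); then your converse argument gives that $D\cap N$ is a defect group of a covered block; and the forward direction follows by a counting argument, since $P\le D_0\cap N$ for some defect group $D_0$ of $B$ with $D_0\cap N$ a defect group of a $G$-conjugate of $b$, hence of the same order as $P$. Your sketched alternative via Fong--Reynolds, the Frattini factorisation $G=N\,N_G(P)$, and Brauer's first main theorem is in the same spirit as Kn\"orr's original argument, but as written it is also only a signpost; the step that extracts the defect-group equality from the first main theorem still needs to be carried out.
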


\begin{lemma}[Fong's First Reduction {\cite[Thm 7.4.2]{Craven}}]\label{Fong1}
Let $b$ be a block of $N$ and define the inertia subgroup of $b$ in $G$ as $T=\{g\in G\mid b^g=b\}$. There is a correspondence between blocks of $G$ covering $b$ and blocks of $T$ covering $b$, with corresponding blocks being Morita equivalent and having isomorphic defect groups.
\end{lemma}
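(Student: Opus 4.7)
The plan is to establish the bijection explicitly via induction of block idempotents and to realise the Morita equivalence using a concrete induction bimodule. Let $e \in Z(kN)$ denote the block idempotent of $b$. The conjugation action of $G$ on $Z(kN)$ permutes the primitive central idempotents, and by definition $T$ is the stabiliser of $e$; hence $e$ commutes with every element of $kT$, so $e \in Z(kT)$, and the sum $e^G := \sum_{g \in [G/T]} g e g^{-1}$ of distinct $G$-conjugates is a central idempotent of $kG$. Blocks of $T$ covering $b$ then correspond to primitive central idempotents of $kT$ dividing $e$, and blocks of $G$ covering $b$ correspond to primitive central idempotents of $kG$ dividing $e^G$.

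For the bijection, given a primitive central idempotent $f$ of $kT$ dividing $e$, I would set $f^G := \sum_{g \in [G/T]} g f g^{-1}$. Centrality and idempotency follow from the orthogonality of distinct $G$-conjugates $g f g^{-1}$: for $g \notin T$, $f^g$ lies above $e^g \neq e$ in $Z(kN)$, so $f^g \cdot f = 0$. Primitivity of $f^G$ follows because any decomposition $f^G = F_1 + F_2$ into orthogonal central idempotents of $kG$ would, on multiplication by $f$, induce a corresponding decomposition of $f$ in $Z(kT)$, contradicting primitivity of $f$. Surjectivity is obtained by sending a primitive central idempotent $F$ of $kG$ dividing $e^G$ to $Fe \in Z(kT)$, which decomposes as a sum of $G$-conjugate primitive central idempotents of $kT$, all mapping back to $F$ under $f \mapsto f^G$; distinct choices within the same $T$-orbit give the same block, so the map is well defined and bijective.

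For the Morita equivalence between $B_T := kTf$ and $B_G := kGF$, where $F = f^G$, the key object is the bimodule $M := kGf$ with its natural $(B_G, B_T)$-structure (the left $kG$-action factors through $kGF$ because $Ff = f$, and right multiplication by $kTf$ preserves $kGf$). The coset decomposition $kG = \bigoplus_{g \in [G/T]} g \cdot kT$ as a right $kT$-module yields $M \cong (kTf)^{[G:T]}$ as a right $B_T$-module, so $M$ is a progenerator, and the orthogonality relations among the $g f g^{-1}$ allow one to identify $\mathrm{End}_{B_T}(M)^{\mathrm{op}}$ with $B_G$ via the left-multiplication map; Morita's theorem then delivers the equivalence. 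Defect groups are preserved because the equivalence is induced by an induction bimodule admitting a compatible diagonal action of a defect group of $B_T$, making it a basic Morita equivalence in the sense of Puig. The main obstacle I expect is the primitivity argument for $f^G$ and the endomorphism-ring computation in this last paragraph, both of which hinge on the careful orthogonality bookkeeping among $G$-conjugates of $f$ ensured by the stabiliser condition defining $T$; once these are in place, the defect-group statement follows essentially formally.
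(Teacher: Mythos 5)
The paper does not prove this lemma; it is quoted verbatim from Craven's book \cite[Thm 7.4.2]{Craven} as background, so there is no internal proof to compare against. Your proposal is the standard Fong--Reynolds argument and is therefore the same approach as the cited source, but it has a genuine gap at the crux of both the bijection and the Morita equivalence.

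The missing ingredient is the identity $ekGe = ekT$. This follows from the orthogonality you do observe: for $g\in G\setminus T$ one has $ege = e\,({}^{g}e)\,g = 0$ since $e$ and ${}^{g}e$ are distinct primitive central idempotents of $kN$, while for $g\in T$ one has $ege = eg$; hence $ekGe = \sum_{g\in T} k\,eg = ekT$. Without this, your primitivity argument does not close: you obtain orthogonal idempotents $fF_1, fF_2$ that commute with $kT$, but there is no reason given that they lie \emph{in} $kT$ (they are a priori only in $kG$), so you cannot conclude they contradict primitivity of $f$ in $Z(kT)$. The identity $ekGe=ekT$ forces $fF_i = f e F_i e f \in ekGe = ekT$, and then the argument goes through. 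The same identity is what drives your endomorphism-ring claim: from $f = ef = fe$ one gets $fkGf = f(ekGe)f = fkTf = kTf$, whence $FkG = \sum_{g,h\in[G/T]} {}^{g}\!f\,kG\,{}^{h}\!f = \sum_{g,h} g\,(fkGf)\,h^{-1} \cong M_{[G:T]}(kTf)$, which is the Morita equivalence. As written, the assertion that "the orthogonality relations allow one to identify $\mathrm{End}_{B_T}(M)^{\mathrm{op}}$ with $B_G$" skips exactly this computation. Your final point on defect groups is acceptable as a sketch (the bimodule $kGf$ is a direct summand of the permutation $k[G\times T]$-module $kG$ and has twisted-diagonal vertex $\Delta D$ for $D$ a defect group of $B_T$), but it depends on the equivalence having first been established correctly.
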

Note that $N\leq T$ and, since covered blocks are conjugate, $b$ is the unique block of $N$ covered by these blocks of $T$.

\begin{lemma}[Fong's Second Reduction {\cite[Thm 7.4.4]{Craven}}]\label{Fong2}
Let $B$ be a block of $G$ with a defect group $D$, such that $D\cap N=1$. Then there is a block $\hat{B}$ of some central extension of $G/N$ such that $B$ and $\hat{B}$ are Morita equivalent.
\end{lemma}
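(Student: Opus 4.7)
The plan is to reduce to the case where the block $b$ of $N$ covered by $B$ is $G$-invariant and has trivial defect group, and then lift the unique simple $kN$-module in $b$ to an ordinary representation of a central extension of $G/N$.

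First, let $b$ be a block of $N$ covered by $B$, which is unique up to $G$-conjugacy. By Fong's First Reduction (Lemma \ref{Fong1}) we may replace $G$ by the inertia subgroup $T=\{g\in G\mid b^g=b\}$: this preserves Morita equivalence and defect groups, and any central extension of $T/N$ embeds into one of $G/N$ (one can enlarge it to a central extension of $G/N$ as needed). We may therefore assume $b$ is $G$-invariant. By Knörr's Lemma (Lemma \ref{Knorr}), $D\cap N$ is then a defect group of $b$ itself, so the hypothesis $D\cap N=1$ forces $b$ to be of defect zero. Consequently $b$ is a matrix algebra over $k$ with a unique simple module $V$.

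Next, I would use the $G$-invariance of $V$ to obtain a projective representation $\rho\colon G\to\GL(V)$ extending the module structure on $N$, with associated $2$-cocycle $\alpha\in Z^2(G/N,k^\times)$ (pulled back along $G\to G/N$ it becomes a coboundary on $G$). Choose a central extension $1\to Z\to\tilde{Q}\to G/N\to 1$ through which the inflation of $\alpha$ becomes a coboundary; a stem extension determined by the class $[\alpha]$, or a Schur cover of $G/N$, works. Then $\rho$ lifts to an ordinary representation of the pullback $\tilde{G}$ of $\tilde{Q}$ along $G\to G/N$, and factoring out the copy of $N$ sitting inside $\tilde{G}$ gives an ordinary representation of $\tilde{Q}$. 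Define $\hat{B}$ to be the block of $k\tilde{Q}$ containing the resulting characters.

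Finally I would exhibit the Morita equivalence between $B$ and $\hat{B}$. The idea is that $V$, equipped with its twisted $\tilde{Q}$-action, becomes a $(B,\hat{B})$-bimodule, and tensoring with $V$ should implement the equivalence; equivalently, $B$ is isomorphic to $\mathrm{End}_k(V)\otimes_k k^\alpha[G/N]$, and the twisted group algebra $k^\alpha[G/N]$ is Morita equivalent to a block summand of $k\tilde{Q}$, namely $\hat{B}$. This is the classical source-algebra refinement of Clifford theory and is the main technical obstacle: once the cocycle $\alpha$ is controlled correctly one sees that the defect groups of $\hat{B}$ are images of $D$ and agree with those of $B$, but one must verify carefully that the twisted bimodule multiplication is well defined and is compatible with the block idempotents on both sides.
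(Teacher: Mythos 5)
The paper itself does not prove this lemma; it only cites Craven's textbook [Thm 7.4.4]. Your sketch follows the standard proof of Fong's second reduction: pass via Fong's first reduction to the case that the covered block $b$ of $N$ is $G$-invariant, note that $D\cap N=1$ forces $b$ to have defect zero so $b\cong\mathrm{End}_k(V)$ for its unique simple module $V$, extract a $2$-cocycle $\alpha\in Z^2(G/N,k^\times)$ from the $G$-invariance of $V$, and show that $kGe_b\cong\mathrm{End}_k(V)\otimes_k k_\alpha[G/N]$ as $k$-algebras, after which $k_\alpha[G/N]$ is identified with a direct sum of blocks of a suitable finite central extension $\tilde{Q}$ of $G/N$. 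This is the right strategy and is essentially what the cited reference does.

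However, one intermediate step as written is incorrect. You say $\rho$ lifts to an ordinary representation $\tilde\rho$ of the pullback $\tilde{G}$ of $\tilde{Q}$ along $G\to G/N$, and that ``factoring out the copy of $N$ sitting inside $\tilde{G}$ gives an ordinary representation of $\tilde{Q}$.'' This fails: $\tilde\rho$ restricted to the canonical copy of $N$ in $\tilde{G}$ is the original simple $kN$-module $V$, which is nontrivial, so $\tilde\rho$ does not factor through $\tilde{G}/N\cong\tilde{Q}$. Consequently the ``resulting characters'' of $\tilde{Q}$ are not defined, and $\hat{B}$ cannot be obtained this way. What you actually want is precisely what you indicate in your last paragraph: identify $k_\alpha[G/N]$ with $e_\lambda k\tilde{Q}$, where $\lambda$ is the central character of $Z=\ker(\tilde{Q}\to G/N)$ determined by the class $[\alpha]$, and then let $\hat{B}$ be the block of $k\tilde{Q}$ corresponding to $B$ under $kGe_b\cong\mathrm{End}_k(V)\otimes_k e_\lambda k\tilde{Q}$.

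Two further imprecisions worth noting. Your parenthetical claim that a central extension of $T/N$ can be enlarged to a central extension of $G/N$ is not valid in general (a central extension of a proper subgroup of $G/N$ is not a central extension of $G/N$); the precise statement of Fong's second reduction is about a central extension of $T/N$, or one assumes $b$ is $G$-invariant outright, which is the situation in the paper's application since $N$ is a maximal normal subgroup and $D\neq1$. Second, the block $B$ itself is not isomorphic to $\mathrm{End}_k(V)\otimes_k k_\alpha[G/N]$ but to $\mathrm{End}_k(V)\otimes_k B'$ for a single block $B'$ of $k_\alpha[G/N]$, since several blocks of $G$ may cover $b$; it is $kGe_b$, not $B$, that is isomorphic to the full tensor product.
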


\subsection{Tame Blocks}

As described in \cite{Olsson}, for any block $B$ with dihedral, semidihedral, or generalised quaternion defect groups of order $2^n$ the numbers of irreducible ordinary and Brauer characters are $2^{n-2}+3\leq k(B)\leq2^{n-2}+5$ and $1\leq l(B)\leq3$ respectively. There are four ordinary characters of height zero (the first four rows in each of the decomposition matrices in Section \ref{classifications}) and $2^{n-2}-1$ height one characters (the repeated row), and any additional characters have height $n-2$; blocks with dihedral defect groups have none of these `large height' characters, semidihedral blocks have at most one, and generalised quaternion blocks have at most two. Semidihedral defect groups are of order at least $2^4$, so the `large height' is in particular greater than one. 

If $B$ has dihedral defect groups then its decomposition matrix, hence Morita equivalence class, can be determined -- out of those in Theorem \ref{classification} and including $(3B)$ as a possibility for any $n$ -- using only its ordinary character degrees, such as by checking the following properties: if the degree of the height one character is the largest, then $B$ is in one of $(1A)$, $(2A)$, or $(3A)$, depending on whether it has one, two, or at least three distinct height zero degrees respectively; otherwise, if the largest degree has height one then $B$ is in class $(2B)$; if the largest degree is the sum of the other three height zero degrees then $B$ is in class $(3K)$; otherwise $B$ is in class $(3B)$. Therefore given a block of a finite group with dihedral defect groups, its ordinary character degrees can be obtained using GAP or Magma and its Morita equivalence class can always be deduced. This is sometimes, but not always, possible for a block with semidihedral defect groups.

\section{Reduction to Quasi-Simple Groups}\label{reductions}

A quasi-simple group is a perfect central extension of a simple group, i.e., a group $G$ such that $G=[G,G]$ and $G/Z(G)$ is simple. For every non-abelian finite simple group $S$, the largest quasi-simple group $G$ such that $G/Z(G)\cong S$ is called the Schur cover of $S$ and is unique up to isomorphism. 

In order to reduce our problem from all groups to quasi-simple groups, we first show a Morita equivalence for odd index normal subgroups. Note that this is not true for Klein four defect groups, as we require at least one height one character.

\begin{lemma}\label{oddindex}
Let $N\trianglelefteq G$ be finite groups with $\abs{G:N}$ odd, and let $b$ be a block of $N$ with dihedral or semidihedral defect groups (of order at least $8$). Then each block of $G$ covering $b$ is Morita equivalent to $b$.
\end{lemma}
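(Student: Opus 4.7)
The plan is to combine Fong's First Reduction with a rigidity argument forced by the structure of the defect group. By Fong's First Reduction (Lemma \ref{Fong1}), each block of $G$ covering $b$ is Morita equivalent to a block of the inertia subgroup $T = \{g \in G : b^g = b\}$ covering $b$, and $[T:N]$ divides $[G:N]$ and so is odd; replacing $G$ by $T$ we may therefore assume $b$ is $G$-invariant. Let $B$ be any block of $G$ covering $b$. If $D$ is a defect group of $b$, then by Knörr's Lemma (Lemma \ref{Knorr}) some defect group $D'$ of $B$ satisfies that $D' \cap N$ is a defect group of a $G$-conjugate of $b$, hence of $b$ itself by $G$-invariance. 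Since $D'$ is a $2$-group and $[G:N]$ is odd, $D' = D' \cap N \subseteq N$ and $|D'| = |D|$, so we may take $D' = D$.

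The key structural input is that for $D$ dihedral of order $2^n\geq 8$ or semidihedral of order $2^n\geq 16$, $\Aut(D)$ is a $2$-group (of order $2^{2n-3}$). Hence $N_G(D)/C_G(D) \hookrightarrow \Aut(D)$ is a $2$-group, while its further quotient $N_G(D)/N_N(D)C_G(D)$ is a subquotient of the odd-order group $G/N$ (using $N_G(D)\cap N = N_N(D)$). A group that is both a $2$-group and of odd order is trivial, giving $N_G(D) = N_N(D)\cdot C_G(D)$; every automorphism of $D$ induced by $G$ is already induced by $N$, and in particular the inertial quotients of $b$ and $B$ coincide.

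To upgrade this local rigidity to a Morita equivalence the plan is to invoke Clifford theory in the coprime-to-$\ell$ setting together with a source-algebra argument. The identity $N_G(D) = N_N(D)C_G(D)$ identifies the Brauer correspondents of $b$ in $N_N(D)$ and of $B$ in $N_G(D)$ up to Morita equivalence, and the matching of inertial quotients together with the structure of tame blocks constrains any $G/N$-action on simple $b$-modules to be trivial: an odd-order permutation of three simples is incompatible with the $2$-group structure of the group of self-equivalences of the tame block $b$ that fix $D$ pointwise. Hence each $\chi \in \Irr(b) \cup \IBr(b)$ is $G$-invariant, and a standard extension argument (with any obstruction in $H^2(G/N,k^\times)$ trivialisable after Morita equivalence because $|G/N|$ is odd) shows $B$ and $b$ have identical decomposition matrices. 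For dihedral defect groups this alone determines the Morita equivalence class by Theorem \ref{classification}, closing that case.

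The main obstacle is the semidihedral case, where the decomposition matrix is not always a complete Morita invariant---for instance, $(3B_1)$ and $(3D)$ share a decomposition matrix, as do the parameterised pairs among $(2A_1),(2B_2),(2A_2),(2B_1)$. For these blocks the comparison must be made at the level of Ext-quivers or source algebras, verifying that the Clifford construction relating $B$ to $b$ preserves the finer data; the triviality of the odd-order twist again follows from the rigidity above. The hypothesis $|D|\geq 8$ enters precisely to guarantee a height-one character of $b$, the anchor for these finer invariants, and fails for Klein four defect groups where $\Aut(V_4)\cong\Sym(3)$ is not a $2$-group---consistent with the paper's remark that the lemma does not hold for $D=V_4$.
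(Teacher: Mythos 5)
Your opening moves match the paper: Fong's First Reduction to assume $b$ is $G$-invariant, and noting that $B$ and $b$ share a defect group $D$. But from there you take a genuinely different route, and it has two real gaps.

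First, the passage from ``$\Aut(D)$ is a $2$-group, hence $N_G(D)=N_N(D)C_G(D)$'' to ``the $G/N$-action on $\Irr(b)\cup\IBr(b)$ is trivial'' is asserted rather than proved. Nothing rules out, a priori, an order-$3$ element of $G/N$ cyclically permuting the three simple $b$-modules: the group of self-equivalences of $b$ that fix $D$ pointwise is not a $2$-group in general, and the solvability of $\mathcal{T}(b)$ (which the paper does invoke elsewhere, but via \cite{Boltje-Kessar-Linckelmann}) is not the same as being a $2$-group. You would need a concrete argument here, and the natural one is exactly what the paper does instead: a direct count. Since $B$ and $b$ have the same defect group, they have the same number $2^{n-2}-1$ of height-one characters; these all have equal degree within each block, so by Lemma \ref{inductionsimple} they are either all $G$-invariant or none are, and the latter would deflate the count in $B$. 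Invariance of height-one characters forces multiple blocks of $G$ over $b$, and Lemma \ref{coveringblock} then forces the height-zero characters of $b$ to be $G$-invariant too. This is cleaner than the automorphism-group rigidity you propose, and it actually closes the loop.

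Second, and more seriously, you identify the obstruction in the semidihedral case -- that the decomposition matrix is not a complete Morita invariant, cf.\ $(3B_1)$ versus $(3D)$ and the parametrised pairs among the two-simple classes -- and then defer it: ``the comparison must be made at the level of Ext-quivers or source algebras, verifying that the Clifford construction \dots preserves the finer data.'' That sentence names the work still to be done rather than doing it. The paper sidesteps this entirely: once restriction gives a degree-preserving bijection $\Irr(B)\to\Irr(b)$, it cites \cite[Thm 2]{Schmid} to get the Morita equivalence directly, with no detour through decomposition matrices or Ext-quivers and hence no exposure to the non-uniqueness issue you flag. Your appeal to Theorem \ref{classification} to finish the dihedral case is also uncomfortably close to circular, since the full form of that theorem is downstream of this very lemma (the underlying fact that the decomposition matrix determines the class for dihedral blocks is due to Erdmann and Eisele, not this paper's Theorem \ref{classification}, and should be cited as such). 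As written, your argument proves the dihedral case modulo the rigidity gap, and leaves the semidihedral case open.
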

\begin{proof}
Let $B$ be a block of $G$ covering $b$; note that these blocks have the same defect groups. By Lemma \ref{Fong1} we may assume without loss of generality that $b$ is $G$-invariant. We can also assume by the Feit-Thompson theorem that $\abs{G:N}=p$, an odd prime. 

For each $\chi\in\Irr(B)$, by Clifford's Theorem either $\chi$ restricts irreducibly to $N$ or $\chi{\downarrow}_N=\phi_1+\dots+\phi_p$ for some $\phi_1,\dots,\phi_p\in\Irr(b)$ that are conjugate in $G$; note that each constituent of $\chi{\downarrow}_N$ has the same height as $\chi$. Since the height one characters of $b$ have the same degrees, as do those of $B$, if any one of them is $G$-invariant then they all are. By Lemma \ref{inductionsimple} there is a unique $\chi\in\Irr(G)$ covering $\phi\in\Irr(b)$ if and only if $\phi$ is not $G$-invariant; in this case $\chi$ covers several characters of $b$. But $b$ and $B$ have the same number of height one characters, so all those of $b$ must be $G$-invariant and in bijection with those of $B$. Since each height one character of $b$ has more than one character of $G$ above it, there must be multiple blocks of $G$ covering $b$. Then by Lemma \ref{coveringblock} each height zero character of $b$ also has multiple characters of $G$ above it, so must also be $G$-invariant.

Therefore restriction induces a bijection between $\Irr(B)$ and $\Irr(b)$, so $B$ and $b$ are Morita equivalent by \cite[Thm 2]{Schmid}. 
\end{proof}

Now we give a reduction to quasi-simple groups for any of the classes with three simple modules, based on Brauer's \cite{Brauer} and Olsson's \cite{Olsson} analyses of blocks with dihedral and semidihedral defect groups respectively.

\begin{proposition}\label{reduction}
If $B$ is a block of a finite group $G$ with dihedral or semidihedral defect groups (of order at least $8$) and $l(B)=3$, then $B$ is Morita equivalent to a block of a finite quasi-simple group.
\end{proposition}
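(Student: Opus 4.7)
The plan is a minimal counterexample argument: assume $G$ is of smallest order with a block $B$ satisfying the hypotheses but not Morita equivalent to any block of a finite quasi-simple group, and write $D$ for a defect group of $B$. First I constrain the normal structure of $G$. By Lemma \ref{oddindex}, $G$ has no proper normal subgroup of odd index: if $N \trianglelefteq G$ had $[G:N]$ odd, then $D \leq N$ since $D$ is a $2$-group, and by Lemma \ref{Knorr} $D$ is a defect group of some block $b$ of $N$ covered by $B$; Lemma \ref{oddindex} gives $B$ Morita equivalent to $b$, so minimality applied to $N$ produces a block of a quasi-simple group Morita equivalent to $b$, and hence to $B$, contradicting the choice of $G$. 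Next, Fong's First Reduction (Lemma \ref{Fong1}) lets me assume the block $b_0$ of $O_{2'}(G)$ covered by $B$ is $G$-invariant. Since $O_{2'}(G)$ has odd order, $b_0$ has trivial defect, so Fong's Second Reduction (Lemma \ref{Fong2}) gives $B$ Morita equivalent to a block of a central extension of $G/O_{2'}(G)$; by minimality, $O_{2'}(G) \leq Z(G)$.

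Next I analyse the layer $E(G)$, the central product of the components of $G$. Let $L$ be a component and $b_L$ a $G$-invariant block of $L$ covered by $B$ (via Lemma \ref{Fong1} applied inside $L$); by Lemma \ref{Knorr}, $b_L$ has defect group $D \cap L$. A dihedral or semidihedral $2$-group of order $\geq 8$ has cyclic centre of order $2$ and is not a nontrivial central product of two proper subgroups, so $D$ meets at most one component nontrivially. The other components cover only blocks of defect zero, which can be absorbed by further Fong reductions; by minimality $E(G) = L$ is a single quasi-simple component, and $F^\ast(G) = LZ(G)$ (using that $O_2(G)$ lies in every defect group of $B$ and centralises $L$, so $O_2(G) \leq Z(L)$).

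The final and hardest step is to conclude $B$ is Morita equivalent to $b_L$ itself, so that the required quasi-simple group is $L$. The quotient $G/LZ(G)$ embeds in $\Out(L)$; since $G$ has no proper normal subgroup of odd index, this quotient is a $2$-group. Clifford theory applied to the tower $LZ(G) \trianglelefteq G$ then forces $b_L$ and $B$ to have compatible character counts, subject to the rigid constraints $l(B) = 3$ and $k(B) \in \{2^{n-2}+3, 2^{n-2}+4, 2^{n-2}+5\}$ recalled at the start of Section \ref{preliminaries}. Combining these constraints with the detailed analyses of Brauer \cite{Brauer} and Olsson \cite{Olsson} for blocks with dihedral and semidihedral defect groups and three simple modules, I would argue that the only way a $2$-group of outer automorphisms can lift $b_L$ to a block $B$ with the same tame invariants is for the lift to induce a Morita equivalence $B \sim b_L$, yielding the contradiction. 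The principal obstacle is this last step, which requires careful tracking of heights and of how a $2$-group of outer automorphisms could alter the simple modules and Ext-quiver of $b_L$; the $l(B)=3$ hypothesis is essential here, since extra simple modules would otherwise allow an outer automorphism to permute them nontrivially without breaking the defect group structure.
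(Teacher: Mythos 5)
Your proposal takes a genuinely different route from the paper: you work top-down through $F^*(G)$ and the component structure, whereas the paper iterates "take a maximal normal subgroup $N\trianglelefteq G$, control the defect group of the covered block, repeat." Both routes have to confront the same obstruction, and that is exactly where your proposal has a real gap.

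The central technical fact the paper proves — and which you never establish — is that for \emph{any} normal subgroup $N\trianglelefteq G$, the defect group $P=D\cap N$ of a covered block satisfies $P\in\{1,D\}$. The paper gets this from $l(B)=3$: by Brauer's case (aa) (dihedral) and Olsson's case (aa) (semidihedral), the hypothesis $l(B)=3$ forces the existence of order-$3$ elements of $G$ permuting the non-identity elements of Klein four subgroups of $D$, which makes all involutions of $D$ fall into a single $G$-conjugacy class; combined with the strong closure of $P$ in $D$ this yields $P=1$ or $P=D$. You cite the $k(B)$, $l(B)$ constraints from Brauer and Olsson, but you do not use this fusion argument, and without it you cannot show that $D\leq L$. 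That claim is needed for $B$ and $b_L$ to have any hope of being Morita equivalent (different defect groups preclude it), and indeed it is needed to see that the $2$-group $G/LZ(G)$ you construct is actually trivial (via Lemma \ref{evenindex}(iii) once $b_L$ is $G$-invariant). As written, the step where $D$ lands inside $L$ is asserted, not proved.

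The second, related gap is the final step, which you frankly flag as a heuristic ("I would argue..."). The paper's resolution there is genuinely hard and not something vague Clifford-theoretic bookkeeping will reproduce: for a non-abelian simple quotient $G/N$, the paper invokes K\"ulshammer's subgroup $G[b]$ and the theorem of Boltje--Kessar--Linckelmann that the trivial-source Picard group $\mathcal{T}(b)$ is solvable whenever $\Aut(P)$ is solvable (true here since $P$ is dihedral or semidihedral), which forces $G/G[b]$ to be solvable and hence trivial, giving the Morita equivalence via K\"ulshammer's theorem. You sidestep the non-abelian-simple-quotient case by passing directly to the layer and observing $G/LZ(G)$ is a $2$-group (Schreier), which is a nice shortcut — but then you must rule out a nontrivial $2$-group quotient, and that brings you back to needing $D\leq L$, i.e.\ back to the missing fusion argument. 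There are also smaller loose ends (the claim $O_2(G)\leq Z(L)$, and absorbing defect-zero blocks from the other components by "further Fong reductions") that would need to be spelled out, but the missing dichotomy $D\cap N\in\{1,D\}$ is the substantive hole.
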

\begin{proof}
Let $N\trianglelefteq G$ be a maximal normal subgroup and $b$ be a block of $N$ covered by $B$. If $P$ is a defect group of $b$ then by Lemma \ref{Knorr} there is a defect group $D$ of $B$ such that $P=D\cap N$. Since $N$ is a normal subgroup, $P$ is strongly closed in $D$ with respect to $G$; that is if $x\in P$ and $g\in G$ such that $x^g\in D$, then $x^g\in P$. 

First, if $B$ has dihedral defect groups, let $X_1,X_2$ be representatives of the two conjugacy classes of Klein four-groups in $D$. Since $l(B)=3$, as in \cite[Section 4]{Brauer} case (aa), there are elements of order $3$ in $G$ that permute the non-trivial elements of $X_1$ and $X_2$ respectively. This means that the involution in $X_1\cap X_2=Z(D)$ is $G$-conjugate to elements in both non-central conjugacy classes of involutions in $D$; hence all involutions in $D$ -- of which there are more than $\abs{D}/2$ -- are $G$-conjugate. Therefore, since $P$ is strongly closed in $D$ with respect to $G$, either $P=1$ or $P=D$.

If $B$ has semidihedral defect groups, then there are three conjugacy classes of subgroups of order $4$ in $D$: one of Klein four-groups, one of cyclic groups, and another single cyclic group $C$ contained in the index-$2$ cyclic subgroup of $D$. Since $l(B)=3$, as in case (aa) of \cite{Olsson}, again there is an element of order $3$ in $G$ permuting the non-trivial elements of a Klein four-group. Hence if $\abs{P}>1$ then $P$ contains all the involutions in $D$, and so contains the index-$2$ dihedral subgroup of $D$; in particular $C\leq P$. Also, by \cite[Lemma 2.4]{Olsson} we see that $C$ is $G$-conjugate to all of the cyclic groups of order $4$ in $D$, which are therefore also in $P$; this is then enough to generate $D$.

So for both dihedral and semidihedral defect groups we have that either $P=1$ or $P=D$. If $P=1$ then by Lemma \ref{Fong2} there is a block of a central extension of $G/N$ Morita equivalent to $B$.

If $P=D$, first we can assume by Lemma \ref{Fong1} that $b$ is $G$-invariant. By Lemma \ref{evenindex}, since the defects are the same, $\abs{G:N}\neq2$, and if $\abs{G:N}$ is odd then $B$ and $b$ are Morita equivalent by Lemma \ref{oddindex}.

If $G/N$ is non-abelian simple, then define $G[b]$ as in \cite{Kulshammer} as the set of $g\in G$ such that the algebra automorphism $b\rightarrow b,\ x\mapsto g^{-1}xg$ is an inner automorphism of $b$; this is a normal subgroup of $G$ containing $N$. As in \cite[Ex. 1.2]{Linckelmann-Livesey} automorphisms of $b$ can be considered as $b$-$b$-bimodules, so (their images in $\mathrm{Out}(b)$) are elements of the Picard group of $b$, and those induced by group automorphisms are elements of $\mathcal{T}(b)$, the subgroup of the Picard group consisting of bimodules with trivial source. By \cite[Thm 1.1]{Boltje-Kessar-Linckelmann} if $\Aut(P)$ is solvable -- as is the case when $P$ is dihedral or semidihedral -- then $\mathcal{T}(b)$ is also solvable, and hence so is its subgroup $G/G[b]$. Then since $G/N$ is non-abelian simple, $G[b]=G$. Therefore $B$ and $b$ are Morita equivalent by \cite[Thm 7]{Kulshammer}.

Repeatedly taking maximal (non-central when possible) normal subgroups in this way gives a group whose only normal subgroups are central -- which is non-abelian and therefore quasi-simple -- with a block Morita equivalent to $B$.
\end{proof}

We now give two less general reductions, each for a specific semidihedral class with two simple modules. Note that each proof essentially builds on the previous ones.

\begin{proposition}\label{reduction2}
If $B$ is a block of a finite group $G$ with semidihedral defect groups in either of the $(2B_4)$ Morita equivalence classes, then $B$ is Morita equivalent to a block of a finite quasi-simple group.
\end{proposition}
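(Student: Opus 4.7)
The plan is to follow the template of Proposition \ref{reduction}, tailored to $l(B)=2$ and to the specific structure of class $(2B_4)$. As there, let $N \trianglelefteq G$ be a maximal normal subgroup, let $b$ be a block of $N$ covered by $B$, and set $P = D \cap N$, which by Lemma \ref{Knorr} is (conjugate to) a defect group of $b$ and is strongly closed in $D$ with respect to $G$. The technical heart of the proof is to show that $P \in \{1, D\}$; once this is in hand, the remaining argument will copy Proposition \ref{reduction} essentially verbatim. If $P = 1$, Lemma \ref{Fong2} supplies a Morita equivalent block of a central extension of $G/N$. If $P = D$, Lemma \ref{Fong1} lets us assume $b$ is $G$-invariant, Lemma \ref{evenindex}(iii) rules out $\abs{G:N} = 2$ on defect-group grounds, Lemma \ref{oddindex} disposes of odd $\abs{G:N}$, and the case $G/N$ non-abelian simple is dispatched by the solvability of $\mathcal{T}(b)$ (from \cite{Boltje-Kessar-Linckelmann}, using that $\Aut(SD_{2^n})$ is solvable) combined with Külshammer's theorem to conclude $G[b] = G$, so that $B$ and $b$ are Morita equivalent. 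Iterating produces a quasi-simple group at the bottom.

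To force $P \in \{1, D\}$, I will appeal to Olsson's case analysis of blocks with semidihedral defect group from \cite{Olsson} to read off the fusion of $D$ induced by $B$ in class $(2B_4)$. The decomposition matrix of $(2B_4)$ is completely symmetric in its two simple modules; in Olsson's list of fusion-types for $l(B)=2$ this symmetry singles out the case in which both non-cyclic conjugacy classes of involutions in $D$ are fused into a single $G$-class. Combined with strong closure of $P$ and the limited supply of conjugacy classes of subgroups of order $4$ and of maximal index-$2$ subgroups in a semidihedral group, a non-trivial $P$ will be shown to contain all involutions of $D$ and hence the entire maximal dihedral subgroup, and then by $B$-fusion of a cyclic subgroup of order $4$ lying outside that dihedral subgroup with one lying inside it, also an element generating the remaining factor — forcing $P = D$.

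The main obstacle is precisely this fusion step, since the clean Brauer/Olsson argument via a $3$-element permuting a Klein-four subgroup (used in Proposition \ref{reduction}) is unavailable when $l(B) = 2$; the required identifications of conjugacy classes of elements of $D$ must instead be extracted directly from the source-algebra data of $(2B_4)$. I expect the most delicate point to be verifying that the $(2B_4)$ fusion really does identify a cyclic-$4$ subgroup inside the maximal dihedral subgroup of $D$ with one outside it, so that a strongly closed $P$ containing all involutions is in fact all of $D$ rather than a proper subgroup. Handling the two Morita equivalence classes sharing the label $(2B_4)$ should be simultaneous, as the argument only depends on the decomposition matrix and the associated fusion, both of which they share.
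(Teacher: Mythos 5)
Your plan to force $P\in\{1,D\}$ directly via fusion does not work, and in fact the paper does \emph{not} establish this; it has to allow the intermediate case and then eliminate it by a separate structural analysis of $G/N$.

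Two specific points. First, your premise that ``the clean Brauer/Olsson argument via a $3$-element permuting a Klein-four subgroup is unavailable when $l(B)=2$'' is incorrect: the numerical data $k(B)=2^{n-2}+3$, $l(B)=2$ places $(2B_4)$ in Olsson's case (ab), and in that case there \emph{is} an element of order $3$ permuting the nontrivial elements of a Klein four-subgroup. The paper uses exactly this to conclude that a nontrivial strongly closed $P$ contains all involutions, hence the index-$2$ dihedral subgroup of $D$. What is \emph{not} available in case (ab) is the further fusion used in Proposition~\ref{reduction} (Olsson's Lemma 2.4) identifying the cyclic subgroup $C$ of order $4$ in the index-$2$ cyclic subgroup with the cyclic $4$-subgroups lying in the quaternion index-$2$ subgroup. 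That fusion is precisely what let $P$ be forced all the way up to $D$ when $l(B)=3$, and it is absent here. The symmetry of the $(2B_4)$ decomposition matrix does not supply a replacement. So your ``most delicate point'' is not merely delicate — it is false: the fusion system of $(2B_4)$ does not identify a cyclic-$4$ inside the maximal dihedral subgroup with one outside it.

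Consequently the proof must handle the genuine possibility $1<P<D$ with $P$ the dihedral index-$2$ subgroup, and this is where the paper's additional work lies, all of it missing from your outline. One assumes $b$ is $G$-invariant (Fong's first reduction) and then rules out every possible structure of $G/N$: odd order is impossible because the defects of $b$ and $B$ differ; $|G:N|=2$ is impossible because $B$ has an odd number of height-one characters so at least one must split on restriction to $N$, and since the height-one row of the $(2B_4)$ matrix is $(1,1)$ this would force both Brauer characters of $B$ to split, giving $l(b)\geq4$, a contradiction; and $G/N$ non-abelian simple is impossible because then every odd-order element of $G\setminus N$ fixes each $\phi\in\Irr(b)$ (apply Lemma~\ref{oddindex} to $N\trianglelefteq\langle g\rangle N$), so all of $\Irr(b)$ is $G$-invariant since $G/N$ is generated by odd-order elements, but then every character of $b$ splits on induction to $DN$ and the unique block $B_{DN}$ of $DN$ covering $b$ would have eight height-zero characters — impossible for a block with semidihedral defect groups. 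Only after this can one iterate down to a quasi-simple group. You should replace your fusion-based step with this case analysis.
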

\begin{proof}
Again let $N\trianglelefteq G$ be a maximal normal subgroup, let $b$ be a block of $N$ covered by $B$, with defect group $P=D\cap N$ for a defect group $D$ of $B$, and again assume by Lemma \ref{Fong1} that $b$ is $G$-invariant. Since $k(B)=2^{n-2}+3$ and $l(B)=2$ we are in case (ab) of \cite{Olsson}, and there is again an element of order $3$ in $G$ permuting the non-trivial elements of a Klein four-group. So if $\abs{P}>1$ then $P$ contains the index-$2$ dihedral subgroup of $D$. The cases $P=1$ and $P=D$ are the same as in the previous proof. 

Suppose that $1<P<D$, so $P$ is dihedral of index $2$ in $D$. Since the blocks have different defects, $\abs{G:N}$ cannot be odd. If $\abs{G:N}=2$, then as $B$ has an odd number of height one characters, one must split on restriction to $N$. As it is the sum of two distinct Brauer characters this would force both of them to split; but $l(b)\leq3$, a contradiction.

Suppose that $G/N$ is non-abelian simple. Any odd-order element $g\in G\setminus N$ does not permute the ordinary characters of $b$, by Lemma \ref{oddindex} considering $N\trianglelefteq\langle g\rangle N$. Then since $G/N$ can be generated by odd-order elements, each $\phi\in\Irr(b)$ is $G$-invariant. The unique block $B_{DN}$ of $DN$ covering $b$ has, by \cite[Ch. 10 Thm 5.10]{Karpilovsky} (see also \cite[Ex. 9.4]{Navarro}), semidihedral defect group $D$. But $\abs{DN:N}=2$ and each character of $b$ is $DN$-invariant, so must split on induction to $DN$ giving eight height zero characters of $B_{DN}$, which is impossible. Hence $P$ cannot have index $2$ in $D$.
\end{proof}

The following case is more complicated:

\begin{proposition}\label{reduction3}
If $B$ is a block of a finite group $G$ with semidihedral defect groups of order $2^n$ in either of the $(2B_1)$ Morita equivalence classes, then there is a quasi-simple group $S$ with a block $b_S$ such that either $B$ is Morita equivalent to $b_S$, or $b_S$ has dihedral defect groups in class $(3K)$ and is covered by a block of a group $S.2$ with semidihedral defect groups of order $2^n$ also in one of the $(2B_1)$ classes.
\end{proposition}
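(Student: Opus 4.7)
The strategy is to mimic the reductions in Propositions \ref{reduction} and \ref{reduction2}. Take $N\trianglelefteq G$ a maximal normal subgroup and $b$ a block of $N$ covered by $B$, assumed $G$-invariant by Lemma \ref{Fong1}, with defect groups $D$ of $B$ and $P=D\cap N$ of $b$ (Lemma \ref{Knorr}). Since $k(B)=2^{n-2}+3$ and $l(B)=2$, we are in case (ab) of \cite{Olsson}: exactly as in the proof of Proposition \ref{reduction2}, there is an element of order $3$ in $G$ permuting the non-trivial elements of a Klein four-subgroup of $D$, and strong closure of $P$ in $D$ with respect to $G$ forces $P$ to be either $1$, the index-$2$ dihedral subgroup $P_{\mathrm{dih}}$ of $D$, or $D$ itself.

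The cases $P=1$ and $P=D$ are dispatched exactly as in Proposition \ref{reduction}: $P=1$ gives a Fong 2 reduction via Lemma \ref{Fong2}, while $P=D$ forces $|G:N|$ odd or $G/N$ non-abelian simple, in each case yielding $B\sim b$ (via Lemma \ref{oddindex} or the $G[b]=G$ argument using \cite{Boltje-Kessar-Linckelmann} and \cite{Kulshammer}), so we iterate. The novel case is $P=P_{\mathrm{dih}}$, where $b$ has dihedral defect of order $2^{n-1}$. If $G/N$ were non-abelian simple, the same $G[b]=G$ argument would give $B\sim b$, contradicting $|D|>|P|$; hence $G/N$ is cyclic of prime order, and since $P<D$ forces $|G:N|$ to be even, necessarily $|G:N|=2$.

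With $|G:N|=2$, a Clifford-theoretic analysis determines the Morita class of $b$. Each $\chi\in\Irr(B)$ either restricts irreducibly to a $G$-invariant character of $b$ of the same height, or is induced from a non-$G$-invariant pair of characters of $b$ (fusion raises height by $1$, since the defect drops by $1$). Working through each possible dihedral Morita class for $b$ and computing the resulting decomposition matrix of $B$ using the induced Brauer-character structure, one finds that only class $(3K)$ reproduces the $(2B_1)$ decomposition matrix on induction; the other dihedral classes $(1A), (2A), (2B), (3A), (3B)$ either have incompatible $l$-values (so cannot give $l(B)=2$) or yield a different covering class, such as $(1A)$ from $(1A)$ and $(2A_2)$ from $(3A)$. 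Hence $b$ is in class $(3K)$.

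Since $b$ is in $(3K)$ with $l(b)=3$, Proposition \ref{reduction} applies to $(N,b)$ and produces a quasi-simple group $S$ with a block $b_S$ in $(3K)$ Morita equivalent to $b$. The corresponding group $S.2\supseteq S$ of index $2$ with a $(2B_1)$-block covering $b_S$ is constructed by carrying the ambient index-$2$ overgroup through the chain of reductions from $N$ to $S$: at each step, Fong 1, Fong 2, Lemma \ref{oddindex}, and Külshammer's theorem all extend naturally to the index-$2$ overgroup, and the covering block there is in a $(2B_1)$ class by the very same Clifford calculation used to identify $(3K)$ above. The main obstacle is this compatibility of the index-$2$ extension with each reduction step; it requires careful but essentially routine bookkeeping to show that the index-$2$ overgroup structure persists through the chain of Morita equivalences, ultimately producing a quasi-simple $S$, a block $b_S$ in $(3K)$, and a group $S.2$ with a $(2B_1)$-block of semidihedral defect of order $2^n$ covering $b_S$, as required.
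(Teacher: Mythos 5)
The first half of your proposal tracks the paper closely: the trichotomy $P\in\{1,D,\text{dihedral of index }2\}$ via Olsson's case (ab), the disposal of $P=1$ and $P=D$ as in Proposition \ref{reduction}, the conclusion $|G:N|=2$ in the intermediate case, and the identification of $b$ as a $(3K)$ block by Clifford-theoretic considerations. (One small point: to rule out $G/N$ non-abelian simple when $P<D$, the paper does not invoke K\"ulshammer's theorem — which is deployed only when $P=D$ — but rather repeats the character-splitting count from Proposition \ref{reduction2}; your appeal to $G[b]=G$ here is not what the paper does, though the conclusion happens to hold.)

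The genuine gap is in your final paragraph. You propose to run Proposition \ref{reduction} on the pair $(N,b)$ to get $(S,b_S)$ and then ``carry the ambient index-$2$ overgroup through the chain of reductions,'' dismissing this as routine bookkeeping. It is not, for two reasons. First, the chain in Proposition \ref{reduction} descends through maximal normal subgroups of $N$, which need not be normal in $G$; when they are not, there is no candidate for the index-$2$ overgroup at that step (you cannot form $DH$ if $D$ does not normalise $H$). The paper avoids this entirely by descending inside $G$ rather than inside $N$: it considers only $H\trianglelefteq N$ with $H\trianglelefteq G$, shows $P\cap H\in\{1,P\}$, and at each step replaces $(G,N,B,b)$ by either a central extension of $(G/H,N/H)$ (via Fong's second reduction when $D\cap H=1$) or by $(DH,H,B_{DH},b_H)$, where the splitting/fusing argument shows $B_{DH}$ is again in a $(2B_1)$ class. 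Second — and this is the real content you are missing — once no proper non-central $H\trianglelefteq N$ is also normal in $G$, one must still rule out $N/Z(G)$ being a nontrivial direct product. The paper shows that in that case $N$ would be a central product $H * H^x$ with $x\in D\setminus N$, so the defect group of $b$ would be a direct product of the defect groups of $b_H$ and $(b_H)^x$; being dihedral this forces one factor trivial and one dihedral, contradicting that the two factors are $G$-conjugate and hence isomorphic. This case is exactly the obstruction to a naive ``apply Proposition \ref{reduction} to $N$'' reduction, and your proposal does not confront it.
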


\begin{proof}
Again let $N\trianglelefteq G$ be a maximal normal subgroup, let $b$ be a block of $N$ covered by $B$, with defect group $P=D\cap N$ for a defect group $D$ of $B$, and assume by Lemma \ref{Fong1} that $b$ is $G$-invariant. As in the previous proof, $P$ is either $1$, $D$, or dihedral of index $2$ in $D$, and the first two cases are as in the proof of Proposition \ref{reduction}.

Suppose then that $P$ is dihedral of index $2$ in $D$. By the same arguments as in the previous proof, $G/N$ cannot be non-abelian simple or of odd order, so $\abs{G:N}=2$ and $G=DN$. Then one of the height one characters of $B$, and hence also the second Brauer character, must split on restriction to $N$, giving that $b$ is in dihedral class $(3K)$.

Now let $H\trianglelefteq N$, and first suppose in addition that $H\trianglelefteq G$. Let $b_H$ be a block of $H$ covered by $b$ with defect group $P\cap H$; as in the proof of Proposition \ref{reduction} this is either $1$ or $P$. If $P\cap H=1$ then $D\cap H=1$ or $2$. But $D\cap H$ must also be $1$, $D$, or index $2$ in $D$, so $D\cap H=1$, and thus $B$ is Morita equivalent to a block of a central extension of $G/H$ by Lemma \ref{Fong2}. If $P\cap H=P$ then $b_H$ is Morita equivalent to $b$, as in the proof of Proposition \ref{reduction}. Then the unique block $B_{DH}$ of $DH$ covering $b_H$ has defect group $D$ and, as with $b$, the second and third Brauer characters of $b_H$ must fuse on induction to $DH$, so $B_{DH}$ is also in one of the $(2B_1)$ classes.

Using the above we now assume that the only normal subgroups $H\trianglelefteq N$ that are also normal in $G$ are $N$ and those contained in $Z(G)$; then $N/Z(G)$ must be a direct product of simple groups. Let $H\trianglelefteq N$ be such that $H/Z(G)$ is simple, and suppose that $H\neq N$. Then $N$ is a central product of the $G$-conjugates of $H$, which are $H$ and $H^x$, where $x\in D\setminus N$.

Let $b_H$ again be the block of $H$ covered by $b$. The block $(b_H)^x$ of $H^x$ conjugate to $b_H$ is covered by $B$, and $b$ is the unique block of $N$ covered by $B$, so $(b_H)^x$ is the unique block of $H^x$ covered by $b$, and $b$ is a central product of $b_H$ and $(b_H)^x$. Since the centre of $G$ must be odd, the defect groups of $b$ are a direct product of the defect groups of $b_H$ and $(b_H)^x$; they are dihedral, so must be a product of a dihedral and trivial group. But $b_H$ and $(b_H)^x$ are conjugate in $G$, so must have isomorphic defect groups, a contradiction. Therefore no such $H\neq N$ exists and $N$ is quasi-simple.
\end{proof}

In Section \ref{2B1} we show that there are no blocks in the $(2B_1)$ classes with defect at least $5$ for groups with quasi-simple subgroups of index $2$, as in the latter half of Proposition \ref{reduction3}. Hence if $\abs{D}\geq32$ then $P$ cannot have index $2$ in $D$.

Other than for $(2B_1)$, it is sufficient to show that there are no blocks of quasi-simple groups in the previously mentioned Morita equivalence classes to show that there are no blocks of any finite groups in those classes. In fact, since for any of these classes the non-trivial central element of $D$ is $G$-conjugate to non-central elements, the centre of $G$ must be odd; hence we need only consider odd covers of the finite simple groups.

We highlight several facts from the previous proofs for future reference:

\begin{remark}\label{reductionremark}
Let $B$ be a block of a finite group $G$ with dihedral or semidihedral defect groups and three simple modules, or semidihedral defect groups in one of the $(2B_4)$ or $(2B_1)$ classes. Then:
\begin{enumerate}[label=(\roman*)]
    \item $\abs{Z(G)}$ is odd.
\end{enumerate}

Let $b$ be a block of a normal subgroup $N\trianglelefteq G$ covered by $B$, and let $P$ and $D$ be defect groups of $b$ and $B$ respectively such that $P=D\cap N$.
\begin{enumerate}[label=(\roman*)]
\setcounter{enumi}{1}
    \item Unless $B$ is in class $(2B_1)$ with $\abs{D}=16$, either $P=1$ or $P=D$.
    \item If $P=D$ then $b$ and $B$ are Morita equivalent.
\end{enumerate}
\end{remark}

\subsection{Reduction to Quasi-Isolated Blocks}

Let $B$ be a block of a quasi-simple group of Lie type $G$ in cross characteristic with dihedral or semidihedral defect groups and three simple modules, semidihedral defect groups in one of the $(2B_4)$ classes, or semidihedral defect groups of order at least $32$ in one of the $(2B_1)$ classes. Let $\mathbf{G}$ be the corresponding simply connected simple algebraic group, with Steinberg endomorphism $F$ so that $G=\mathbf{G}^F/Z$, where $Z$ is a central subgroup; we may assume that $\abs{Z}$ is a power of $2$. If $B$ is not quasi-isolated, then by Theorem \ref{BonnafeRouquiercentral} it is Morita equivalent to a quasi-isolated block $B_L$ of some $L=\mathbf{L}^F/Z$, where $\mathbf{L}$ is a proper Levi subgroup of $\mathbf{G}$.

By the structure of $L$ as a central product of groups of Lie type and tori, $B_L$ is a central product of blocks, and each defect group $D$ is a central product. Since $D$ is non-abelian, it is not contained in a torus, so there must be some quasi-simple normal subgroup $S$ of $L$ that intersects $D$ non-trivially. Therefore Remark \ref{reductionremark} implies that $S$ contains $D$, and that there is a block $B_S$ of $S$ covered by $B_L$ that is Morita equivalent to $B_L$.

The rank of $S$ as a group of Lie type is at least $1$ but strictly less than the rank of $G$; hence if $G$ is of rank $1$ then $B$ must be quasi-isolated. Therefore induction on the rank of $G$ gives that $B$ is Morita equivalent to a quasi-isolated block of a quasi-simple group of Lie type. Hence to identify which Morita equivalence classes occur among finite groups of Lie type in cross characteristic it is sufficient to consider only the quasi-isolated blocks of the quasi-simple groups.

\section{Blocks of Quasi-Simple Groups}\label{proof}

We go through the finite simple groups and their odd covers case by case, and study the blocks with dihedral or semidihedral defect groups. 

\subsection{Defining Characteristic}

First consider the defining characteristic case, that is where $G$ is a finite group of Lie type of characteristic $p=\ell=2$. Humphreys \cite{Humphreys} proved that every $p$-block of $G$ has defect groups either trivial or the Sylow $p$-subgroups of $G$. The groups with dihedral Sylow $2$-subgroups were classified by Gorenstein and Walter \cite{Gorenstein-Walter}; in particular any finite quasi-simple group with dihedral Sylow $2$-subgroups is isomorphic to an odd cover of either $\Alt(7)$ or $\PSL_2(q)$ for $q$ odd. Similarly, by a result of Alperin, Brauer, and Gorenstein \cite{Alperin-Brauer-Gorenstein} any quasi-simple group with semidihedral Sylow $2$-subgroups is isomorphic to $M_{11}$ or an odd cover of $\PSL_3(q)$ for $q\equiv-1\bmod4$ or $\PSU_3(q)$ for $q\equiv1\bmod4$. Hence we need not further consider blocks in defining characteristic; note that while $\PSL_3(2)$ has dihedral Sylow $2$-subgroups it is isomorphic to $\PSL_2(7)$, which will be considered later.

\subsection{Types B, C, D}\label{classical}

Now let $\mathbf{G}$ be a simple simply connected group of Lie type of characteristic $p\neq\ell=2$, with $F$ a Steinberg endomorphism so that $\mathbf{G}^F$ is a finite group of Lie type, and let $G=\mathbf{G}^F/Z$, where $Z$ is a central subgroup of $\mathbf{G}^F$. First let $\mathbf{G}$ be of type $B_n$, $C_n$, or $D_n$ for $n>1$. Then $\mathbf{G}$ has no non-trivial quasi-isolated $2^\prime$-elements \cite[Table II]{Bonnafe}, and the only unipotent $2$-blocks of $\mathbf{G}^F$ are the principal blocks \cite[Prop. 6]{Enguehard}. Hence the only quasi-isolated block of $G$ is the principal block, which has defect groups the Sylow $2$-subgroups of $G$, which are neither dihedral nor semidihedral for any such $G$. Note that this includes the twisted groups $^2\!D_n(q)$ and $^3\!D_4(q)$, while $^2\!B_2(q)$ only exists in characteristic $2$.

\subsection{Exceptional Groups}

\subsubsection{Unipotent Blocks}\label{E7}

Enguehard \cite{Enguehard} classified all unipotent blocks of finite groups of Lie type for bad primes, as $2$ is other than for type $A$. There are none with semidihedral defect groups, and only $E_7(q)$ has unipotent blocks with dihedral defect groups. 

\begin{theorem}\label{E7theorem}
The simple group $E_7(q)$ for $q\equiv1\bmod4$ (resp. $-1\bmod4$) has two unipotent blocks with dihedral defect groups of order $\abs{q-1}_2$ (resp. $\abs{q+1}_2$) in class $(3A)$ (resp. $(3K)$). 
\end{theorem}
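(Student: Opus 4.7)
The plan is to leverage Enguehard's classification \cite{Enguehard} of unipotent $\ell$-blocks of finite groups of Lie type at bad primes, applied to $\mathbf{G}$ of type $E_7$ with $\ell = 2$. A unipotent block of $\mathbf{G}^F$ is parametrised by a unipotent $e$-cuspidal pair $(\mathbf{L}, \lambda)$, where $\mathbf{L}$ is an $e$-split Levi subgroup (with $e=1$ when $q \equiv 1 \bmod 4$, and $e=2$ when $q \equiv -1 \bmod 4$) and $\lambda$ is a unipotent $e$-cuspidal character of $\mathbf{L}^F$. The defect group $D$ has the Kessar--Malle-style structure $Z(\mathbf{L})^F_2 \trianglelefteq D$ with the quotient $D/Z(\mathbf{L})^F_2$ isomorphic to a Sylow $2$-subgroup of the relative Weyl group $W_{\mathbf{G}^F}(\mathbf{L}, \lambda)$. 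The principal unipotent block, coming from $(\mathbf{T}, 1)$ for a Sylow $e$-torus $\mathbf{T}$, has as its defect group a full Sylow $2$-subgroup of $E_7(q)$, which is neither dihedral nor semidihedral. So the first step is to run through Enguehard's list for $E_7$ and single out the non-principal blocks whose defect-group data forces $D$ to be dihedral of the prescribed order $\abs{q-1}_2$ (resp.\ $\abs{q+1}_2$), and to verify that there are exactly two such blocks.

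The second step is to list, for each of these two blocks, the ordinary irreducible characters and their degrees. All such characters are unipotent characters of $E_7(q)$, whose degrees are the generic polynomials in $q$ tabulated in Carter \cite{Carter}. For dihedral defect of order $2^n$ a block contains four height zero characters and $2^{n-2} - 1$ height one characters. The criterion recalled at the end of Section \ref{preliminaries} then reads off the Morita equivalence class from these degrees: class $(3A)$ requires at least three distinct height zero degrees together with the height one degree being strictly largest, while class $(3K)$ requires the largest height zero degree to equal the sum of the other three. Verifying the appropriate identity in each case simultaneously rules out class $(3B)$.

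The passage between the two congruence cases will be handled by Ennola duality, which substitutes $q \mapsto -q$ in the unipotent degree polynomials, swaps the roles of $e = 1$ and $e = 2$, and interchanges the classes $(3A)$ and $(3K)$; this halves the work. Finally, the descent from the simply connected cover $\mathbf{G}^F$ to the simple group $E_7(q)$ is immediate, as unipotent characters are trivial on the centre and blocks descend with unchanged defect groups and Morita equivalence class.

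The main obstacle will be the bookkeeping: extracting the two correct unipotent blocks with dihedral defect groups from Enguehard's data for $E_7$, and then verifying the precise relation among the four height zero degrees that places the block in class $(3K)$ (rather than $(3B)$) when $q \equiv -1 \bmod 4$, which requires a careful polynomial computation at the $2$-part level. The $(3A)$ identification for $q \equiv 1 \bmod 4$ is then comparatively straightforward once the correct four height zero characters have been isolated.
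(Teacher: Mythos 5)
Your proposal contains a critical false premise in the second step: you assert that all ordinary irreducible characters in a unipotent block are themselves unipotent characters, so that their degrees can be read off from the generic degree tables. This is not true. A unipotent $\ell$-block contains (at least) one unipotent character, but generally many non-unipotent ones. In fact, each of the two blocks you want to analyse contains exactly \emph{two} unipotent characters, namely the two constituents of $R_{\mathbf{L}}^{\mathbf{G}}(E_6[\theta^i])$ where $\mathbf{L}^F=E_6(q).(q-1)$; the remaining $2^{n-2}+1$ or so characters of the block come from non-trivial characters of the rank-one torus factor of $\mathbf{L}^F$ (and, in the adjoint group, from tensoring with the non-trivial linear character of $G/[G,G]$). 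These degrees are not available in Carter's table, so the criterion recalled at the end of Section \ref{preliminaries} cannot be applied directly, and without it you cannot distinguish $(3A)$ or $(3K)$ from $(3B)$.

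The paper avoids this difficulty by first working in the adjoint group $E_7(q)_{ad}$, where Enguehard's description locates two unipotent blocks $B$ with dihedral defect groups of order $2\abs{q\pm1}_2$. Since the two unipotent characters of $[G,G]$ each split into a pair on induction to $G=E_7(q)_{ad}$, the block $B$ has four height zero characters with only two distinct degrees, which forces $B$ into class $(2A)$ or $(2B)$; the decomposition matrix then yields the height one degree as $\chi_\varepsilon(1)\pm\chi_1(1)$ (with the sign determined by $q\bmod 4$) without ever computing the non-unipotent degrees from first principles. Only after pinning down $B$ does the paper pass to the covered block $b$ of the simple group, using a parity count on the $2^{n-2}-1$ (an odd number) height one characters of $B$ to show that exactly one splits on restriction — this is what yields $l(b)=3$ and places $b$ in $(3A)$ or $(3K)$, a step absent from your outline.

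Finally, your descent claim is also incorrect as stated: the $2$-part of the centre of $E_7(q)_{sc}$ is non-trivial and is contained in every defect group, so passing from $E_7(q)_{sc}$ to the simple group $E_7(q)$ halves the order of the defect group and changes the Morita equivalence class of the block; it is not the innocuous inflation you describe. The cleaner route — and the one the paper takes — is to work in $E_7(q)_{ad}$ and analyse the normal subgroup $E_7(q)$ of index $2$.
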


\begin{proof}
Let $G=E_7(q)_{ad}$, so $[G,G]=E_7(q)$ and $\abs{G:[G,G]}=2$. By \cite[Section 3.2]{Enguehard} there are two non-principal unipotent blocks of $G$ with dihedral defect groups of order $2\abs{q\pm1}_2$, corresponding to unipotent characters labelled $E_6[\theta]$ and $E_6[\theta^2]$ of a Levi subgroup $E_6(q).(q-1)$ of $G$. Let $B$ be one of these blocks, which contains the unipotent characters denoted in \cite[Section 13.9]{Carter} by $E_6[\theta^i],1$ and $E_6[\theta^i],\varepsilon$, for $i=1$ or $2$. These two unipotent characters of $G$ restrict irreducibly to $[G,G]$, so $B$ covers a unique block $b$ of $[G,G]$ containing these two unipotent characters, which each split on induction up to $G$ into pairs of irreducible characters $\chi_1,\chi_1^\prime$ and $\chi_\varepsilon,\chi_\varepsilon^\prime$ respectively in $B$.

From \cite[Section 13.9]{Carter} the degrees of these characters are related by $\chi_\varepsilon(1)=q^9\chi_1(1)$. Since $\chi_1,\chi_\varepsilon$ have the same height but different degrees, and $B$ has dihedral defect groups, they both have height zero, as do $\chi_1^\prime$ and $\chi_\varepsilon^\prime$. Then, since among these four height zero characters there are exactly two distinct degrees, $B$ must be in class $(2A)$ or $(2B)$. Since $B$ has defect at least $3$, it contains at least one height one character, whose degree is $\chi_\varepsilon(1)+\chi_1(1)=(q^9+1)\chi_1(1)$ for $(2A)$, which must therefore be the case if $q\equiv1\bmod4$, and $\chi_\varepsilon(1)-\chi_1(1)=(q^9-1)\chi_1(1)$ for $(2B)$ if $q\equiv-1\bmod4$.

Note that the defect groups of $b$ are also dihedral (or Klein four), of order $\abs{q\pm1}_2$. Indeed by Lemma \ref{evenindex} they are index-$2$ subgroups of defect groups of $B$, so supposing they are not dihedral they must be cyclic, so have only height zero characters by \cite[Thm 1.1]{Kessar-Malle}. So then all of the height one characters of $B$ must split on restriction to $[G,G]$, which would lead to either none of the rows or two distinct rows of the decomposition matrix of $b$ being repeated. But since the defect groups are of order at least $4$, there must be exactly one repeated row as in \cite[Thm 5.1.2]{Craven}.

There are an odd number of them, so one of the height one characters of $B$ must split on restriction to $[G,G]$, and thus so must the second Brauer character. Then all the other ordinary characters must fuse, and thus from the resulting decomposition matrix $b$ must be in class $(3A)$ if $q\equiv1\bmod4$, and $(3K)$ if $q\equiv-1\bmod4$.
\end{proof}

Note from the proof that $E_7(q)_{ad}$ has unipotent blocks with dihedral defect groups in class $(2A)$ or $(2B)$ for $q\equiv1$ or $-1\bmod4$ respectively.

\subsubsection{Quasi-Isolated Blocks}

Kessar and Malle \cite{Kessar-Malle} described the non-unipotent quasi-isolated blocks of exceptional groups of Lie type for bad primes $\ell\neq p$. Let $\mathbf{G}$ be a simply connected simple exceptional algebraic group, with $F$ a Steinberg endomorphism so that $\mathbf{G}^F$ is a finite group of Lie type, and $\mathbf{G}^*$ is a group dual to $\mathbf{G}$. They list all quasi-isolated $\ell^\prime$-elements $s\in\mathbf{G}^*$ and describe the quasi-isolated $\ell$-blocks in each Lusztig series $\mathcal{E}(\mathbf{G}^F,s)$, in particular proving Theorem \ref{Kessar-Malle}. For $\ell=2$ in most cases the corresponding $Z(\mathbf{L})_2^F$, so also the defect groups of the block, contains a $4\times4$ subgroup, so they are neither dihedral nor semidihedral; nor are the defect groups for $E_7(q)$ after quotienting by the simply connected group's centre of order $2$. The exceptions are the blocks of $E_6(q)$ and $^2\!E_6(q)$ in line $8$ of \cite[Table 3]{Kessar-Malle} and the blocks of $E_8(q)$ and $G_2(q)$ in line $6$ of \cite[Table 5]{Kessar-Malle} and line $2$ of \cite[Table 9]{Kessar-Malle} respectively. We show that the defect groups of these blocks cannot be dihedral, and deduce the possible classes of the block if they are semidihedral. Note that for each of these exceptions the blocks are isolated and the centraliser $C_{\mathbf{G}^*}(s)^F$ has a unipotent block with semidihedral defect groups. 

Hiss and Shamash \cite{Hiss-Shamash} described the $2$-blocks of $G_2(q)$ for $q$ odd, including character degrees, defect groups, and decomposition matrices. The case in question only occurs when $q\equiv\pm5\bmod12$. By \cite[(2.2.3)]{Hiss-Shamash}, if $q\equiv5\bmod12$ then $G_2(q)$ has a block with semidihedral defect groups of order $4\abs{q-1}_2$ and decomposition matrix that of class $(3A_1)$. This is the block corresponding to line $2$ in \cite[Table 9]{Kessar-Malle}, since we know that block contains the character corresponding to the trivial character of $C_{\mathbf{G}^*}(s)^F={}^2\!A_2(q)$, whose degree is $\abs{G_2(q):{}^2\!A_2(q)_{ad}}_{p^\prime}=q^3-1$. Similarly by \cite[(2.3.2)]{Hiss-Shamash}, replacing $^2\!A_2(q)$ with $A_2(q)$, if $q\equiv-5\bmod12$ then $G_2(q)$ has an isolated block with semidihedral defect groups of order $4\abs{q+1}_2$ and decomposition matrix that of class $(3B_1)$ or $(3D)$.

\begin{theorem}
If $B$ is a quasi-isolated block of $E_6(q)_{sc}$ with semidihedral defect groups, then $q\equiv-1\bmod4$, the defect groups are of order $4\abs{q+1}_2$, and $B$ is in class $(3B_1)$ or $(3D)$. If $B$ is a quasi-isolated block of $^2\!E_6(q)_{sc}$ with semidihedral defect groups, then $q\equiv1\bmod4$, the defect groups are of order $4\abs{q-1}_2$, and $B$ is in class $(3A_1)$ or $(3C_{2,2})$.
\end{theorem}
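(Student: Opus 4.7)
The plan is to follow the template just used for $G_2(q)$: identify the block in \cite[Table 3]{Kessar-Malle}, extract the unipotent block of the centraliser via Jordan decomposition, and compare against the classification in Theorem \ref{semidihedral}. By hypothesis, $B$ is one of the quasi-isolated $2$-blocks of $\mathbf{G}^F$ indexed by line $8$ of \cite[Table 3]{Kessar-Malle}, attached to an isolated semisimple $2'$-element $s \in (\mathbf{G}^*)^F$ whose centraliser $C_{\mathbf{G}^*}(s)^F$ has a derived component of Dynkin type $A_2$ (in the $E_6$ case) or ${}^2\!A_2$ (in the ${}^2\!E_6$ case). The defect group recorded there has order $4|q\mp 1|_2$, matching that of the principal $2$-block of $\PSL_3(q)$ or $\PSU_3(q)$ respectively.

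For this defect group to be semidihedral rather than abelian, the corresponding principal block of $\PSL_3(q)$ or $\PSU_3(q)$ must already be semidihedral, which by Theorem \ref{semidihedral} forces $q \equiv -1 \pmod 4$ in the $\PSL_3$ case and $q \equiv 1 \pmod 4$ in the $\PSU_3$ case -- precisely the congruences in the statement. Having pinned down the parity, I would apply the Jordan-decomposition bijection of Theorem \ref{JordanDecomposition} (available at the level of $\mathcal{E}_\ell$ because centralisers in the simply connected group are connected) together with the block identification in \cite[Theorem 1.2]{Kessar-Malle} to match $B$ with the unipotent block of $C_{\mathbf{G}^*}(s)^F$, in such a way that character degrees scale uniformly by $\bigl|(\mathbf{G}^*)^F : C_{\mathbf{G}^*}(s)^F\bigr|_{p'}$ and decomposition numbers are preserved. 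This shows that $B$ has the same decomposition matrix as the $\PSL_3(q)$ or $\PSU_3(q)$ principal block, which by Theorem \ref{semidihedral} is that of $(3B_1)/(3D)$ or $(3A_1)/(3C_{2,2})$, as claimed.

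The main obstacle is promoting the Jordan bijection on ordinary characters to an equality of decomposition matrices -- that is, ensuring that the Brauer characters and decomposition numbers transfer alongside the ordinary ones. In the dihedral setting (as in Theorem \ref{E7theorem}) the ordinary degrees alone single out the Morita class, but for semidihedral defect the classes $(3B_1)$ and $(3D)$ share a decomposition matrix outright, and $(3A_1)$ and $(3C_{2,2})$ cannot be separated by ordinary degrees, so no further refinement is available from this method. This is reflected in the theorem's conclusion naming only a pair of classes rather than a single class; an Ext-quiver computation would be required to pin down either block within its pair, which is beyond the scope of the argument here.
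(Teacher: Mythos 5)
Your approach has a fatal gap at the step you yourself flag as ``the main obstacle'': there is no general transfer of decomposition numbers along the Jordan decomposition bijection. Theorem \ref{JordanDecomposition} (Lusztig's Jordan decomposition) is a bijection on \emph{ordinary} characters preserving degrees up to the $p'$-cofactor; it carries no information about Brauer characters or decomposition matrices. A Morita equivalence (and hence an equality of decomposition matrices) comes from the Bonnaf\'e--Rouquier Theorem \ref{Bonnafe-Rouquier}, but that requires $C_{\mathbf{G}^*}(s)$ to sit inside a \emph{proper} Levi subgroup -- which is precisely the negation of being quasi-isolated. Since $B$ is quasi-isolated by hypothesis, Bonnaf\'e--Rouquier cannot be applied, and your assertion that ``decomposition numbers are preserved'' is unjustified. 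Indeed, the whole difficulty of quasi-isolated blocks (and the reason the paper needs a separate argument here) is that no such reduction to the centraliser is available.

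There are two further inaccuracies. First, you claim the Jordan bijection is ``available at the level of $\mathcal{E}_\ell$ because centralisers in the simply connected group are connected.'' But the semisimple element $s$ lives in $(\mathbf{G}^*)^F$, and $\mathbf{G}^*$ is the \emph{adjoint} group $E_6(q)_{ad}$ (dual to the simply connected $\mathbf{G}$), where centralisers of semisimple elements are typically disconnected. The paper explicitly observes $C_{\mathbf{G}^*}(s)$ is disconnected, with $(C_{\mathbf{G}^*}(s)^\circ)^F = A_2(q^3)$ and $C_{\mathbf{G}^*}(s)^F = A_2(q^3).3$. Second, you identify the relevant comparison group as $\PSL_3(q)$ or $\PSU_3(q)$, whereas it is a group of type $A_2$ over $\mathbb{F}_{q^3}$ (though the $2$-parts of $q^3 \pm 1$ happen to coincide with those of $q \pm 1$, so the defect group order works out numerically by accident of oddness of $3$).

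The paper's actual argument uses only what Jordan decomposition genuinely delivers -- ordinary character degrees -- and deduces the decomposition matrix indirectly. From the degrees of the Jordan correspondents of $\phi_3$, $\phi_{21}$, $\phi_{1^3}$ it computes heights, shows the $\phi_{21}$ character has height $\geq 2$ (so dihedral is excluded), and then rules out the semidihedral decomposition matrices of $(3C_{2,1})$, $(3B_2)$, $(3A_1)$, $(3C_{2,2})$ by degree comparisons and linear-dependence constraints among the four height-zero rows, and rules out $(3H)$ by showing the forced second-row degree $\abs{\mathbf{G}^F:C_\mathbf{G}(s)^F}_{p'}(q^2+q-1)$ matches no character degree of $E_6(q)_{sc}$. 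That combinatorial elimination, not a transfer of decomposition matrices, is the substance of the proof; the $^2\!E_6$ case then follows by Ennola duality.
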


\begin{proof}
First let $\mathbf{G}^F=E_6(q)_{sc}$. Block $8$ in \cite[Table 3]{Kessar-Malle} for $q\equiv-1\bmod4$ has $C_{\mathbf{G}^*}(s)^F=A_2(q^3).3$ and defect groups of the form $(q-1).2.2$. It is the only block in $\mathcal{E}(\mathbf{G}^F,s)$, so it contains the Jordan correspondents of all unipotent characters of $C_{\mathbf{G}^*}(s)^F$, denoted $\phi_3$, $\phi_{21}$, and $\phi_{1^3}$ according to the partitions of $3$. Note that $C_{\mathbf{G}^*}(s)$ is disconnected, so the unipotent characters are those of $(C_{\mathbf{G}^*}(s)^\circ)^F=A_2(q^3)$ induced up to $C_{\mathbf{G}^*}(s)^F$, but these characters are fixed by the field automorphism. 

The degrees of the three known characters are $\phi_3(1)=1$, $\phi_{21}(1)=q(q+1)$, and $\phi_{1^3}(1)=q^3$ each multiplied by a factor of $\abs{\mathbf{G}^F:C_\mathbf{G}(s)^F}_{p^\prime}$. Each character of the block must have this factor, so the $\phi_3$ character has height zero. Then the $\phi_{21}$ character has height at least $2$, so the defect groups of the block are not dihedral. Suppose they are semidihedral. From the three known character degrees we can rule out the decomposition matrices of $(3C_{2,1})$ and $(3B_2)$ by the large height character not having largest degree, and $(3A_1)$ and $(3C_{2,2})$ as this would force a contradiction either on the linear dependence of these three characters or the repeated row having height greater than one. For $(3H)$, the configurations not giving either of the previous contradictions has the $\phi_3$ character as the third row of the decomposition matrix, giving the second row degree $\abs{\mathbf{G}^F:C_\mathbf{G}(s)^F}_{p^\prime}\cdot(q^2+q-1)$, which notably is not a cyclotomic polynomial. Using GAP and \cite{Lubeck}, we see that there are no ordinary character degrees of $E_6(q)_{sc}$ which coincide with this degree for any $q$. (This was done by listing all character degrees as polynomials in $q$, subtracting the polynomial degree $\abs{\mathbf{G}^F:C_\mathbf{G}(s)^F}_{p^\prime}\cdot(q^2+q-1)$, and checking for any positive integer roots up to a sufficiently high $q$; a polynomial's roots are bounded by twice the absolute value of the largest coefficient divided by its leading coefficient.) Therefore the block is in class $(3B_1)$ or $(3D)$; these classes have the same decomposition matrix.

Similarly, by Ennola duality and using the same arguments, $\mathbf{G}^F={}^2\!E_6(q)_{sc}$ with $q\equiv1\bmod4$ has a block with $C_{\mathbf{G}^*}(s)^F={}^2\!A_2(q^3).3$ that may have semidihedral defect groups, and if so is in class $(3A_1)$ or $(3C_{2,2})$, whose decomposition matrices are indistinguishable based only on ordinary character degrees.
\end{proof}

\begin{theorem}
If $B$ is an isolated block of $E_8(q)$ with semidihedral defect groups, then either $q\equiv5\bmod12$, the defect groups are of order $\abs{q-1}_2$, and $B$ is in class $(3A_1)$ or $(3C_{2,2})$; or $q\equiv7\bmod12$, the defect groups are of order $\abs{q+1}_2$, and $B$ is in class $(3B_1)$ or $(3D)$.
\end{theorem}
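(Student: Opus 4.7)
The plan is to imitate almost verbatim the argument just given for $E_6(q)_{sc}$ and $^2\!E_6(q)_{sc}$, replacing line $8$ of \cite[Table 3]{Kessar-Malle} with line $6$ of \cite[Table 5]{Kessar-Malle}. That entry produces an isolated $2$-block $B$ of $E_8(q)$ (for $q$ odd) whose semisimple label $s$ has centraliser $C_{\mathbf{G}^*}(s)^F$ of type $A_2(q).\text{something}$ when $3\mid q-1$ and of type ${}^2\!A_2(q).\text{something}$ when $3\mid q+1$, with defect groups of the form $Z(\mathbf{L})_2^F$ of order $\abs{q-1}_2$ or $\abs{q+1}_2$ respectively. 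The two congruence hypotheses $q\equiv 5,7\bmod 12$ in the statement combine the condition $3\mid q\mp1$ (so that the centraliser is over $\mathbb{F}_q$) with $\abs{q\mp1}_2=4$ being forced to produce defect groups of semidihedral order at least $2^4$. Everything below is stated for $q\equiv 5\bmod 12$ and the ${}^2\!A_2$ case; the $q\equiv 7\bmod 12$ case is the Ennola-dual with $A_2$ in place of ${}^2\!A_2$.

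By Theorem \ref{JordanDecomposition} and the discussion in Section~\ref{E7}, $B$ contains the Jordan correspondents of the three unipotent characters $\phi_3,\phi_{21},\phi_{1^3}$ of the centraliser, indexed by the partitions of $3$. Their degrees are $1$, $q(q-1)$, and $q^3$, each scaled by the common factor $\abs{\mathbf{G}^F:C_\mathbf{G}(s)^F}_{p'}$. As in the $E_6$ proof, the $\phi_3$ correspondent has height zero, the $\phi_{21}$ correspondent then has height at least $2$ (since $q(q-1)/\gcd$ is even with large $2$-adic valuation), so the defect groups are not dihedral.

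Assuming the defect groups are semidihedral, one eliminates classes as follows. The classes $(3C_{2,1})$ and $(3B_2)$ are ruled out because their large-height character has the largest degree, contradicting $\phi_{1^3}(1)=q^3>q(q-1)=\phi_{21}(1)$. The classes $(3B_1),(3D)$ are ruled out by the linear-dependence constraints among rows of their decomposition matrix combined with the requirement that the repeated row has height one, not $n-2\geq 2$; the same style of argument eliminates all but one configuration of $(3H)$. The surviving $(3H)$ configuration forces an ordinary character of degree $\abs{\mathbf{G}^F:C_\mathbf{G}(s)^F}_{p'}\cdot(q^2+q-1)$, and this polynomial in $q$ is not a product of cyclotomics; I would rule it out by listing all generic character degrees of $E_8(q)$ (using \cite{Lubeck} and GAP/CHEVIE), subtracting, and checking that no such specialisation admits a positive integer root, via the same coefficient bound used in the $E_6$ proof. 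What remains is $(3A_1)$ or $(3C_{2,2})$, whose decomposition matrices (and therefore the visible degrees) agree.

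The main obstacle will be the last step: the generic character table of $E_8(q)$ is substantially larger than that of $E_6(q)$, so cataloguing all generic degrees and checking the non-existence of a root uniformly in $q$ is more delicate than for $E_6$, though computationally the reduction to root-bounds of integer polynomials is the same and is within reach of current CHEVIE data. A secondary subtlety is the precise identification of the centraliser structure (including the component group and whether Ennola duality must be applied at the level of $\mathbf{G}$ or only of $C_{\mathbf{G}^*}(s)$), but this is tabulated in \cite[Table 5]{Kessar-Malle} and need only be read off.
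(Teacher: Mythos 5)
Your plan is built on a misreading of the entry you cite. Line $6$ of \cite[Table 5]{Kessar-Malle} gives $C_{\mathbf{G}^*}(s)^F = {}^2\!E_6(q).{}^2\!A_2(q)$ (for $q\equiv 1\bmod 4$, $q\equiv -1\bmod 3$), or $E_6(q).A_2(q)$ by Ennola duality in the other congruence class. You have treated the centraliser as though it were essentially $A_2$ or ${}^2\!A_2$ and the ``something'' were negligible, but the $E_6$ factor is the dominant structure here and is what makes the $E_8$ case genuinely different from the $E_6$ case. Concretely: the unipotent characters of $C_{\mathbf{G}^*}(s)^F$ that matter are tensor products ${}^2\!E_6[\theta^i]\otimes\phi_\lambda$, not the bare $\phi_\lambda$ with degrees $1$, $q(q\mp 1)$, $q^3$; the height-zero/height-$\geq 2$ analysis depends on the $2$-part of ${}^2\!E_6[\theta](1)$, which is $2^9\abs{(q-1)^4}_2$; and the forbidden $(3H)$ degree acquires an extra factor of ${}^2\!E_6[\theta](1)$ and the sign is $q^2-q-1$, not $q^2+q-1$.

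More importantly, unlike the $E_6(q)_{sc}$ case, $\mathcal{E}_2(\mathbf{G}^F,s)$ here is a union of \emph{three} $2$-blocks, so you cannot simply say ``$B$ contains the Jordan correspondents of all three unipotent characters of the centraliser.'' One must identify which characters actually lie in the block in question, and the paper does this by reading off the Harish--Chandra series data from Table~5: the relevant block (for $i\in\{1,2\}$) consists of a $2$-element Harish--Chandra series for the cuspidal pair $(\Phi_1.E_7(q),{}^2\!E_6[\theta^i])$ together with the single character ${}^2\!E_6[\theta^i]\otimes\phi_{21}$, and a separate $2$-part argument shows that ${}^2\!E_6[\theta^i]\otimes\phi_{1^3}$ is also the only other candidate with the right defect (with ${}^2\!E_6[1]\otimes\phi_{21}$ ruled out because Table 5 places it in one of the other two blocks). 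Without this step you do not know the three degrees you claim to know. Finally, the degree check for the surviving $(3H)$ configuration should be performed against the degrees of ${}^2\!E_6(q).{}^2\!A_2(q)$ (products of degrees of the two factors), which Jordan decomposition makes sufficient; checking all of $E_8(q)$ directly, as you propose, is both unnecessary and substantially more expensive. You flagged the centraliser identification as a ``secondary subtlety,'' but it is in fact the main obstacle: once corrected, the argument changes in each of the ways above, and the verbatim transfer you intend does not go through.
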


\begin{proof}
Let $\mathbf{G}^F=E_8(q)$; note that $\mathbf{G}^*\cong\mathbf{G}$. When $q\equiv1\bmod4$, block $6$ of \cite[Table 5]{Kessar-Malle} occurs when also $q\equiv-1\bmod3$, and has $C_\mathbf{G}(s)^F={}^2\!E_6(q).{}^2\!A_2(q)$; here $\mathcal{E}_2(\mathbf{G}^F,s)$ is a union of three blocks. Two of these blocks have defect groups of the form $P$ with subgroups $A\leq D\leq P$ such that $D\cong2\abs{q-1}_2$ is the unique cyclic subgroup of index $2$ in $P$, and any $\sigma\in P\setminus D$ inverts $A\cong\abs{q-1}_2$; this means that $P$ is dihedral, semidihedral, or generalised quaternion, of order $4\abs{q-1}_2$. These two blocks, corresponding to $i=1$ and $2$, each contain two Harish-Chandra series: one series has $\mathbf{L}^F=\Phi_1.E_7(q)$, $\lambda={}^2\!E_6[\theta^i]$, and $W_{\mathbf{G}^F}(\mathbf{L},\lambda)=A_1=2$; and the other has $\mathbf{L}^F=E_8(q)$, $\lambda={}^2\!E_6[\theta^i]\otimes\phi_{21}$, and $W_{\mathbf{G}^F}(\mathbf{L},\lambda)=1$ (here $\lambda$ is labelled by the Jordan corresponding unipotent character in $C_\mathbf{G}(s)^F$). The latter series consists only of the character $^2\!E_6[\theta^i]\otimes\phi_{21}$, since $\mathbf{L}^F=\mathbf{G}^F$, and the former contains $^2\!E_6[\theta^i]$ and one other character, since $W_{\mathbf{G}^F}(\mathbf{L},\lambda)$ has order $2$ and hence $R_\mathbf{L}^\mathbf{G}(\lambda)$ has norm $2$. Assume $i=1$ (all degrees are identical for $i=2$) and let $B$ be the corresponding block of $\mathbf{G}^F$.

Each character of $\Irr(B)$ corresponds to an ordinary character of $^2\!E_6(q).{}^2\!A_2(q)$ and has $\abs{\mathbf{G}^F:C_\mathbf{G}(s)^F}_{p^\prime}=\abs{E_8(q):{}^2\!E_6(q).{}^2\!A_2(q)}_{p^\prime}$ as a common factor; in particular, the $2$-part of this is $2^3\abs{(q-1)^3}_2$. The $2$-part of $E_8(q)$ is $2^{14}\abs{(q-1)^8}_2$ and the defect groups of $B$ have order $2^2\abs{q-1}_2$. Then since $\abs{{}^2\!E_6[\theta](1)}_2=2^9\abs{(q-1)^4}_2$, the correspondent of $^2\!E_6[\theta]$ has height zero and that of $^2\!E_6[\theta]\otimes\phi_{21}$ has height at least $2$, since $\phi_{21}(1)=q(q-1)$; therefore the defect groups are not dihedral.

Of all the other unipotent characters of $C_\mathbf{G}(s)^F$, found in \cite[Section 13.9]{Carter}, only $^2\!E_6[\theta]\otimes\phi_{1^3}$ has a large enough $2$-part given the defect of $B$ -- except also perhaps $^2\!E_6[1]\otimes\phi_{21}$, but \cite[Table 5]{Kessar-Malle} lists this character in a Harish-Chandra series of the other of the three blocks contained in $\mathcal{E}_2(\mathbf{G}^F,s)$ -- and it has height zero since $\phi_{1^3}(1)=q^3$.

Suppose $B$ has semidihedral defect groups. As in the previous proof, using the three known character degrees the decomposition matrices of $(3C_{2,1})$, and $(3B_2)$ can be ruled out by the large height character not having largest degree, and $(3B_1)$ and $(3D)$ by linear dependencies and necessary heights. Similarly for $(3H)$, the third row is forced to have degree $\abs{\mathbf{G}^F:C_\mathbf{G}(s)^F}_{p^\prime}\cdot{}^2\!E_6[\theta](1)\cdot(q^2-q-1)$, again not a cyclotomic polynomial, which does not coincide with any ordinary character degree of $^2\!E_6(q).{}^2\!A_2(q)$ for any $q$ (checking all products of degrees of $^2\!E_6(q)$ with those of $^2\!A_2(q)$ up to sufficiently large $q$). Therefore the block is in class $(3A_1)$ or $(3C_{2,2})$.

If $q\equiv-1\bmod4$ then by Ennola duality and using the same arguments, with $C_\mathbf{G}(s)^F=E_6(q).A_2(q)$, we get that $B$ is in class $(3B_1)$ or $(3D)$.
\end{proof}

Note that $^2\!F_4(q)$ only exists in characteristic $2$, and $^2G_2(q)$ has elementary abelian Sylow $2$-subgroups.

\subsection{Type A}\label{typeA}

Now consider the groups of type $A$, that is the linear and unitary groups. The general unitary group will be denoted $\GL_n(-q)$ or $\GU_n(q)$ (as opposed to $\GU_n(q^2)$), and similarly for the special and projective unitary groups. Let $b$ be a $2$-block of an odd cover of $\PSL_n(\varepsilon q)$, where $\varepsilon=\pm1$, contained in a block $b_{SL}$ of $\SL_n(\varepsilon q)$, with semisimple label $s\in\PGL_n(\varepsilon q)$ according to Lusztig series, and let $B$ be a block of $G=\GL_n(\varepsilon q)$ covering $b_{SL}$. The defect groups of the blocks of the general linear and unitary groups were described by Brou\'e in \cite[(3.7)]{Broue}, and those of $B$ are the Sylow $2$-subgroups of $C_G(\tilde{s})\cong\prod_{i\in I}\GL_{n_i}\left((\varepsilon q)^{a_i}\right)$ for some suitable $n_i$ and $a_i$, where $\tilde{s}$ is a representative of $s$ in $G$. 

Suppose that $b$ is quasi-isolated and has dihedral or semidihedral defect groups. By \cite[Table II]{Bonnafe} quasi-isolated semisimple elements $s\in\PGL_n(k)$ have connected centraliser of the form $(\PGL_{n/d}(k))^d$, where $d$ is the order of $s$ which is therefore odd. Therefore $C_G(\tilde{s})\cong\prod_{i\in I}\GL_{n/d}\left((\varepsilon q)^{a_i}\right)$.

We must have $n/d>1$, otherwise the defect groups of $B$ would be abelian. Additionally if $\abs{I}>1$ then $C_G(\tilde{s})$ would at least contain $\GL_2\left((\varepsilon q)^{a_1}\right)\times\GL_2\left((\varepsilon q)^{a_2}\right)$ as a subgroup, so the defect groups of $b_{SL}$ would contain the Sylow $2$-subgroups of $\SL_2\left((\varepsilon q)^{a_1}\right)\times\SL_2\left((\varepsilon q)^{a_2}\right)$ -- a product of two generalised quaternion groups -- which after quotienting by a cyclic subgroup cannot be dihedral or semidihedral.

Therefore $C_G(\tilde{s})\cong\GL_{n/d}\left((\varepsilon q)^d\right)=\GL_{n/d}(\varepsilon q^d)$; let $\sigma:\GL_{n/d}(\varepsilon q^d)\rightarrow\GL_n(\varepsilon q)$ denote an embedding. For any $x$ we have $\det_n(x\sigma)=\left(\det_{n/d}(x)\right)^m$, where $m=(q^d-\varepsilon)/(q-\varepsilon)$ which is odd since $d$ is, so if $x$ is a $2$-element then $\det_n(x\sigma)=1$ precisely when $\det_{n/d}(x)=1$. Hence, with the following subscript-$2$ notation denoting a Sylow $2$-subgroup, $\left(\GL_{n/d}(\varepsilon q^d)_2\right)\sigma\cap \SL_n(\varepsilon q)=\left(\SL_{n/d}(\varepsilon q^d)_2\right)\sigma$, so the defect groups of $b_{SL}$ are isomorphic to $\SL_{n/d}(\varepsilon q^d)_2$, and those of $b$ are isomorphic to $\PSL_{n/d}(\varepsilon q^d)_2$. In particular the defect groups of $b$ are dihedral (or Klein four) if and only if $n/d=2$, and semidihedral if and only if $n/d=3$ and $q\equiv-\varepsilon\bmod4$.

We first look at the semidihedral case, as it is more straightforward.

\begin{theorem}
If $b$ is a quasi-isolated block of (a non-exceptional odd cover of) $\PSL_n(q)$ (resp. $\PSU_n(q)$) with semidihedral defect groups, then $q\equiv-1\bmod4$ (resp. $q\equiv1\bmod4$), the defect groups are of order $4\abs{q+1}_2$ (resp. $4\abs{q-1}_2$), and $b$ is in class $(3B_1)$ or $(3D)$ (resp. $(3A_1)$).
\end{theorem}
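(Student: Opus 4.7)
The plan is to apply Bonnafé--Rouquier (Theorem~\ref{Bonnafe-Rouquier}) on the $\GL$-side and descend to the principal block of $\PSL_3(q^d)$ or $\PSU_3(q^d)$, where Theorem~\ref{semidihedral} will identify the class. Let $b$ be the block of the statement, contained in $b_{SL}$ of $\SL_n(\varepsilon q)$ and covered by a block $B$ of $G=\GL_n(\varepsilon q)$ with $2'$-semisimple label $\tilde s$. The preceding analysis gives $n/d=3$, $q\equiv-\varepsilon\bmod 4$, and $C_G(\tilde s)\cong\GL_3(\varepsilon q^d)$ contained in an $F$-stable Levi $\mathbf L$ of $\GL_n$ with $\mathbf L^F=C_G(\tilde s)$.

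Applying Theorem~\ref{Bonnafe-Rouquier} to $B$ yields a Morita equivalence with a block $B_L$ of $\GL_3(\varepsilon q^d)$ in $\mathcal E(\GL_3(\varepsilon q^d),\tilde s)$. Since $\tilde s$ is central in $\GL_3(\varepsilon q^d)$, Jordan decomposition (Theorem~\ref{JordanDecomposition}) identifies this Lusztig series with the unipotent characters tensored by the linear character of $\tilde s$; so $B_L$ is Morita equivalent to the unique unipotent $2$-block of $\GL_3(\varepsilon q^d)$, namely its principal block, since $\ell=2$ divides $q-\varepsilon$. I then descend through $\GL_n\supset\SL_n\supset H$ on one side and $\GL_3(\varepsilon q^d)\supset\SL_3(\varepsilon q^d)\supset\PSL_3(\varepsilon q^d)$ on the other: the extension of Bonnafé--Rouquier in \cite{Bonnafe-Dat-Rouquier} transfers the Morita equivalence to the blocks covered in the derived subgroups, so that $b_{SL}$ is Morita equivalent to the principal block of $\SL_3(\varepsilon q^d)$. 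Since $n=3d$ is odd, the centres $Z(\SL_n(\varepsilon q))$ and $Z(\SL_3(\varepsilon q^d))$ both have odd order; the kernels of $\SL_n(\varepsilon q)\to H$ and $\SL_3(\varepsilon q^d)\to\PSL_3(\varepsilon q^d)$ are therefore $2'$-groups, and the $\ell'$-normal quotient result recalled in Section~\ref{preliminaries} descends the equivalence to $b$ and the principal block of $\PSL_3(q^d)$ (linear) or $\PSU_3(q^d)$ (unitary); any residual odd-index step between $H$ and $\PSL_n(\varepsilon q)$ is absorbed by Lemma~\ref{oddindex}.

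Theorem~\ref{semidihedral} then finishes the identification: the principal block of $\PSL_3(q^d)$ with $q^d\equiv-1\bmod 4$ lies in $(3B_1)$, and since $(3B_1)$ and $(3D)$ share a decomposition matrix (being distinguished only by Ext-quiver) Morita equivalence pins this down to $(3B_1)$ or $(3D)$; the principal block of $\PSU_3(q^d)$ with $q^d\equiv 1\bmod 4$ lies in $(3A_1)$. The defect-group orders follow from $\abs{\PSL_3(\varepsilon q^d)_2}=4\abs{q^d+\varepsilon}_2=4\abs{q+\varepsilon}_2$, using that $d$ is odd. The main technical obstacle is the descent step: ensuring that the Bonnafé--Rouquier Morita equivalence genuinely transfers from $B$ down to $b_{SL}$, and hence to $b$, which relies on the extension in \cite{Bonnafe-Dat-Rouquier} together with the $2'$-centre property that follows from the oddness of $n$.
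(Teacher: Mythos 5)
Your route diverges from the paper's after the first step. You agree with the paper in applying Bonnafé--Rouquier at the $\GL_n(\varepsilon q)$ level to see that $B$ is Morita equivalent to the principal block of $\GL_3(\varepsilon q^{n/3})$, but you then try to push a chain of \emph{Morita equivalences} all the way down to $\PSL_n(\varepsilon q)$ and $\PSL_3(\varepsilon q^{n/3})$. The paper does not do this: it computes the decomposition matrix of the principal block of $\GL_3(\varepsilon q^{n/3})$ (each row of the $\PSL_3$ matrix occurring twice, using $\GL_3=H'\times 2$ and Lemma~\ref{oddindex}), transports the \emph{decomposition matrix} to $B$ via Bonnafé--Rouquier, halves it using $\GL_n(\varepsilon q)=H\times 2$ to get the decomposition matrix of $B_H$, and then invokes Lemma~\ref{oddindex} for $\SL_n\le H$ and the odd central quotient to identify the decomposition matrix of $b$.

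The gap is in your descent step. You assert that ``the extension of Bonnafé--Rouquier in \cite{Bonnafe-Dat-Rouquier} transfers the Morita equivalence to the blocks covered in the derived subgroups,'' but that reference upgrades the equivalence to a splendid Rickard equivalence and controls defect groups; it does not give descent to derived subgroups. Crucially, Bonnafé--Rouquier applies to a connected reductive group whose dual centraliser lies in a \emph{proper} Levi, and the standing hypothesis here is that $b_{SL}$ is \emph{quasi-isolated} in $\SL_n$ --- so $C_{\PGL_n}(\bar s)$ is not contained in any proper Levi, and the theorem says nothing about $b_{SL}$ directly. Descending the $\GL_n$ Morita bimodule to a Morita equivalence for the covered block of $\SL_n$ needs a Clifford-theoretic compatibility argument you neither supply nor source. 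Your own conclusion betrays the difficulty: a genuine chain of Morita equivalences from $b$ to the principal block of $\PSL_3(q^{n/3})$ would put $b$ in class $(3B_1)$ exactly, not in ``$(3B_1)$ or $(3D)$'' (these are distinct Morita classes sharing a decomposition matrix); the paper claims only the latter precisely because its argument is at the decomposition-matrix level. (Minor: the chain ``$\GL_n\supset\SL_n\supset H$'' is inverted --- $H$ is the odd-determinant subgroup, so $\GL_n\supset H\supset\SL_n$.)
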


\begin{proof}
Using the previous notation of this section, we have that $C_G(\tilde{s})=\GL_3(\varepsilon q^{n/3})$, with $n/3$ odd and $q^{n/3}\equiv q\equiv-\varepsilon\bmod4$. If $n=3$ then $b$ is the principal block as in Theorem \ref{semidihedral}, so assume $n>3$. The decomposition matrices of the principal blocks of $\PSL_3(q^{n/3})$ for $q\equiv-1\bmod4$ and $\PSU_3(q^{n/3})$ for $q\equiv1\bmod4$ are
\begin{equation*}
\begin{pmatrix}
1&.&.\\1&1&.\\1&.&1\\1&1&1\\.&.&1\\.&1&.\\
\end{pmatrix}
\quad\text{and}\quad
\begin{pmatrix}
1&.&.\\1&1&.\\1&.&1\\1&1&1\\.&.&1\\2&1&1\\
\end{pmatrix}
\end{equation*}
respectively, with the last rows repeated $\abs{q+\varepsilon}_2-1$ times. Since $\abs{Z(\SL_3(\varepsilon q^{n/3}))}$ is odd, the principal block of $\SL_3(\varepsilon q^{n/3})$ is isomorphic to that of $\PSL_3(\varepsilon q^{n/3})$. Define $H^\prime$ to be the subgroup of $\GL_3(\varepsilon q^{n/3})$ consisting of elements whose determinant has odd order in $\mathbb{F}_{q^{n/3}}^\times$. Then since $\abs{q^{n/3}-\varepsilon}_2=2$ we have that $\abs{H^\prime:\SL_3(\varepsilon q^{n/3})}$ is odd, so by Lemma \ref{oddindex} the principal block of $H^\prime$ is Morita equivalent to that of $\SL_3(\varepsilon q^{n/3})$, and $\GL_3(\varepsilon q^{n/3})=H^\prime\times2$, so the principal block of $\GL_3(\varepsilon q^{n/3})$ has the corresponding decomposition matrix above with each row occurring twice.

Similarly, since $n$ is odd and $\abs{q-\varepsilon}_2=2$, if $H$ is the subgroup of $G=\GL_n(\varepsilon q)$ of elements with determinant of odd order then $G=H\times2$ and $b$ is Morita equivalent to the block $B_H$ of $H$ covered by the block $B$ of $G$.

Since $C_{\mathbf{G}}(\tilde{s})$ must be a Levi subgroup of $\mathbf{G}=\GL_n(\overline{\mathbb{F}_q})$, Theorem \ref{Bonnafe-Rouquier} implies that $B$ is Morita equivalent to the principal block of $C_G(\tilde{s})=\GL_3(\varepsilon q^{n/3})$ with decomposition matrix as described above. Then since $G=H\times2$, the ordinary characters of $B$ restrict irreducibly in pairs to those of $B_H$, so $B_H$, and hence also $b$, has decomposition matrix that of the principal block of $\PSL_3(\varepsilon q^{n/3})$. Therefore $b$ is in class $(3B_1)$ or $(3D)$ if $\varepsilon=1$, and is in class $(3A_1)$ if $\varepsilon=-1$.
\end{proof}

For the dihedral case we first calculate a decomposition matrix we will use in the proof. Throughout the rest of this section set $a=\abs{q+1}_2$ and $c=\abs{q-1}_2$. 

\begin{proposition}\label{gu2}
The decomposition matrix of the principal block $B$ of $\GU_2(q)$ is
\begin{equation*}
\begin{blockarray}{c(cc)c}
    1&1&.&a\text{ times}\\
    q&1&1&a\text{ times}\\
    q-1&.&1&\frac{1}{2}a\,(a-1)\text{ times}\\
    q+1&2&1&\frac{1}{2}a\,(c-1)\text{ times}\\
\end{blockarray};
\end{equation*}
the character degrees are shown on the left, and each row is repeated the number of times shown on the right.
\end{proposition}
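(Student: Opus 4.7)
I plan to first enumerate $\Irr(B)$ via Lusztig series, and then force the matrix entries using the identity $\chi(1) = d_{\chi,1} + (q-1)d_{\chi,2}$ for two Brauer characters of degrees $1$ and $q-1$. The main obstacle will be establishing that $l(B) = 2$ with second Brauer character of degree $q-1$; once this is in place, the decomposition numbers are forced by degree arithmetic alone.

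Since $\mathbf{G} = \GL_2$ has connected centre and $\GU_2(q)^* \cong \GU_2(q)$, Jordan decomposition (Theorem \ref{JordanDecomposition}) applies and $\Irr(B) \subseteq \mathcal{E}_2(\GU_2(q), 1) = \bigcup_t \mathcal{E}(\GU_2(q), t)$, with $t$ running over semisimple $2$-elements of $\GU_2(q)$ up to conjugacy. These split into three types: the $a$ central elements $\zeta I \in Z(\GU_2(q))_2$, each with $C_{G^*}(t) = G$, contributing the twisted unipotent characters $\{\hat{\zeta}, \mathrm{St} \otimes \hat{\zeta}\}$ of degrees $1$ and $q$; the $a(c-1)/2$ Weyl-orbits of regular non-central $2$-elements in the maximal torus $T_1 \cong C_{q^2-1}$ (the Levi factor of the proper $F$-stable parabolic of $\GU_2$), each centralised by $T_1$ and yielding a single character of degree $|G|_{p'}/|T_1| = q+1$; and the $a(a-1)/2$ Weyl-orbits of regular non-central $2$-elements in the other maximal torus $T_2 \cong C_{q+1}^2$, each centralised by $T_2$ and yielding a single character of degree $|G|_{p'}/|T_2| = q-1$. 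That all of these lie in a single block -- namely $B$ -- follows from a central-character computation: for any semisimple $2$-element $t$ the character of the ambient torus corresponding to $t$ has $2$-power order and so is trivial on the odd part $C_{(q+1)/a}$ of $Z(G)$, matching the central character of $1_G$ on $2$-regular classes. Hence the full $\mathcal{E}_2(G,1)$ sits in the principal block and gives the row multiplicities stated.

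To determine the columns I would use Lemma \ref{oddindex} to reduce to the index-$(q+1)/a$ subgroup $K$ of $\GU_2(q)$ containing $\SU_2(q) \cong \SL_2(q)$ with $[K : \SU_2(q)] = a$, and apply standard Clifford theory across the $2$-power quotient $K/\SU_2(q)$. The principal $2$-block of $\SL_2(q)$ in cross characteristic $2$ has its two simple modules (trivial and a second of degree $q - 1$ arising as the mod-$2$ reduction of $\mathrm{St} - 1_G$) well-described in the literature, and this structure lifts through the $C_a$-extension to $K$ and then, by the odd-index Morita equivalence, to $\GU_2(q)$, giving $l(B) = 2$ and $\phi_2(1) = q-1$.

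With $\phi_1(1) = 1$ and $\phi_2(1) = q-1$ in hand, the non-negative integer solutions of $\chi(1) = d_{\chi,1} + (q-1)d_{\chi,2}$ are uniquely forced for each of the four possible degrees: $(1,.)$ for degree $1$; $(.,1)$ for degree $q-1$; $(1,1)$ for degree $q$; and $(2,1)$ for degree $q+1$. The only competing solution $(q+1,.)$ for the last degree is excluded by noting that the corresponding ordinary characters are non-trivial principal-series constituents of $R_{T_1}^G(\theta_t)$, whose mod-$2$ reductions cannot be a sum of $q+1$ copies of the trivial module (their restrictions to a $2$-regular element of $T_1$ inducing a non-trivial $\theta_t$-eigenvalue preclude this). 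Combining with the multiplicities from Jordan decomposition completes the matrix.
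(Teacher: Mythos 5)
Your plan has a genuine error at its crux. You assert that the principal $2$-block of $\SL_2(q)$ ``has its two simple modules (trivial and a second of degree $q-1$),'' and you propose to lift $l=2$ from there to $\GU_2(q)$. But $\SL_2(q)$ (for odd $q$) has generalised quaternion Sylow $2$-subgroups, and the principal $2$-block has \emph{three} simple modules, not two -- precisely the situation of Corollary~\ref{quaternion} in this paper. The decomposition matrix of $\SU_2(q)\cong\SL_2(q)$ has three columns; this is the explicit starting point of the paper's own proof. The paper's argument then lifts this $3$-column matrix to the central product $A=\SU_2(q)\ast(q+1)$ (an index-$2$ subgroup of $\GU_2(q)$), and only after invoking Erdmann's result \cite[(4.3)]{Erdmann2} that $l(B)=2$ and $k(B)=a^2/2+2a$ does it conclude that two of the three Brauer characters must \emph{fuse} on induction from $A$ to $\GU_2(q)$. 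That fusion step is what makes $l(B)=2$ hold, and it is exactly the step your outline omits because you started from an incorrect count for $\SL_2(q)$.

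Two secondary points. First, your attempt to invoke Lemma~\ref{oddindex} with $N=K$ (the $a$-fold extension of $\SU_2(q)$) does not meet the hypotheses: $K$'s Sylow $2$-subgroup is a central product of a generalised quaternion group with a cyclic $2$-group, which is not dihedral or semidihedral, so the lemma does not apply. Second, even granting $\phi_1(1)=1$ and $\phi_2(1)=q-1$, your exclusion of the competing column vector $(q+1,\,.)$ is informal; a cleaner way is to note that decomposition numbers of a tame block are bounded by $2$, or to track the explicit reductions coming from the $\SU_2(q)$ matrix. The ordinary-character enumeration via Lusztig series in your first paragraph is fine and recovers the same row multiplicities as the paper, but without a correct determination of $l(B)$ and the fusion pattern, the column entries cannot be pinned down.
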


\begin{proof}
Note that $\SL_2(q)\cong \SU_2(q)$. First let $q\equiv-1\bmod4$. Then as in Corollary \ref{quaternion}, the decomposition matrix of the principal block $b$ of $\SU_2(q)$ is
\begin{equation*}
    \begin{blockarray}{(ccc)}
        1&.&.\\
        .&1&.\\
        .&.&1\\
        1&1&1\\
        1&1&.\\
        1&.&1\\
        .&1&1
    \end{blockarray}
\end{equation*}
with the final row repeated $a-1$ times. The central product $A=\SU_2(q)\ast(q+1)$ is an index-$2$ subgroup of $\GU_2(q)$, and the decomposition matrix of the principal block $B_A$ of $A$ is that of $b$ with each row occurring $a/2$ times.

By \cite[(4.3)]{Erdmann2} $l(B)=2$ and $k(B)=a^2/2+2a$. Therefore, on induction to $\GU_2(q)$, the second and third Brauer characters of $B_A$ must fuse, which forces the ordinary characters relating to the second and third rows and also the fifth and sixth rows above to fuse in pairs, while all the other ordinary characters must split to give the correct value for $k(B)$. Hence the decomposition matrix of $B$ is as claimed. 

If $q\equiv1\bmod4$ then the argument is identical with a slightly different matrix for $\SU_2(q)$; note also that then $B$ is as in Theorem \ref{semidihedral2}.
\end{proof}

\begin{theorem}\label{typeAdihedral}
If $b$ is a quasi-isolated block of (a non-exceptional odd cover of) $\PSL_n(q)$ or $\PSU_n(q)$ with dihedral defect groups (of order at least $8$), then $n/2$ is odd and either $q\equiv1\bmod4$, the defect groups are of order $\abs{q-1}_2$, and $b$ is in class (3A); or $q\equiv-1\bmod4$, the defect groups are of order $\abs{q+1}_2$, and $b$ is in class (3K).
\end{theorem}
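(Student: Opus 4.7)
The preceding analysis gives $n=2d$ with $d$ odd and $C_G(\tilde s)\cong\GL_2(\varepsilon q^d)$, so the defect group of $b$ is dihedral of order $\abs{q^{2d}-1}_2/2=\abs{q^2-1}_2/2$; this equals $\abs{q-1}_2$ if $q\equiv 1\bmod 4$ and $\abs{q+1}_2$ if $q\equiv -1\bmod 4$, using that $d$ odd forces $\abs{q^d\pm 1}_2=\abs{q\pm 1}_2$. So $n/2=d$ is odd and the defect orders are as claimed. The plan is to identify $b$ as Morita equivalent to the principal $2$-block of $\PSL_2(q^d)\cong\PSL_2(\varepsilon q^d)$, whereupon Theorem \ref{classification}, together with $q^d\equiv q\bmod 4$, places $b$ in class $(3A)$ when $q\equiv 1\bmod 4$ and in class $(3K)$ when $q\equiv -1\bmod 4$.

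First I apply Theorem \ref{Bonnafe-Rouquier} to the block $B$ of $G=\GL_n(\varepsilon q)$ covering $b_{SL}$: as $\mathbf{G}=\GL_n$ is self-dual and $C_\mathbf{G}(\tilde s)$ is a Levi subgroup, $B$ is Morita equivalent to the block of $\GL_2(\varepsilon q^d)$ in $\mathcal{E}(\GL_2(\varepsilon q^d),s)$, and since $s$ is central of odd order this is the principal block $B_0$ up to tensoring by the linear character $\hat s$. By Proposition \ref{gu2} with $q$ replaced by $q^d$ (for $\varepsilon=-1$) and its Ennola-dual analog for $\GL_2(q^d)$ (for $\varepsilon=1$), the ordinary character degrees of $B_0$ are distributed among $\{1,q^d,q^d-1,q^d+1\}$ with explicit multiplicities, which via Jordan decomposition lift to degrees $M\cdot\{1,q^d,q^d-1,q^d+1\}$ in $B$ with $M=\abs{G:\GL_2(\varepsilon q^d)}_{p'}$. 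Analogously to the semidihedral argument immediately above, the intersection $\GL_2(\varepsilon q^d)\cap\SL_n(\varepsilon q)$ is $\SL_2(\varepsilon q^d)\cdot\mu_m$, where $\mu_m$ is cyclic of order $m=(q^d-\varepsilon)/(q-\varepsilon)$ (odd); combining the Bonnafe-Rouquier equivalence with its refinement \cite{Bonnafe-Dat-Rouquier} transfers $b_{SL}$ to the principal block of this intersection, which by Lemma \ref{oddindex} is Morita equivalent to the principal block of $\SL_2(q^d)$. This latter block has generalised quaternion defect group and, by Corollary \ref{quaternion}, lies in quaternion class $(3A)$ or $(3K)$ according to $q\bmod 4$. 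Finally, Theorem \ref{BonnafeRouquiercentral} applied to the central $2$-subgroup $\{\pm I\}\leq Z(\SL_2(q^d))$ transfers the equivalence to $b$ and to the principal block of $\PSL_2(q^d)$, placing $b$ in the claimed dihedral class.

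The main obstacle is the passage through the derived subgroups: Theorem \ref{Bonnafe-Rouquier} as cited concerns Levi subgroups of the ambient reductive group, and extending its Morita equivalence from the pair $\GL_n\supset\GL_2(\varepsilon q^d)$ to $\SL_n\supset\SL_2(\varepsilon q^d)\cdot\mu_m$ requires the refinement in \cite{Bonnafe-Dat-Rouquier} and a careful matching of normal subgroup structures, a subtlety that did not arise in the semidihedral case because there $n$ was odd and a direct product decomposition $\GL_n=H\times 2$ was available. An alternative that sidesteps this is to compute the ordinary character degrees of $b$ directly from those of $B$ via Clifford theory in the tower $\GL_n(\varepsilon q)\supset\SL_n(\varepsilon q)\twoheadrightarrow$ (the odd cover of $\PSL_n(\varepsilon q)$), tracking the stabilisers in $\GL_n/\SL_n$ of the four types of character of $B$, and then applying the character-degree criterion of Section \ref{preliminaries} (Tame Blocks) to identify the Morita class. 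Either route confirms that $b$ lies in class $(3A)$ if $q\equiv 1\bmod 4$ and in $(3K)$ if $q\equiv -1\bmod 4$.
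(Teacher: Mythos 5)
Your primary route contains a genuine gap that you yourself flag but do not resolve. Theorem \ref{Bonnafe-Rouquier} is stated for a block whose semisimple label has centraliser inside a proper Levi subgroup of the \emph{dual} group, and at the $\SL_n$ level the hypothesis fails by construction: $b$ is quasi-isolated, meaning precisely that $C_{\mathbf{G}^*}(s)$ (with $\mathbf{G}^* = \PGL_n$) lies in no proper Levi, so Bonnaf\'e--Rouquier gives no reduction for $b_{SL}$ at all. The refinement in \cite{Bonnafe-Dat-Rouquier} that the paper invokes adds that the Morita bimodule preserves defect groups; it is not a descent theorem, and there is no general mechanism by which the bimodule on the $\GL_n$--$\GL_2(\varepsilon q^d)$ side restricts to a Morita bimodule on the $\SL_n$--$(\SL_n\cap\GL_2(\varepsilon q^d))$ side. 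You also misdescribe the intersection: $\GL_2(\varepsilon q^d)\cap\SL_n(\varepsilon q)$ is the preimage under $\det_2$ of the $\gcd(m,q^d-\varepsilon)$-th roots of unity, not ``$\SL_2\cdot\mu_m$'' with $\mu_m$ of order $m$; it is still an odd-index overgroup of $\SL_2(\varepsilon q^d)$, but the slip is symptomatic of the imprecision here.

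Your alternative route --- computing the ordinary degrees of $b$ from those of $B$ by Clifford theory down the tower $\GL_n(\varepsilon q)\supset\SL_n(\varepsilon q)\twoheadrightarrow \PSL_n(\varepsilon q)$ and invoking the degree criterion of Section \ref{preliminaries} --- is essentially the correct strategy and is the one the paper follows, but you only sketch it. What has to be done, and what you omit, is to deal carefully with $n$ being even (so $\GL_n(\varepsilon q)$ is \emph{not} $H\times 2$ as in the semidihedral case): the paper inserts the intermediate central product $A=H\ast(q-\varepsilon)$ of index $2$ in $\GL_n(\varepsilon q)$, uses Lemma \ref{oddindex} to identify $b$ with a block $B_{H/2}$, compares the known decomposition matrix of $B$ with that of $B_A$ (rows repeated $a/2$ or $c/2$ times), and deduces from the defect drop which ordinary and Brauer characters of $B$ must fuse versus split on restriction. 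That fusion/splitting analysis is the actual content of the proof. Your write-up identifies the right target ($\PSL_2(q^d)$) and the right obstacle, but does not supply the argument that closes it.
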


\begin{proof}
Using the previous notation of this section, by those considerations, $C_G(\tilde{s})=\GL_2(\varepsilon q^{n/2})$ with $n/2$ odd. If $n=2$ then $b$ is the principal block as in Theorem \ref{classification}, so assume $n>2$. 

Consider $b$ as a block of $\SL_n(\varepsilon q)/2$, an odd cover of $\PSL_n(\varepsilon q)$. Set $H$ as the subgroup of $G=\GL_n(\varepsilon q)$ consisting of elements with odd-order determinant, let $B_{H/2}$ be a block of $H/2$ covering $b$, which by Lemma \ref{oddindex} is Morita equivalent to $b$, and let $B_H$ be the block of $H$ containing $B_{H/2}$ (choose $B_{H/2}$ so that the block $B$ of $G$ covers $B_H$). The central product $A=H\ast(q-\varepsilon)$ is an index-$2$ subgroup of $G$, and $\abs{A:H}=\abs{q-\varepsilon}_2/2$. Let $B_A$ be the unique block of $A$ covering $B_H$, so $B$ is the unique block of $G$ covering $B_A$.

Since $C_{\mathbf{G}}(\tilde{s})$ must be a Levi subgroup of $\mathbf{G}=\GL_n(\overline{\mathbb{F}_q})$, Theorem \ref{Bonnafe-Rouquier} implies that $B$ is Morita equivalent to the principal block of $C_G(\tilde{s})=\GL_2(\varepsilon q^{n/2})$ with the same decomposition matrix. The decomposition matrix of the principal block of $\GU_2(q^{n/2})$ is as in Proposition \ref{gu2}, replacing $q$ with $t=q^{n/2}$, and that of $\GL_2(q^{n/2})$, calculated using \cite{James}, is
\begin{equation*}
\begin{blockarray}{c(cc)c}
    1&1&.&c\text{ times}\\
    t&1&1&c\text{ times}\\
    t-1&.&1&\frac{1}{2}c\,(a-1)\text{ times}\\
    t+1&2&1&\frac{1}{2}c\,(c-1)\text{ times}\\
\end{blockarray};
\end{equation*}
note that since $n/2$ is odd, $\abs{t\pm1}_2=\abs{q\pm1}_2$. 

The decomposition matrix of $B_A$ is that of $B_H$ with each row repeated $a/2$ or $c/2$ times, depending on $q\bmod4$; the characters of $B_{H/2}$ are then a subset of those of $B_H$, and $B_{H/2}$ has dihedral defect groups. Since the defects of $B$ and $B_A$ are different, all the height zero characters of $B$ must fuse on restriction to $B_A$, giving two possible height zero characters of $B_{H/2}$. Then one of the height one characters, and hence the second Brauer character, of $B$ must split on restriction to $A$. It follows that the decomposition matrix of $B_{H/2}$, hence that of $b$, must be that of class $(3A)$ if $q\equiv1\bmod4$ or $(3K)$ if $q\equiv-1\bmod4$.
\end{proof}

\subsection{Exceptional Schur Covers}

The following are the maximal odd covers of finite simple groups of Lie type that are not quotients of the corresponding simply connected group: $3\cdot\PSL_2(9)$, $3^2\cdot\PSU_4(3)$, $3\cdot B_3(3)$, and $3\cdot G_2(3)$. The decomposition matrices of these groups are all in the Modular Atlas \cite{ModularAtlas}. The new $2$-blocks occurring -- that is those that are not blocks of the simple group -- for $3^2\cdot\PSU_4(3)$ and $3\cdot G_2(3)$ all have either maximal or zero defect, while $3\cdot B_3(3)=3\cdot O_7(3)$ has two blocks with dihedral defect groups of order $8$ in class $(2A)$. Finally, $3\cdot\PSL_2(9)\cong3\cdot\Alt(6)$ is considered in the following section.

\subsection{Alternating Groups}

The alternating groups are investigated via the representation theory of the symmetric groups, which is well understood (see \cite{James-Kerber} for example). The ordinary characters of $\Sym(n)$ correspond to the partitions of $n$, and the process of repeatedly removing $\ell$-hooks -- partitions of the form $(\ell-m,1^m)$ for some $m$ -- from a partition results in a uniquely defined $\ell$-core. Two characters are in the same $\ell$-block if and only if the $\ell$-cores of their partitions are the same, so the blocks are labelled by $\ell$-cores. The weight of a character, and thus the weight of its block, is the number of $\ell$-hooks removed to get to its $\ell$-core. The defect groups of a block of weight $w$ are then conjugate to Sylow $p$-subgroups of $\Sym(\ell w)$. 

\begin{theorem}
If $b$ is a block of an alternating group $\Alt(n)$ with dihedral defect groups (of order at least $8$), then they must be of order $8$ and either:
\begin{enumerate}[label=(\roman*)]
\item $n=6$ and $b$ is in class $(3A)$, or;
\item $n=t+6$ where $t\geq1$ is a triangular number and $b$ is in class $(3B)$.
\end{enumerate}
\end{theorem}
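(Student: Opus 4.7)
The plan is to reduce to 2-blocks of $\Sym(n)$. By Lemma \ref{evenindex}(ii), since $\abs{\Sym(n):\Alt(n)}=2$, each 2-block $b$ of $\Alt(n)$ is covered by a unique 2-block $B$ of $\Sym(n)$; by Lemma \ref{Knorr}, $P=D\cap\Alt(n)$ for defect groups $P$ of $b$ and $D$ of $B$. Since the defect groups of 2-blocks of $\Sym(n)$ are Sylow 2-subgroups of $\Sym(2w)$ (where $w$ is the weight), I would analyse each weight separately: for $w\leq 2$, $\abs{D\cap\Alt(n)}\leq 4$; for $w=3$, $D\cong D_8\times C_2$, and a direct check on even permutations of $\{1,\dots,6\}$ shows $D\cap\Alt(n)\cong D_8$; for $w\geq 4$, the exponent of the Sylow 2-subgroup of $\Alt(2w)$ is at most $2w$, while $\abs{D}/4$ grows exponentially in $w$, so $D\cap\Alt(n)$ cannot contain the cyclic subgroup of order $\abs{D}/4$ that a dihedral group of order $\abs{D}/2$ would require. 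So only $w=3$ yields dihedral defect groups of order $\geq 8$, with $\abs{P}=8$.

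Since the 2-cores of partitions are precisely the staircase partitions $(k,k-1,\dots,1)$ of size $k(k+1)/2$, a weight-3 block of $\Sym(n)$ exists iff $n-6=k(k+1)/2$ for some $k\geq 0$, giving $n=6$ or $n=6+t$ with $t\geq 1$ triangular. For $n=6$, the block is the principal block of $\Alt(6)\cong\PSL_2(9)$, and since $\abs{9-1}_2=8$ this lies in class $(3A)$ by Theorem \ref{classification}.

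For $n=6+t$ with $t\geq 1$ triangular, I would verify $b$ lies in class $(3B)$. The 2-quotient $(\mu_0,\mu_1)$ of any partition in $B$ satisfies $\abs{\mu_0}+\abs{\mu_1}=3$, so no such partition is self-conjugate (self-conjugacy requires $\mu_0=\mu_1^T$, forcing $\abs{\mu_0}=\abs{\mu_1}$); thus by Clifford theory the ten partitions in $B$ pair into five conjugate pairs, giving $k(b)=5$. One can check that $l(B)=3$ and, by counting 2-regular $\Alt(n)$-classes in the block, that $l(b)=3$, placing $b$ in one of $(3A)$, $(3K)$, $(3B)$. To conclude I would compute the five ordinary character degrees via the hook length formula on representatives of the conjugate pairs, and verify that the height-one character is not of largest degree (ruling out $(3A)$) and that the largest degree does not equal the sum of the other three height-zero degrees (ruling out $(3K)$). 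The hard part will be performing this comparison uniformly across all triangular $t$; the natural route is to invoke Scopes-style Morita equivalences between weight-3 2-blocks of the symmetric groups, together with their compatibility with the index-two subgroup $\Alt(n)\trianglelefteq\Sym(n)$, to reduce the infinite family to finitely many base cases, with the principal block of $\Alt(7)$ serving as the directly verifiable example in class $(3B)$.
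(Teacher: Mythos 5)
Your proposal follows essentially the same route as the paper: identify the weight of the covering $\Sym(n)$-block as $3$, note that $2$-cores are exactly the triangular (staircase) partitions so that $n=6$ or $n=6+t$ with $t$ triangular, treat $n=6,7$ directly, and reduce the remaining cases via Scopes-type Morita equivalences of weight-$3$ $2$-blocks of symmetric groups. The paper rules out $w\neq 3$ more cleanly by observing that the defect groups of $b$ are isomorphic to Sylow $2$-subgroups of $\Alt(2w)$ (avoiding your exponent estimate), and pins down class $(3B)$ by fusing the rows of the explicit decomposition matrix of the $\Sym(9)$ block $(2,1)$ using $k(b)=5$ from the tame-block count rather than re-deriving the degree criteria from the hook length formula, but these are tactical rather than strategic differences.
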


\begin{proof}
There is a unique block $B$ of $\Sym(n)$ covering $b$. If $B$ has weight $w$ then its defect groups are conjugate to the Sylow $2$-subgroups of $\Sym(2w)$, so the defect groups of $b$ are isomorphic to the Sylow $2$-subgroups of $\Alt(2w)$. Therefore we must have $w=3$ and the defect groups are of order $8$.

The $2$-blocks of $\Sym(n)$ are labelled by $2$-cores, which are precisely the triangular partitions. In \cite[Lem. 2.4, Thm 4.2]{Scopes} Scopes gives a method of showing that certain blocks of $\Sym(n)$ and $\Sym(n+m)$ of equal weight with $m\geq w$ are Morita equivalent with the same decomposition matrices. From this we get that all $2$-blocks of symmetric groups of weight $3$, other than perhaps the blocks labelled $\emptyset$ of $\Sym(6)$ and $(1)$ of $\Sym(7)$, are Morita equivalent to the block $(2,1)$ of $\Sym(9)$. 

The (principal) blocks of $\Alt(6)$ and $\Alt(7)$ covered by these blocks $\emptyset$ and $(1)$ can be checked individually and found to be as stated; note also that $\Alt(6)\cong \PSL_2(9)$. Suppose then that $B$ is Morita equivalent to the block $(2,1)$ of $\Sym(9)$, so has the following decomposition matrix:
\begin{equation*}
\begin{pmatrix}
1&.&.\\1&.&.\\1&1&.\\1&1&.\\1&.&1\\1&.&1\\1&1&1\\1&1&1\\.&1&.\\.&1&.\\
\end{pmatrix}.
\end{equation*}
Since $b$ has defect $3$ it has exactly five ordinary characters, so the ten ordinary characters of $B$ must fuse in the obvious way, giving that $b$ has decomposition matrix that of class $(3B)$.
\end{proof}

Additionally, the only odd covers of alternating groups are the exceptional covers $3\cdot\Alt(6)$ and $3\cdot\Alt(7)$, whose Sylow $2$-subgroups are also dihedral of order $8$. Checking these in GAP or the Modular Atlas shows that $3\cdot\Alt(6)$ has two additional blocks of defect $3$ in class $(3K)$, while $3\cdot\Alt(7)$ has two in class $(2B)$.

The Sylow $2$-subgroups of $\Alt(2w)$ are not semidihedral for any $w$, so there are no blocks of alternating groups with semidihedral defect groups.

\subsection{Sporadic Groups}

None of the sporadic groups has dihedral Sylow $2$-subgroups, so only non-principal blocks may have dihedral defect groups, and only the Mathieu group $M_{11}$ has semidihedral Sylow $2$-subgroups. Landrock \cite{Landrock} described the non-principal $2$-blocks of all sporadic groups, and their defect groups. There are several blocks with dihedral defect groups -- they are all of order only $8$, but we give their decomposition matrices anyway -- and there are two blocks with semidihedral defect groups, of order $16$. The ordinary character degrees can be found using GAP and its Atlas database of ordinary character tables, and, in most cases this is sufficient to deduce the decomposition matrix and Morita equivalence class. We also check any additional $2$-blocks of odd covers of the sporadic groups with defect at least $3$ but not maximal; these only occur for $3\cdot Fi^\prime_{24}$, and it can be seen from the number of characters and heights that the defect groups of these blocks are indeed dihedral.

\begin{theorem}
The blocks of sporadic groups with dihedral defect groups (of order at least $8$), with their decomposition matrices and character degrees, are as follows.

\bigbreak
\begin{center}
\begin{tabular}{|L|LL|}
\hline
Fi_{23}\ (2B)&97\,976\,320&166\,559\,744\\
\hline
97\,976\,320&1&.\\
166\,559\,744&.&1\\
166\,559\,744&.&1\\
264\,536\,064&1&1\\
264\,536\,064&1&1\\
\hline
\end{tabular}
\end{center}
\bigbreak

\begin{center}
\begin{tabular}{|L|LL|}
\hline
B\ (2B)&2\,642\,676\,197\,359\,616&9\,211\,433\,539\,600\,384\\
\hline
2\,642\,676\,197\,359\,616&1&.\\
9\,211\,433\,539\,600\,384&.&1\\
9\,211\,433\,539\,600\,384&.&1\\
11\,854\,109\,736\,960\,000&1&1\\
11\,854\,109\,736\,960\,000&1&1\\
\hline
\end{tabular}
\end{center}
\bigbreak

\begin{center}
\begin{tabular}{|L|LLL|}
\hline
Fi_{24}^\prime\ (3A)&38\,467\,010\,560&38\,641\,860\,608&107\,008\,229\,376\\
\hline
38\,641\,860\,608&.&1&.\\
77\,108\,871\,168&1&1&.\\
145\,650\,089\,984&.&1&1\\
184\,117\,100\,544&1&1&1\\
222\,758\,961\,152&1&2&1\\
\hline
\end{tabular}
\end{center}
\bigbreak

\begin{center}
\begin{tabular}{|L|LLL|}
\hline
O'N\ (3K)&10\,944&13\,376&13\,376\\
\hline
10\,944&1&.&.\\
13\,376&.&1&.\\
13\,376&.&.&1\\
26\,752&.&1&1\\
37\,696&1&1&1\\
\hline
\end{tabular}
\end{center}
\bigbreak

\begin{center}
\begin{tabular}{|L|LLL|}
\hline
He\ (3B)&1920&4352&4608\\
\hline
1920&1&.&.\\
4352&.&1&.\\
6272&1&1&.\\
6528&1&.&1\\
10\,880&1&1&1\\
\hline
\end{tabular}
\quad
\begin{tabular}{|L|LLL|}
\hline
Suz\ (3B)&66\,560&79\,872&102\,400\\
\hline
66\,560&1&.&.\\
79\,872&.&1&.\\
146\,432&1&1&.\\
168\,960&1&.&1\\
248\,832&1&1&1\\
\hline
\end{tabular}
\end{center}
\bigbreak

\begin{center}
\begin{tabular}{|L|LLL|}
\hline
Co_1\ (3B)&40\,370\,176&150\,732\,800&313\,524\,224\\
\hline
40\,370\,176&1&.&.\\
150\,732\,800&.&1&.\\
191\,102\,976&1&1&.\\
464\,257\,024&.&1&1\\
504\,627\,200&1&1&1\\
\hline
\end{tabular}
\end{center}
Additionally, $3\cdot Fi^\prime_{24}$ has two blocks with dihedral defect groups and the following decomposition matrix.
\bigbreak
\begin{center}
\begin{tabular}{|L|LL|}
\hline
3\cdot Fi^\prime_{24}\ (2A)&55\,349\,084\,160&80\,256\,172\,032\\
\hline
80\,256\,172\,032&.&1\\
80\,256\,172\,032&.&1\\
135\,605\,256\,192&1&1\\
135\,605\,256\,192&1&1\\
215\,861\,428\,224&1&2\\
\hline
\end{tabular}
\end{center}
\bigbreak
\end{theorem}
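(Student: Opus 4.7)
The plan is to leverage Landrock's classification of non-principal $2$-blocks of sporadic groups together with the identification criteria recalled in Section \ref{preliminaries}. Since no sporadic group has dihedral Sylow $2$-subgroups, the only candidate blocks are non-principal, and \cite{Landrock} gives a complete list of non-principal $2$-blocks together with their defect groups: from that list one reads off that the blocks with dihedral defect groups all have defect groups of order exactly $8$. So the first step is simply to extract from \cite{Landrock} the finite collection of blocks of sporadic groups with dihedral defect group $D_8$, which is already known to consist of two blocks with two simple modules (those of $Fi_{23}$ and the Baby Monster $B$) and several blocks with three simple modules (in $Fi_{24}^\prime$, $O'N$, $He$, $Suz$, and $Co_1$).

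The second step is to read off, for each such block, the degrees of the ordinary characters it contains. This is done with GAP using the \textsf{CTblLib} Atlas database of ordinary character tables, block-indexing by central character congruences modulo $2$. Every block of defect $3$ with $D_8$ defect groups contains exactly $k(B)=2^{3-2}+3=5$ ordinary characters: four of height zero and one of height one. In the tables displayed in the theorem statement, the four height-zero rows and the single height-one row appear with the corresponding degrees next to them.

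The third step is to deduce the decomposition matrix and Morita equivalence class from the character degrees, using the procedure described in Section \ref{preliminaries}: identify the height-one character; if its degree is the largest, the block is in $(1A)$, $(2A)$, or $(3A)$ depending on the number of distinct height-zero degrees; otherwise compare the largest height-zero degree with the remaining ones to distinguish $(2B)$, $(3K)$, and $(3B)$. Applying this to the degrees extracted in step two places $Fi_{23}$ and $B$ in $(2B)$; $Fi_{24}^\prime$ in $(3A)$; $O'N$ in $(3K)$; and $He$, $Suz$, $Co_1$ in $(3B)$. The decomposition matrices are then the ones listed in Theorem \ref{classification}, which, after matching rows to characters by degree, give precisely the matrices displayed.

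The fourth step handles the odd covers. By Remark \ref{reductionremark}(i), any quasi-simple group carrying a block in classes $(3A)$, $(3B)$, $(3K)$, $(2A)$, or $(2B)$ must have odd centre, so we must inspect odd covers of the sporadic groups. Only a handful of sporadic groups have nontrivial odd Schur multipliers, and for almost all of them the new $2$-blocks of those covers are of defect zero or maximal defect; the sole exception is $3\cdot Fi^\prime_{24}$, where the Modular Atlas \cite{ModularAtlas} (or a GAP computation) exhibits two faithful blocks of defect $3$ with $D_8$ defect groups, whose character degrees place them in class $(2A)$ by the same procedure. The main obstacle is not conceptual but bookkeeping: one must be careful that the block assignments from GAP really do give the blocks with dihedral defect groups (as opposed to, say, the semidihedral block of $M_{11}$ or principal blocks with different defect-group structure) and that the matching of ordinary character rows to decomposition-matrix rows is correct; here the height conditions, together with the unique height-one row of multiplicity $2^{n-2}-1=1$, pin down the assignment.
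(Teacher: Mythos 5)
Your proposal is correct and takes essentially the same route as the paper: restrict attention to non-principal blocks via Landrock's list (since no sporadic group has dihedral Sylow $2$-subgroups), read off ordinary character degrees from the Atlas data in GAP, and then apply the degree-based identification procedure from Section \ref{preliminaries} to pin down the decomposition matrix and Morita class, finishing by checking the odd covers and finding only $3\cdot Fi_{24}^\prime$ contributes.
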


\begin{theorem}
The blocks of sporadic groups with semidihedral defect groups, with their decomposition matrices and character degrees, are as follows.
\bigbreak
\begin{center}
\begin{tabular}{|L|LLL|}
\hline
M_{11} \ (3B_1)&1&10&44\\
\hline
1&1&.&.\\
10&.&1&.\\
10&.&1&.\\
10&.&1&.\\
11&1&1&.\\
44&.&.&1\\
45&1&.&1\\
55&1&1&1\\
\hline
\end{tabular}
\quad
\begin{tabular}{|L|LLL|}
\hline
HN \ (3B_1)\text{ or }(3D)&214\,016&1\,361\,920&2\,985\,984\\
\hline
214\,016&1&.&.\\
1\,361\,920&.&1&.\\
1\,361\,920&.&1&.\\
1\,361\,920&.&1&.\\
1\,575\,936&1&1&.\\
2\,985\,984&.&.&1\\
3\,200\,000&1&.&1\\
4\,561\,920&1&1&1\\
\hline
\end{tabular}
\end{center}
The Monster group has a block with one of the following decomposition matrices:
\bigbreak
\begin{center}
\begin{tabular}{|L|LLL|LLL|}
\hline
M\ (3B_2)\text{ or }(3C_{2,1})&\varphi_1&\varphi_2&\varphi_3&\varphi_1&\varphi_2&\varphi_4\\
\hline
5\,514\,132\,424\,881\,463\,208\,443\,904&1&.&.&1&.&.\\
5\,514\,132\,424\,881\,463\,208\,443\,904&1&.&.&1&.&.\\
5\,514\,132\,424\,881\,463\,208\,443\,904&1&.&.&1&.&.\\
9\,416\,031\,858\,681\,585\,751\,556\,096&.&1&.&.&1&.\\
14\,930\,164\,283\,563\,048\,960\,000\,000&1&1&.&1&1&.\\
124\,058\,385\,593\,021\,471\,188\,320\,256&.&1&1&.&.&1\\
129\,572\,518\,017\,902\,934\,396\,764\,160&1&1&1&1&.&1\\
138\,988\,549\,876\,584\,520\,148\,320\,256&1&2&1&1&1&1\\
\hline
\end{tabular}
\end{center}
where $\varphi_1=5\,514\,132\,424\,881\,463\,208\,443\,904,$ $\varphi_2=9\,416\,031\,858\,681\,585\,751\,556\,096,$ $\varphi_3=114\,642\,353\,734\,339\,885\,436\,764\,160,$ $\varphi_4=124\,058\,385\,593\,021\,471\,188\,320\,256$.
\end{theorem}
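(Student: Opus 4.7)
The plan is to proceed case by case, combining Landrock's \cite{Landrock} classification of non-principal $2$-blocks of sporadic groups together with their defect groups, and ordinary character data from the Atlas (accessed via GAP). From Landrock's tables one reads off the non-principal blocks whose defect groups are semidihedral, and since $M_{11}$ is the only sporadic simple group with semidihedral Sylow $2$-subgroups, the principal block only needs to be examined there. In each case the block has defect $n=4$, so a defect group of order $16$; this can be seen directly from the fact that the repeated row appears $2^{n-2}-1=3$ times. For each such block, the ordinary characters lying in it are determined by its central character, so their degrees are extracted directly from the Atlas.

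To identify the decomposition matrix among the templates of Theorem \ref{semidihedral}, I would use that the height of $\chi$ in a block of defect $d$ equals $v_2(\chi(1))+d-v_2(\abs{G})$, where $v_2$ is the $2$-adic valuation. The eight ordinary degrees partition, via their $2$-adic valuations, into four of height $0$, three equal degrees of height $1$ (the repeated row), and one degree of the large height $n-2=2$, as required for a semidihedral block with $l(B)=3$. Each template of Theorem \ref{semidihedral} presents the four height-zero degrees as fixed linear combinations of three putative simple Brauer character degrees, so matching these combinations to the extracted data pins down the template. For $M_{11}$ the decomposition matrix is already in the Modular Atlas \cite{ModularAtlas}, and a Magma check confirms the class is $(3B_1)$, correcting the entry in \cite{Erdmannbook}. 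For $HN$, the four height-zero degrees admit the unique decomposition $\varphi_1,\ \varphi_1+\varphi_2,\ \varphi_1+\varphi_3,\ \varphi_1+\varphi_2+\varphi_3$ with $\varphi_2$ the repeated-row degree and $\varphi_3$ the large-height degree, giving the $(3B_1)/(3D)$ decomposition matrix; as remarked after Theorem \ref{semidihedral} these two classes cannot be distinguished from ordinary-degree data, and both are recorded.

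The main obstacle is the Monster. Its candidate non-principal block again yields eight ordinary degrees partitioning as $4+3+1$ by height, but here the four height-zero degrees admit two arithmetically consistent decompositions: one using a third Brauer degree $\varphi_3$, fitting the $(3B_2)$ template with large-height row $2\varphi_1+\varphi_2+\varphi_3$; the other using a third Brauer degree $\varphi_4$, fitting the $(3C_{2,1})$ template. Since the $2$-modular irreducible character degrees of the Monster are not known, the two options cannot be distinguished from the available data, so the theorem must record both possible decomposition matrices. This is exactly the $n=4$ ambiguity that appears in Theorem \ref{semidihedral}: either $(3B_2)$ or $(3C_{2,1})$ occurs among finite groups, but not both.
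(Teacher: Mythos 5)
Your proposal is correct and follows essentially the same route as the paper: use Landrock's tables (plus the fact that only $M_{11}$ among sporadic simples has semidihedral Sylow $2$-subgroups) to locate the candidate blocks, pull the ordinary degrees from the Atlas, use the height formula to sort characters into the $4+3+1$ height pattern, and match the height-zero degrees against the templates of Theorem \ref{semidihedral}, using Magma for $M_{11}$ and recording both options for the Monster. The only additional content in the paper is an (unsuccessful) attempt to distinguish the Monster's two candidate matrices by tensoring projective characters, and an explicit note that odd covers of the sporadics contribute no further semidihedral blocks; neither changes the conclusion.
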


The block of $HN$ is in one of the two Morita equivalence classes with its decomposition matrix, while the principal block of $M_{11}$ is known to be Morita equivalent to that of $\PSL_3(3)$ (a calculation in Magma). As seen above, the decomposition matrix of the block of M cannot be identified from just the ordinary character degrees; neither can it be identified by tensoring any of the possible projective characters of this block or those of the defect zero blocks of M with irreducible characters.

\section{Semidihedral Class $(2B_1)$}\label{2B1}

As according to the latter half of Proposition \ref{reduction3}, we consider the blocks of quasi-simple groups with dihedral defect groups in class $(3K)$, as have been described throughout Section \ref{proof}, and show that those with defect at least $4$ are not covered by blocks with semidihedral defect groups in class $(2B_1)$. There are such blocks in class $(3K)$ with defect only $3$ in $3\cdot\Alt(6)$ and $O'N$, and the semidihedral class $(2B_1)$ for defect $4$ does occur as a block of $3\cdot M_{10}$ covering this block of $3\cdot\Alt(6)$, while the covering block of $O'N.2$ has dihedral defect groups (a simple calculation in Magma). Other than these there are the quasi-isolated blocks of odd covers of $\PSL_n(\varepsilon q)$ and the unipotent blocks of $E_7(q)$, both for $q\equiv-1\bmod4$; we must also consider the non-quasi-isolated blocks of quasi-simple groups of Lie type that are Morita equivalent to these blocks via Theorem \ref{Bonnafe-Rouquier}.  

\begin{proposition}
Let $b$ be a block of a finite quasi-simple group of Lie type $G$ with dihedral defect groups in class $(3K)$. If $B$ is the block of a group $G.2$ covering $b$, then $B$ does not have semidihedral defect groups in class $(2B_1)$.
\end{proposition}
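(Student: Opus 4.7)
The plan is to derive a contradiction by combining Clifford theory with the rigid degree structure imposed by the $(3K)$ and $(2B_1)$ decomposition matrices. By Lemma \ref{Fong1} we may assume $b$ is $G.2$-invariant, so Lemma \ref{evenindex}(iii) gives $|D_B| = 2|D_b|$, consistent with $B$ semidihedral of order $2^n$ covering $b$ dihedral of order $2^{n-1}$.

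First I would analyse the outer action on $\IBr(b)$. Since $|\IBr(b)|=3$ and $|\IBr(B)|=2$, Lemma \ref{evenindex}(i) forces exactly one fixed Brauer character and one swapped pair. The unique non-trivial column-automorphism of the $(3K)$ decomposition matrix swaps the two columns appearing symmetrically in rows 2, 3 and 5, so after relabelling we may assume $\psi_1$ is fixed and $\psi_2\leftrightarrow\psi_3$. An ordinary character of $b$ is then $G.2$-invariant exactly when its decomposition row is fixed by this swap, with possible ambiguity only when several ordinary characters share the same decomposition row (the repeated row~5).

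Next, a combinatorial count. Writing $i$ for the number of $G.2$-invariant and $m$ for the number of non-invariant characters of $b$, the relations $i+m = k(b)=2^{n-3}+3$ and $2i+m/2 = k(B)=2^{n-2}+3$ force $i=2^{n-3}+1$ and $m=2$. A height argument is decisive: a fused non-invariant pair of height $h$ yields a character of $B$ of height $h+1$, but $(2B_1)$ admits no characters of height exceeding one. Hence no non-invariant pair lies among the height-one characters of $b$, so all $2^{n-3}-1$ copies of row~5 are $G.2$-invariant, and the two non-invariants are precisely the height-zero characters corresponding to rows~2 and~3 (necessarily of equal degree, so $\psi_2(1)=\psi_3(1)$).

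The degree calculation closes the argument. The four height-zero characters of $B$ are two doubled covers of row~1 (of degree $\psi_1(1)$) and two of row~4 (of degree $\psi_1(1)+\psi_2(1)+\psi_3(1)$); matching against the $(2B_1)$ decomposition matrix yields $\varphi_1(1)=\psi_1(1)$ and $\varphi_2(1)=\psi_2(1)+\psi_3(1)$. Every height-one character of $B$ then has common degree $2\varphi_1(1)+\varphi_2(1)=2\psi_1(1)+\psi_2(1)+\psi_3(1)$, yet those obtained as doubled covers of invariant copies of row~5 must have degree $\psi_2(1)+\psi_3(1)$. Equating yields $\psi_1(1)=0$, which is absurd. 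The main bookkeeping obstacle is confirming that the height constraint really does exclude any non-invariant pair among the several copies of row~5; once that is in hand, the degree mismatch is immediate.
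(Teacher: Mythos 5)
Your combinatorial setup is correct as far as it goes — Fong's reduction, the Clifford‐theoretic counting giving $i = 2^{n-3}+1$ invariant characters and $m=2$ fused ones, the observation that fusing a height-one pair would produce a forbidden height-two character of $B$, and the identification of the column-swap $\psi_2\leftrightarrow\psi_3$. But the final degree computation misreads the $(2B_1)$ decomposition matrix. The repeated height-one row of $(2B_1)$ is $(\,.\,,1)$, giving degree $\varphi_2(1)$, not $(2,1)$, which would give $2\varphi_1(1)+\varphi_2(1)$ (that is the repeated row of $(2A_2)$). With the correct row, the degrees of the doubled covers of the $(3K)$ row $(.,1,1)$ and of the fused pair from rows $2$, $3$ are both $\psi_2(1)+\psi_3(1)=\varphi_2(1)$, which matches $(2B_1)$ perfectly. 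There is no contradiction.

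In fact a purely numerical argument cannot work here. If you carry out the bookkeeping you started, the block $B$ built from a $(3K)$ block $b$ with fused Brauer characters has \emph{exactly} the $(2B_1)$ decomposition matrix: two copies of $(1,0)$ from row $1$ of $b$, two copies of $(1,1)$ from row $4$, and $1+2(2^{n-3}-1)=2^{n-2}-1$ copies of $(0,1)$ from the fused pair and the invariant height-one characters. Nothing in the character-degree data of $b$ distinguishes this from an honest $(2B_1)$ block, and indeed the paper itself records that the $(2B_1)$ class for defect $4$ genuinely arises as a block of $3\cdot M_{10}$ covering the $(3K)$ block of $3\cdot\Alt(6)$. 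So any correct proof has to use structural input about the specific quasi-simple groups of Lie type that carry a $(3K)$ block, not just the shape of the decomposition matrices. The paper's argument is a long case analysis: it locates all such $b$ via the Bonnaf\'e--Rouquier reduction and the earlier results for types $A$, $D$, $E_6$, $E_7$, determines which outer automorphisms $\sigma$ can stabilise $b$ (distinguishing diagonal, field, and graph automorphisms, and using conjugacy of $\tilde{s}$ with $\tilde{s}^{-1}$ to rule out graph automorphisms for type $A$), and in each case either shows $B$ has dihedral defect groups or computes character degrees against Jordan decomposition to exclude $(2B_1)$. That group-theoretic machinery is essential; the Clifford-theory framework you set up is a sound preliminary but cannot by itself deliver the result.
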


\begin{proof}
If $G.2$ did not stabilise $b$, then $B$ and $b$ would have the same defect groups, so assume that $G.2$ stabilises $b$. Assume also that $G.2$ does not stabilise all the characters of $b$ -- certainly the case if $B$ has semidihedral defect groups. Then $G.2$ defines some $\sigma\in\Out(G)$ of order $2$ that stabilises $b$. 

Let $G$ be defined over $q$, a power of a prime $p\neq2$, and let $\mathbf{G}$ be the corresponding simply connected simple algebraic group, with Steinberg endomorphism $F$ so that $G=\mathbf{G}^F/Z$, where $Z$ is a central subgroup; we may assume that $\abs{Z}$ is a power of $2$. Let $b_{\mathbf{G}^F}$ be the block of $\mathbf{G}^F$ containing $b$, with semisimple label $s\in(\mathbf{G}^*)^F$ according to Lusztig series. Let $\mathbf{L}^*$ be a minimal (not necessarily proper) $F$-stable Levi subgroup of $\mathbf{G}^*$ containing $C_{\mathbf{G}^*}(s)$, so $b_{\mathbf{G}^F}$ is Morita equivalent to a quasi-isolated block $b_{\mathbf{L}^F}$ of $\mathbf{L}^F$ by Theorem \ref{Bonnafe-Rouquier}, and $b$ is Morita equivalent to the block $b_L$ of $L=\mathbf{L}^F/Z$ contained in $b_{\mathbf{L}^F}$ by Theorem \ref{BonnafeRouquiercentral}. 

Suppose that $Z$ is trivial, so $b_{\mathbf{L}^F}$ itself has dihedral defect groups in class $(3K)$. If $b_{[\mathbf{L}^F,\mathbf{L}^F]}$ is a block of $[\mathbf{L}^F,\mathbf{L}^F]\trianglelefteq\mathbf{L}^F$ covered by $b_{\mathbf{L}^F}$, then by Remark \ref{reductionremark} it has trivial or dihedral defect groups. Since $[\mathbf{L}^F,\mathbf{L}^F]$ is simply connected, by \cite[Prop. 12.14]{Malle-Testerman}, and $b_{[\mathbf{L}^F,\mathbf{L}^F]}$ is also quasi-isolated, by Section \ref{proof} its defect groups are not dihedral, and if they were trivial then the defect groups of $b_{\mathbf{L}^F}$ would be abelian. Hence $Z$ must be non-trivial, so $\mathbf{G}$ is of type $A$, $B$, $C$, $D$, or $E_7$.

In case $[\mathbf{L}^F,\mathbf{L}^F]$ is not itself quasi-simple, consider the quasi-isolated blocks of quasi-simple groups with trivial defect groups, described in \cite[Lemma 5.2]{Eaton-Kessar-Kulshammer-Sambale}: there are such blocks of $G_2(q)$, $F_4(q)$, $E_6(q)$, and $E_8(q)$. But $G_2$, $F_4$, and $E_8$ cannot be contained in any proper Levi subgroup of a simple algebraic group, and since $\mathbf{G}\neq E_8$ a Levi subgroup of $\mathbf{G}$ cannot contain both $E_6$ and another simple component. Therefore $[\mathbf{L}^F,\mathbf{L}^F]$ is quasi-simple.

Consider the possible Levi subgroups $\mathbf{L}$ of $\mathbf{G}$. First, $[\mathbf{L}^F,\mathbf{L}^F]$ cannot be of type $B_n(q)$, $C_n(q)$, $D_n(q)$, or $^2\!D_n(q)$, as the only quasi-isolated blocks of these groups are the principal blocks, and the Sylow $2$-subgroups are such that the defect groups of $b_L$ could not be dihedral. This leaves $\mathbf{L}$ of type $A$, $E_6$, or $E_7$.

If $[\mathbf{L}^F,\mathbf{L}^F]=E_7(q)_{sc}$ then $\mathbf{L}^F=\mathbf{G}^F=E_7(q)_{sc}$. Then as in Theorem \ref{E7theorem}, since $b$ is in class $(3K)$ we must have $q\equiv-1\bmod4$, so $q$ is an odd power of $p$. Hence $\sigma$ is not a field automorphism and must be the diagonal automorphism, giving that $G.2=E_7(q)_{ad}$ and $B$ has dihedral defect groups as described in Section \ref{E7}.

If $[\mathbf{L}^F,\mathbf{L}^F]=E_6(\varepsilon q)_{sc}$, where $\varepsilon=\pm1$ and $E_6(-q)$ denotes $^2\!E_6(q)$, then $\mathbf{G}^F=E_7(q)_{sc}$ and $\mathbf{L}^F=E_6(\varepsilon q)_{sc}.(q-\varepsilon)$. Then $\mathbf{L}^F/Z$ has $E_6(\varepsilon q)_{sc}$ as a normal subgroup, and Remark \ref{reductionremark} implies that $b_L$ and the blocks of $E_6(\varepsilon q)_{sc}$ that it covers have the same defect groups and are Morita equivalent. But $E_6(\varepsilon q)_{sc}$ has no quasi-isolated blocks with dihedral defect groups in class $(3K)$, by Section \ref{proof}.

Otherwise, $[\mathbf{L}^F,\mathbf{L}^F]=\SL_m(\varepsilon q^a)$ for some $m\geq2$, $a\geq1$. If $\mathbf{L}^F$ is such that it contains $\GL_m(\varepsilon q^a)$, then since $b_{\mathbf{L}^F}$ is quasi-isolated it is the principal block. Hence $b_L$ is also the principal block of $L=\mathbf{L}^F/Z$, which then has $\PGL_m(\varepsilon q^a)$ as a quotient. For the defect groups of $b_L$ to be dihedral we require $m=2$, but then $b_L$ would have two simple modules, as the principal block of $\PGL_m(\varepsilon q^a)$ does in Theorem \ref{classification}, and not be in class $(3K)$. Hence $\mathbf{L}^F$ is a product of $\SL_m(\varepsilon q^a)$ and a torus. Since $b_{\mathbf{L}^F}$ is quasi-isolated, Theorem \ref{typeAdihedral} implies that $m/2$ is odd and $q^a\equiv-1\bmod4$, so $q$ is an odd power of $p$ and $\sigma$ is not a field automorphism.

Take $T$ to be the torus part, so $\abs{\mathbf{L}^F}=\abs{\SL_m(\varepsilon q^a)}\cdot\abs{T}$. So that the defect groups of $b_L$ are dihedral we must have $\abs{Z}=\abs{Z(\SL_m(\varepsilon q^a))}_2\cdot\ \abs{T}_2=2\cdot\abs{T}_2$. We consider the different possible $G$:

\textit{(i) G of Type B or C}: Carter \cite{Cartercentralisers} describes the maximal-rank subgroups of $\mathbf{G}^F$ for $\mathbf{G}$ classical. If $G$ is of type $B$ or $C$ then $\abs{T}_2\geq2$ by \cite[Props 9 \& 11]{Cartercentralisers}; but $\abs{Z}=2$, so this is not possible. 

\textit{(ii) G of Type A}: If $\mathbf{G}^F=\SL_n(\varepsilon q)$, then by \cite[Props 7 \& 8]{Cartercentralisers} we must have $[\mathbf{L}^F,\mathbf{L}^F]=\SL_m(\varepsilon q^{n/m})$ and $\abs{T}=(q^{n/m}-\varepsilon)/(q-\varepsilon)$ where $n/m$ is odd. Denote $\mathbf{GL}=\GL_n(k)$ and let $B_{GL}$ be a block of $\mathbf{GL}^F$ covering $b_{\mathbf{G}^F}$. Since $b_{\mathbf{L}^F}$ is quasi-isolated, as described in Section \ref{typeA} we have that $C_\mathbf{GL}(\tilde{s})^F\cong\GL_2(\varepsilon q^{n/2})$, where $\tilde{s}$ is a representative of $s$, and the defect groups of $B_{GL}$ are the Sylow $2$-subgroups of $\GL_2(\varepsilon q^{n/2})$. Quotienting by the $2$-part of the centre, consider the block $B_{GL/2}$ of $\GL_n(\varepsilon q)/2$ contained in $B_{GL}$. This group has $\PGL_n(\varepsilon q)$ as a quotient by the odd part of the centre, so $B_{GL/2}$ has defect groups the Sylow $2$-subgroups of $\PGL_2(\varepsilon q^{n/2})$, which are dihedral. If $\sigma$ is a diagonal automorphism then $G.2$ is an odd-index subgroup of $\GL_n(\varepsilon q)/2$, and $B_{GL}$ can be chosen so that $B_{GL/2}$ covers $B$; hence $B$ also has dihedral defect groups. 

We now show that $\sigma$ cannot be a graph automorphism. The graph automorphism of $\mathbf{GL}^F$ maps each conjugacy class to its inverse, so sends any $\chi\in\Irr(\mathbf{GL}^F)$ to its complex conjugate $\overline{\chi}$. By \cite[Lem. 4.2]{Srinivasan-Vinroot}, if $\chi\in\mathcal{E}(\mathbf{GL}^F,\tilde{s})$ then $\overline{\chi}\in\mathcal{E}(\mathbf{GL}^F,\tilde{s}^{-1})$; in particular if $\tilde{s}$ and $\tilde{s}^{-1}$ are not conjugate in $(\mathbf{GL}^*)^F$ then $\chi$ and $\overline{\chi}$ are in different blocks. Note that $\mathbf{GL}\cong\mathbf{GL}^*$ and $\tilde{s}$ is conjugate to $\tilde{s}^{-1}$ in $\mathbf{GL}^F$ if and only if they are conjugate in $\mathbf{GL}$. 

First suppose $F$ is the standard Frobenius endomorphism, so $C_\mathbf{GL}(\tilde{s})^F\cong\GL_2(q^{n/2})$. Then $\tilde{s}$ is $\mathbf{GL}$-conjugate to some $x=\mathrm{diag}(x_1,x_1,x_2,x_2,\dots,x_{n/2},x_{n/2})\in\mathbf{GL}$ where $x_1^q=x_2,\dots,x_{n/2}^q=x_1$, so $x_1^{q^{n/2}-1}=1$. Suppose that $x$ is conjugate to $x^{-1}$, which is equivalent to $\tilde{s}$ and $\tilde{s}^{-1}$ being conjugate in $\mathbf{GL}^F$. Conjugate elements have the same eigenvalues, so the entries of $x^{-1}$ must be those of $x$ permuted; hence $x_1\cdot x_1^{q^d}=x_1^{q^d+1}=1$ for some $d\leq n/2$. Let $r$ be a prime dividing the order of $x_1$, hence also dividing both $q^d+1$ and $q^{n/2}-1$; note that $r\neq2$ since the order of $\tilde{s}$ is odd. Then $q^d\equiv-1\bmod r$, so the order of $q\bmod r$ is even. But $q^{n/2}\equiv1\bmod r$ and $n/2$ is odd, a contradiction.

Now suppose $F$ is the twisted Steinberg endomorphism sending $(a_{ij})$ to $(a_{ij}^q)^{-T}$, so $\mathbf{GL}^F=\GU_n(q)$ and $C_\mathbf{GL}(\tilde{s})^F\cong\GU_2(q^{n/2})$. Then similarly $\tilde{s}$ is conjugate to $x=\mathrm{diag}(x_1,x_1,x_2,x_2,\dots,x_{n/2},x_{n/2})\in\mathbf{GL}$ and $x_1^{-q}=x_2,\dots,x_{n/2}^{-q}=x_1$, so, since $n/2$ is odd, $x^{q^{n/2}+1}=1$. Supposing $x$ is conjugate to $x^{-1}$ implies that $x_1^{(-q)^d+1}=1$ for some $d\leq n/2$. Then for a prime $r\neq2$ dividing the order of $x_1$ we have that $q^{n/2}\equiv-1\bmod r$, so the order of $q\bmod r$ is divisible by $2$ but not $4$. But also $(-q)^d\equiv-1\bmod r$, so if $d$ is odd then the order of $q\bmod r$ is odd, and if $d$ is even then the order is divisible by $4$, a contradiction in either case.

Therefore $\tilde{s}$ is not conjugate to $\tilde{s}^{-1}$, so $\chi$ and $\overline{\chi}$ are in different blocks; hence the graph automorphism of $\mathbf{GL}^F$ does not stabilise $B_{GL}$. Therefore the graph automorphism of $G$ does not stabilise $b$, nor does the product of the diagonal and graph automorphisms.

\textit{(iii) G of Type D}: Since $\abs{Z}\leq4$ we must have $\abs{T}_2\leq2$, so by \cite[Prop. 10]{Cartercentralisers} we have that $\mathbf{G}^F$ cannot be twisted, $[\mathbf{L}^F,\mathbf{L}^F]=\SL_m(q^{n/m})$ and $\abs{T}=q^{n/m}-1$ where $n/m$ is odd; note that $m$ and $n$ are even. The $\overline{\sigma}\in\Out(\mathbf{G}^F)$ corresponding to $\sigma\in\Out(G)$ stabilises $b_{\mathbf{G}^F}$, so by \cite[Prop. 4.9]{Ruhstorfer} there is some automorphism in $\Aut(\mathbf{G}^F)$ in the coset $\overline{\sigma}$ that stabilises $\mathbf{L}^F$. Considering the Dynkin diagram of $\mathbf{G}$, the Levi subgroup $\mathbf{L}$ contains the node of one leg but not the other. The graph automorphism of $\mathbf{G}$ swaps these two nodes, and since $n$ is even the two $A_{n-1}$ Levi subgroups are not conjugate, so it does not stabilise $\mathbf{L}^F$; hence $\sigma$ is not the graph automorphism. Nor is it the product of the diagonal and graph automorphisms, as any diagonal automorphism does stabilise $\mathbf{L}^F$. Then $\sigma$ is a diagonal automorphism, so there is a simple algebraic group $\mathbf{H}$ (a special orthogonal or half-spin group) with a central subgroup of order $2$ such that $\mathbf{H}^F/2=G.2$. Then Theorem \ref{BonnafeRouquiercentral} implies that $B$ is Morita equivalent to a quasi-isolated block of a Levi subgroup; since groups of type $D$ have no non-principal quasi-isolated blocks, this must be of type $A$. But no groups of type $A$ have quasi-isolated blocks with semidihedral defect groups in class $(2B_1)$; the quasi-isolated blocks of $\PSL_n(q)$ and $\SL_n(q)$ are given in Section \ref{typeA} and $\PGL_n(q)$ and $\GL_n(q)$ have no non-principal quasi-isolated blocks. Hence $B$ is not in class $(2B_1)$.

\textit{(iv) G of Type $E_7$}: If $\mathbf{G}^F=E_7(q)_{sc}$ then $\sigma$ must be the diagonal automorphism and $G.2=E_7(q)_{ad}$. Then if $B$ is not quasi-isolated it is Morita equivalent to a quasi-isolated block of a Levi subgroup of $G.2=(\mathbf{G}^*)^F$, and as above if this is of type $A$ or $D$ then $B$ is not in semidihedral class $(2B_1)$. If the Levi subgroup is of type $E_6$ then it is $E_6(\varepsilon q).(q-\varepsilon)$, containing a torus part divisible by $2$, so the defect groups of $B$ are a direct product and not semidihedral. 

If $B$ is unipotent then by Section \ref{E7} it does not have semidihedral defect groups. Otherwise if $B$ is quasi-isolated, then $C_\mathbf{G}(s^\prime)^F=\SL_6(q).\SL_3(q)$ or $\SU_6(q).\SU_3(q)$, by \cite[Table III]{Bonnafe}, where $s^\prime\in E_7(q)_{sc}$ is the semisimple label for $B$. Also $B$ must contain some character in $\mathcal{E}(E_7(q)_{ad},s^\prime)$, corresponding to a unipotent character $\chi_u$ of $C_\mathbf{G}(s^\prime)^F$, with degree $\chi_u(1)\cdot\abs{E_7(q)_{sc}:C_\mathbf{G}(s^\prime)^F}_{p^\prime}$ by Jordan decomposition in Theorem \ref{JordanDecomposition}. 

On the other hand, $[\mathbf{L}^F,\mathbf{L}^F]=\SL_m(\varepsilon q^a)$ and $\abs{T}$ is odd since $\abs{Z}=2$. As described in Section \ref{typeA}, in any case we have $(C_{\mathbf{G}^*}(s)^\circ)^F=\PGL_2(q^{m/2}).S$ for some torus $S$ of odd order. From the possible connected centralisers of $E_7$ listed in \cite{Deriziotis}, we must have $m/2=3$ and $\abs{S}=\Phi_3^2$, $\Phi_6^2$, or $\Phi_{12}$, where $\Phi_d$ denotes the $d$th cyclotomic polynomial evaluated at $q$. Then as in Section \ref{typeA} we have that $b_{\mathbf{L}^F}$, and hence $b_{\mathbf{G}^F}$, is Morita equivalent to the principal block of $\SL_2(q^3)$, and the ordinary character degrees of $b_{\mathbf{G}^F}$ are those of the principal block of $\SL_2(q^3)$ multiplied by $\abs{E_7(q)_{ad}:C_{\mathbf{G}^*}(s)^F}_{p^\prime}$ according to Jordan decomposition. Then the characters of $b$ correspond to those of the principal block of $\PSL_2(q^3)$, whose ordinary character degrees are $1$, $(q^3-1)/2$, $(q^3-1)/2$, $q^3$, and $q^3-1$ repeated. Hence the character degrees of $B$ are among these (and also $2(q^3-1)$ if they fuse on induction to $G.2$) multiplied by the factor of $\abs{E_7(q)_{ad}:C_{\mathbf{G}^*}(s)^F}_{p^\prime}$. But the degree of each of the unipotent characters of $C_\mathbf{G}(s^\prime)^F$ is too small.

Indeed, we must have $\chi_u(1)\cdot\abs{E_7(q)_{sc}:C_\mathbf{G}(s^\prime)^F}_{p^\prime}=\chi(1)\cdot\abs{E_7(q)_{ad}:C_{\mathbf{G}^*}(s)^F}_{p^\prime}$, for some $\chi(1)=1$, $(q^3-1)/2$, $q^3$, $q^3-1$, or $2(q^3-1)$. As above, $C_{\mathbf{G}^*}(s)^F=\PGL_2(q^3).S.A(s)$, where $A(s)=C_{\mathbf{G}^*}(s)/C_{\mathbf{G}^*}(s)^\circ$; note that the order of $s$, and hence $A(s)$, must be odd. First letting $C_\mathbf{G}(s^\prime)^F=\SL_6(q).\SL_3(q)$, the orders of these groups give $\chi_u(1)\cdot\abs{S}\cdot A(s)=\chi(1)\cdot\Phi_1^6\Phi_2^3\Phi_3^2\Phi_4\Phi_5$. Considering the $2$-part of the equation gives $\abs{\chi_u(1)}_2=2^7\cdot\abs{\chi(1)\cdot\Phi_2^3}_2$. Note $\chi_u$ is a product of unipotent characters $\chi_5$ and $\chi_2$ of $\SL_6(q)$ and $\SL_3(q)$ respectively. Calculating these degrees using \cite[Section 13.8]{Carter} gives $\abs{\chi_2(1)}_2\leq\abs{\Phi_2}_2$, and the only possible $\chi_5$ with $2$-part greater than $2$ has degree $q^4\Phi_2^3\Phi_4\Phi_6$. But the power of $q$ on the right-hand side of the above equation is at most $q^3$, so this is not possible.

Similarly for $C_\mathbf{G}(s^\prime)^F=\SU_6(q).\SU_3(q)$ we get $\chi_u(1)\cdot\abs{S}\cdot A(s)=\chi(1)\cdot\Phi_1^3\Phi_2^6\Phi_4\Phi_6^2\Phi_{10}$ and the same argument applies. Hence $B$ is not quasi-isolated.

Therefore $B$ does not have semidihedral defect groups in class $(2B_1)$.
\end{proof}

\section{References}

\printbibliography[heading=none]

\noindent Norman Macgregor\\University of Birmingham\\\textit{E-mail address}: \href{mailto:nxm835@bham.ac.uk}{nxm835@bham.ac.uk}

\end{document}